\newtheorem{proposition}{Proposition}
\newtheorem*{remark}{Remark}
\newtheorem{theorem}{Theorem}
\newtheorem{corollary}{Corollary}
\newtheorem{lemma}{Lemma}
\newtheorem*{theoremA}{Theorem A}
\newtheorem*{theoremB}{Theorem B}
\newtheorem*{theoremC}{Theorem C}
\newcommand{\<}{\left\langle}
\newcommand{\rr}{\right\rangle}
\newcommand{\ode}{\overline\nabla}
\newcommand{\ws}{\widetilde{s}}
\newcommand{\wgamma}{\widetilde{\gamma}(\cdot,\:\widetilde{t})}
\begin{document}

\title{The curve shortening flow with density of a spherical curve in codimension two}

\author{ Francisco Viñado-Lereu
\thanks{
ORCID ID: 0000-0002-5828-6756. \newline Research partially supported by Universitat Jaume I research proyect UJI-B2018-35 and by MINECO research proyect MTM2017-84851-C2-2-P. The author has been supported by a postdoctoral grant from \textit{Plan de promoción de la investigación de la Universitat Jaume I del año 2018 Acción 3.2.} POSDOC-A/2018/32 - grupo 041.
}
}

\date{}

\maketitle

\vspace{-1cm}
\begin{abstract}
In the present paper we carry out a systematic study about the flow of a spherical curve by the mean curvature flow with density in a  3-dimensional rotationally symmetric space with density $(M^3_w,\:g_w,\:\xi)$ where the density $\xi$ decomposes as sum of a radial part $\varphi$ and an angular part $\psi$. We analyse how either the parabolicity or the hyperbolicity of $(M^3_w,\:g_w)$ condition the behaviour of the flow when the solution goes to infinity.
\end{abstract}

{\bf Keywords} Mean curvature flow - Manifolds with density

{\bf Mathematics Subject Classification} 53C44 - 35R01

\section{Introduction}

A n+1-dimensional \textit{manifold with density} $(M,\:g,\:\xi)$ is a Riemannian manifold $(M,\:g)$ and a function $\xi:M\rightarrow \mathbb{R}$. In this type of manifold we may calculate the \textit{weight volume} or \textit{volume with density} of the k-dimensional immersed submanifolds $\iota:P^k\rightarrow M^{n+1}$ as:
\begin{align}
V_\xi(P):=\int_P\:e^{\xi\circ\iota}\:dv_{g_P},
\end{align}
where $g_P\equiv\iota^\star g$ is the induced metric over the manifold $P$ by the immersion $\iota$. We shall denote by $dv_{\xi,\:P}$ or $dv_{\xi}$ to $ e^\xi\:dv_{g_P}$ the volume element associated to a density.

In this context we have a natural generalization to the mean curvature vector of a submanifold as the negative $L^2$-gradient of the k-dimensional functional of volume with density. We shall call to this vector field \textit{mean curvature vector with density} and it shall be denoted by $\vec{H}_\xi$. It has the form:
\begin{align}
\vec{H}_\xi:= \vec{H}-\big(\nabla^M\xi\big)^{\perp},
\end{align}
where $\vec{H}$ is the mean curvature vector of the submanifold and $\big(\nabla^M\xi\big)^{\perp}$ is the orthogonal projection of $\nabla^M\xi$, gradient of $\xi$ in $(M,\:g_M)$, to the submanifold. In the particular case where $k=n=1$, which is the case of curves on a surface, we shall change $\vec{H}$ by $\vec{k}$ the geodesic curvature vector and we shall denote by $\vec{k}_\xi$ the new vector field, we shall call this vector field the \textit{geodesic curvature vector with density}.

This fact motivate us to study the following flow:
\begin{align}
\left\lbrace \begin{array}{ccc}
\dfrac{\partial F}{\partial t}(p,\:t)& = &\Big( \vec{H}_\xi\Big)_{F(p,\:t)}, \\
F(p,\:0)& =&F_0(p),\end{array} \right.
\end{align}
where $F_0:P^k\rightarrow M^{n+1}$ is a k-dimensional immersed submanifold, as analogous flow to the mean curvature flow in the context of the geometry with density. This flow is called \textit{the mean curve flow with density} ($\xi$MCF for short). In the particular case where $k=1$, case of curves, this problem is also called \textit{the curve shortening problem with density}. Some works performed in this context are \cite{ScSm02,BoMi10,MiVi16,MiVi18}; let us remark that these authors did not necessarily use this name for the flow. Other authors had indirectly explored this problem to study the mean curvature flow of submanifolds with some symmetries \cite{Hu90,Sm96}. All these works were done for hypersurfaces ($k=n$).

Given a n-dimensional immersed submanifold $\iota:P^n\rightarrow M^{n+1}$ with $n\geq 2$ we shall define the \textit{mean curvature with density} as:
\begin{align*}
H_\xi:=H-g_{M}(\nabla^M\xi,\:N),
\end{align*}
where $N$ is a unit normal field to the hypersurface and we consider the following convention signs:
$ AX=-\nabla^M_X N,\:H=trA=\sum_{i=1}^n g_{M}(e_i,\:Ae_i), $
with $A$ as the Weingarten map and $\lbrace e _i\rbrace_{i=1}^n$ an orthonormal frame of the hypersurface. If $n=1$ we define  the \textit{geodesic curvature with density} as:
\begin{align*}
k_\xi:=k-g_{M}(\nabla^M\xi,\:N).
\end{align*}
Given an immersion such that $H_\xi=0$ or $k_\xi=0$, depending of the dimension, we shall call to this immersion \textit{$\xi$-minimal}.

In the present paper we work on the manifold with density $(M^3_w,\:g_w,\:\xi)$ that we describe below. Let $(M^3_w,\:g_w)$ be a 3-dimensional smooth rotationally symmetric space:
\begin{align}\label{DefVaAmbiRota}
M^3_w&\equiv[0,\:\infty)\times\mathbb{S}^2,
\nonumber\\
g_w&\equiv \pi^\star dr^2+(w\circ\pi)^2\sigma^\star g_{\mathbb{S}^2},
\end{align}
where $w:[0,\:\infty)\rightarrow \mathbb{R},\:w=w(r),$ is a smooth map such that $w\vert_{(0,\:\infty)}>0$ and $w(0)=0$, $g_{\mathbb{S}^2}$ is the metric over the 2-sphere with Gauss curvature equal to one and  $\pi:M^3_w\rightarrow[0,\:\infty)$, $\sigma:M^3_w\rightarrow \mathbb{S}^2$ are the natural projections. As the manifold is smooth, we may prove that the function $w$ satisfies:
\begin{align*}
w'(0)=1,
\end{align*}
also $o\equiv \lbrace 0\rbrace\times\mathbb{S}^2$ is a pole for the Riemannian manifold $(M^3_w,\:g_w)$. Regarding the density, let $\xi:M^3_w-\lbrace o\rbrace\rightarrow\mathbb{R}$ be a smooth application such that:
\begin{align}\label{ConDen}
\xi(x)=\varphi\circ\pi(x)+\psi\circ\sigma(x),
\end{align}
with $\varphi\in C^\infty((0,\:\infty))$ and $\psi\in C^\infty(\mathbb{S}^2)$. We note that $\xi$ is not defined in the pole $o$.

In this manifold we are able to study the evolution of a closed smooth embedded spherical curve by the mean curvature flow with density. We shall denote by $\mathcal{A}$ to this set of curves, that it is given exactly by
\begin{align*}
\mathcal{A}:=\lbrace \gamma:\mathbb{S}^1\rightarrow M^3_w\:\vert\:\gamma\:\text{is\:a\: smooth\:embedded\:curve\:such\:that\:}\pi(\text{Im}\:\gamma)=\lbrace r\rbrace\subseteq (0,\infty) \rbrace
\end{align*}

Actually, the aim of this paper is to make a systematic study of the following problem:

\textit{
If we consider the Riemannian manifold with density $(M^3_w,\:g_w,\:\xi)$ given by (\ref{DefVaAmbiRota}) and (\ref{ConDen}) then, in this space, we may study the following initial value problem:
\begin{align}\label{PVI}
\left\lbrace \begin{array}{ccc}
\dfrac{\partial}{\partial t}\gamma(p,\:t) & = & (\overrightarrow{H}_{\xi})_{\gamma(p,\:t)}, \\
\gamma(\cdot,\:0)& =&\gamma_0\in\mathcal{A},\end{array} \right.
\end{align}
where $(\overrightarrow{H}_{\xi})_{\gamma(\cdot,\:t)}$ denotes the mean curvature vector with density of the curve $\gamma(\cdot,\:t)$.
}

Other previous works in the field about curve shortening problem in codimension greater than or equal to two are \cite{LiDez07,Zho17}. In these two works the ambient manifold is compact and the authors assume some restrictions on the initial curve to guarantee that the maximal time of the solution is infinite. In \cite{LiDez07} the initial curve is a ramp and in \cite{Zho17} the initial curve is a graphical curve. Also in \cite{LiDez07} we can find some more general results.

Regarding the $\xi$MCF, we note that most of the results in the literature about $\xi$MCF concern radial densities in the Euclidean space \cite{ScSm02,BoMi10,MiVi16}, thus $(M^3_w,\:g_w,\:\xi)$ is a good manifold to keep increasing the understanding of this type of problems. On the other hand, the election of $\mathcal{A}$ as set of initial condition for the problem (\ref{PVI}) is motived because this family of curves is good enough to guarantee that if the solution has finite maximal time then the curve collapses to a point, moreover, the property of being embedded is preserved throughout the flow. In general, when we study the evolution of a curve in codimension greater than or equal to two, the properties above described are false. An interesting problem would be to look for other families of curves with these good properties in codimension greater than or equal to two.

It is equally important to remark that these types of flows (\ref{PVI}) had been indirectly and partially studied in the 3-dimensional Euclidean space without density ($\xi=0$). This situation was studied to understand the mean curvature flow of lagrangian spherical surfaces in $\mathbb{C}^2$ in \cite{CasLerMi18}. This fact is motivated by the link between the mean curvature flow and the $\xi$MCF, which is explicitly detalied in the article \cite{MiVi18}.

In our situation we may look the geodesic spheres 
\begin{align*}
S_r:=\lbrace p\in M^3_w\vert\: \pi(p)=r\rbrace
\end{align*} as Riemannian manifolds with density $(S_r,\:g_{S_r},\:\psi)$ with $g_{S_r}\equiv w^2(r)\: g_{\mathbb{S}^2}$ and $\psi\equiv\psi\circ\sigma$,  with a slight overuse in notation about the density. Besides, given $\gamma\in\mathcal{A}$ we denote by $\widetilde\gamma$ to the curve as curve in $S_{r}$.

With the selection of a spherical curve as initial condition, it becomes natural to our mind the following problem:

Let $\gamma_0\in\mathcal{A}$ and let $\widetilde{\gamma}:\mathbb{S}^1\times[0,\:\widetilde{T})\rightarrow S_{r_0},\:\widetilde{\gamma}=\widetilde{\gamma}(p,\:\widetilde{t}),$ be a smooth application such that:
\begin{align}\label{PVIesfera}
\left\lbrace \begin{array}{ccc}
\dfrac{\partial}{\partial\widetilde{t}}\widetilde\gamma(p,\:\widetilde{t}) & = & (\overrightarrow{k}_{S_{r_0},\:\psi})_{\widetilde{\gamma}(p,\:\widetilde{t})}, \\
\widetilde\gamma(\cdot,\:0)& =&\widetilde\gamma_0,\end{array} \right.
\end{align}
where $\overrightarrow{k}_{S_{r_0},\:\psi}$ denotes the geodesic curvature vector with density of the curve in the Riemannian manifold $(S_{r_0},\:g_{S_{r_0}},\:\psi)$.
This problem is included in the theory \cite{An90,An91,Oak94}, as well as the behaviour when the curve collapses to a point is included in the work \cite{Zhu98} and the behaviour when the solution exists for all time is in \cite{MiVi16}.

In the present work, we study in which way could we use the known results of (\ref{PVIesfera}) to understand the solution of (\ref{PVI}). On the other hand, we also note that the problem (\ref{PVI}) is a generalization of the problem (\ref{PVIesfera}).

In \cite{MiVi16} it was shown that in the study of the $\zeta$MCF of a curve in the Euclidean plane $\mathbb{R}^2$ with a radial density $\zeta=\zeta(r)$, a very relevant fact is that the sign and the zeros of the function $r\mapsto\dfrac{1}{r}+\zeta'(r)$ condition the dynamic of the solution. Analogously, in our situation the dynamic of the solution is influenced by the function:
\begin{align}
\vec{B}:M^3_w&\longrightarrow TM^3_w
\nonumber\\
p&\longmapsto \vec{B}(p):=\Big(-\dfrac{w'(\pi(p))}{w(\pi(p))}-\varphi'(\pi(p))\Big)\partial_r\vert_p,
\end{align}
whose scalar version is
\begin{align}\label{DefBEs}
B:[0,\:\infty)&\longrightarrow \mathbb{R}
\nonumber\\
r&\longmapsto B(r):=\dfrac{w'(r)}{w(r)}+\varphi'(r),
\end{align}
that is $\vec{B}(p)=-B(\pi(p))\partial_r\vert_p$. This scalar version generalizes the function given in \cite{MiVi16}. 

We note that the function (\ref{DefBEs}) has the following interpretation: Let $(\mathbb{R}^2,\:g_{w},\:\varphi)$ be the Riemannian manifold where $g_{w}:=dr^2+w^2(r)\: g_{\mathbb{S}^1}$ and $\varphi=\varphi\circ\pi$ is the density that appears in (\ref{ConDen}). Then, $B(r)$ is the geodesic curvature with density $\varphi$ of the $C_r$ circle centered at the origin whose radius is $r$, if we consider $-\partial_r$ as unit normal vector to $C_r$. Therefore, if $B(r)=0$, this means that the circle $C_r$ is $\varphi$-minimal. For this reason,  given a geodesic sphere $S_r$ such that $B(r)=0$ we shall say that it is \textit{$B$-minimal}.

Once proved the existence and unicity of solution for the problem (\ref{PVI}), we get that, if the maximal time of the solution of (\ref{PVI}) is finite, then:

\begin{theoremA}
Let $\gamma_0\in\mathcal{A}$ and let $\gamma:\mathbb{S}^1\times[0,\:T)\rightarrow M^3_w$ be the unique maximal solution for the initial value problem (\ref{PVI}) with $\gamma_0$ as initial condition, whose maximal time $T$ is finite, then, the curve collapses to a point $p\in M^3_w$ and
\begin{itemize}
\item[i)]  If $p\neq o$ the curve collapses to a spherical round point in the geodesic sphere $S_{\pi(p)}$.
\item[ii)] If $p=o$ and there exists $\widetilde{\varphi}\in C^1([0,\:\infty))$ such that $\widetilde\varphi\vert_{(0,\:\infty)}=\varphi$ then, a blow-up centered at this point gives a limit flow by the $\psi$MCF in $(S_{r_0},\:g_{S_{r_0}},\:\psi)$ that $C^\infty$-subconverges, after a reparametrization of the curves, to a closed $\psi$-minimal curve.  
\end{itemize}
\end{theoremA}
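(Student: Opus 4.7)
The plan is to exploit a decomposition of $\vec H_\xi$ for spherical curves so that the ambient flow uncouples into a scalar radial ODE and an angular $\psi$MCF on a fixed sphere, after which the two cases of the theorem are handled by separate asymptotic analyses of the angular evolution.

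I first compute $\vec H_\xi$ for any $\gamma(\cdot,t)\in\mathcal{A}$ lying on $S_r$. From the warped-product structure and the choice $N=\partial_r$, the second fundamental form of $S_r$ in $(M^3_w,g_w)$ equals $-(w'(r)/w(r))\,g_{S_r}$, so the curvature vector splits as $\vec k = \vec k_{S_r} - (w'(r)/w(r))\partial_r$, while $\nabla^M\xi=\varphi'(r)\partial_r+\nabla^{S_r}\psi$. Subtracting the normal projection yields the key identity
$$\vec H_\xi \;=\; \vec k_{S_r,\psi} \;+\; \vec B,$$
whose first summand is tangent to $S_r$ and whose second is radial and uniform along the curve. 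Hence $\mathcal{A}$ is preserved by the flow, the radial coordinate satisfies the scalar ODE $r'(t)=-B(r(t))$, and the angular shape evolves by $\vec k_{S_{r(t)},\psi}$. Using the conformal identity $g_{S_{r(t)}}=(w(r(t))/w(r_0))^2\, g_{S_{r_0}}$ and the associated scaling $\vec k_{S_{r(t)},\psi}=(w(r_0)/w(r(t)))^2\,\vec k_{S_{r_0},\psi}$, the time reparametrization $\widetilde t(t):=\int_0^t(w(r_0)/w(r(s)))^2\,ds$ turns the angular evolution into precisely the autonomous $\psi$MCF (\ref{PVIesfera}) on $(S_{r_0},g_{S_{r_0}},\psi)$.

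With this reduction, $T<\infty$ forces either $r(t)\to r_\infty>0$ with $\widetilde t\to \widetilde T<\infty$ (a singularity of the angular $\psi$MCF), or $r(t)\to 0$ (ambient collapse to the pole); escape of $r$ to infinity in finite time along the radial ODE is incompatible with the spherical flow reaching a singularity at $T$. In case (i), $w(r(\cdot))$ remains bounded above and below, so the angular flow reaches a singularity at $\widetilde T<\infty$; applying the asymptotic analysis of \cite{Zhu98} for the $\psi$MCF of embedded closed curves on surfaces, the angular curve $\widetilde\gamma(\cdot,\widetilde t)$ contracts to a round point $q\in S_{r_0}$. Identifying $S_{r_0}$ with $S_{r_\infty}$ via $\sigma$, this gives the collapse of $\gamma(\cdot,t)$ to a spherical round point on $S_{r_\infty}$, proving (i).

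For case (ii), $r(t)\to 0$; the extension $\widetilde\varphi\in C^1([0,\infty))$ keeps $\varphi'$ bounded near the pole, and the smoothness of $M^3_w$ at $o$ forces $w(r)\sim r$, so $B(r)\sim 1/r$, the radial ODE yields $r(t)^2\sim 2(T-t)$, and $\widetilde t\to\infty$. The reparametrized angular flow is thus an eternal $\psi$MCF on $(S_{r_0},g_{S_{r_0}},\psi)$. The monotonicity
$$\frac{d}{d\widetilde t}L_\psi(\widetilde\gamma(\cdot,\widetilde t)) \;=\; -\int_{\widetilde\gamma(\cdot,\widetilde t)}|k_\psi|^2\,ds_\psi$$
together with $L_\psi\ge 0$ produces a sequence $\widetilde t_i\to\infty$ along which $\int_{\widetilde\gamma(\cdot,\widetilde t_i)}|k_\psi|^2\,ds_\psi\to 0$. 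The main technical obstacle is to promote this into smooth subconvergence of the reparametrized curves to a closed $\psi$-minimal limit: one must prevent collapse of the limit (a uniform lower bound on $L_\psi$) and establish uniform $C^\infty$-bounds via parabolic regularity combined with the preservation of embeddedness under the $\psi$MCF, leveraging the estimates developed in \cite{MiVi16}; once these are secured, Arzel\`a--Ascoli yields the desired subsequential limit, which is closed and $\psi$-minimal as required.
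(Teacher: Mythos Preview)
Your proposal is correct and follows essentially the same route as the paper: decompose $\vec H_\xi = \vec k_{S_r,\psi} + \vec B$, reduce to a radial ODE plus the angular $\psi$MCF on $S_{r_0}$ via the time change $\widetilde t$, apply Zhu's result for case (i), and for case (ii) show $\widetilde T = \infty$ before invoking the subconvergence theorem of \cite{MiVi16}. The only minor difference is the mechanism for $\widetilde T = \infty$ in case (ii): you use the direct ODE asymptotic $r(t)^2 \sim 2(T-t)$ coming from $B(r)\sim 1/r$, whereas the paper reaches the same conclusion via L'H\^opital-based integral comparisons showing $\widetilde T \sim \int_0^{r_0} dx/x$.
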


The following important goal is to study the behaviour of the flow when the solution exists for all time:

\begin{theoremB}
Let $\gamma_0\in\mathcal{A}$ and let  $\gamma:\mathbb{S}^1\times[0,\:T)\rightarrow M^3_w$ be the unique maximal solution of the initial value problem (\ref{PVI}) with $\gamma_0$ as initial condition. Then, if the solution exists for all $t$, that is, $T=\infty$, it is bounded and it does not go to the pole $o$, thus the flow $C^\infty$-subconverges, after a reparametrization of the curves $\gamma(\cdot,\:t)$, to a closed $\psi$-minimal spherical curve contained in the $B$-minimal geodesic sphere $S_{\lim_{t\rightarrow\infty} \pi(\text{Im}\:\gamma(\cdot,\:t))}$.
\end{theoremB}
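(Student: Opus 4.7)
The plan is to reduce the evolution to the combination of a scalar autonomous ODE for the radius and a $\psi$MCF on a sphere. For a spherical curve on $S_r$, the Gauss formula together with $\nabla^M_X\partial_r=(w'(r)/w(r))X$ for $X$ tangent to $S_r$, and the density splitting $\xi=\varphi+\psi$, yield the orthogonal decomposition
\[
\vec{H}_\xi=\vec{k}_{S_r,\psi}+\vec{B},
\]
where $\vec{k}_{S_r,\psi}$ is tangent to $S_r$ and $\vec{B}=-B(r)\partial_r$ is radial and depends only on $r$. Hence every point of the curve moves radially at the common rate $-B(r(t))$, the class $\mathcal{A}$ is preserved, the radius $r(t):=\pi(\text{Im}\,\gamma(\cdot,t))$ satisfies the autonomous ODE $\dot r=-B(r)$, and the tangential part evolves by $\psi$MCF on the time-dependent sphere $S_{r(t)}$.

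Because $\dot r=-B(r)$ is scalar and autonomous, $r(t)$ is monotone and admits a limit $r_\infty\in[0,\infty]$; the work is to exclude the two extreme cases. If $r_\infty=0$, the smoothness of $(M^3_w,g_w)$ at the pole forces $w'/w\sim 1/r$, so $B(r)\to+\infty$ and $\int_0^{r(0)}dr/B(r)<\infty$: $r$ would reach $0$ in finite time and the curve would collapse at the pole via the mechanism of Theorem A(ii), contradicting $T=\infty$. If $r_\infty=\infty$, either $B$ has a zero $r^*>r(0)$ (in which case $S_{r^*}$ is an ODE barrier forbidding it), or $B<0$ on $[r(0),\infty)$; in this latter case one uses the gradient-flow identity
\[
\frac{d}{dt}L_\xi(\gamma(\cdot,t))=-\int_\gamma|\vec{H}_\xi|^2\,dv_\xi,
\]
the orthogonal decomposition $|\vec{H}_\xi|^2=B(r)^2+|\vec{k}_{S_r,\psi}|^2$, and the factorisation $L_\xi=w(r)e^{\varphi(r)}L_\psi(\widetilde\gamma)$, combined with a positive lower bound on the intrinsic weighted length $L_\psi(\widetilde\gamma)$ coming from non-collapsing, to force $\int_0^\infty B(r(t))^2\,dt<\infty$, which is then shown to be incompatible with $r(t)\to\infty$. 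Consequently $r_\infty\in(0,\infty)$ and, since $\dot r(t)\to 0$, $B(r_\infty)=0$, so $S_{r_\infty}$ is $B$-minimal.

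With $r(t)\to r_\infty$, the tangential motion converges to the $\psi$MCF on the fixed sphere $S_{r_\infty}$ up to a decaying perturbation. The integrability $\int_0^\infty\int|\vec{H}_\xi|^2\,dv_\xi\,dt<\infty$ produces a sequence $t_n\to\infty$ along which the curves have $L^2$-small weighted geodesic curvature. Combined with uniform $C^k$ bounds obtained from the standard evolution equations for the weighted curvature on a compact surface and Bernstein-type derivative estimates (absorbing the vanishing radial drift into the error term), Arzel\`a--Ascoli and a diagonal argument deliver a reparametrized $C^\infty$-subsequential limit $\gamma_\infty\subset S_{r_\infty}$ that satisfies $\vec{k}_{S_{r_\infty},\psi}=0$, i.e., is a closed $\psi$-minimal spherical curve, as desired.

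The main obstacle is the exclusion of $r_\infty=\infty$ when $B$ has no zero above $r(0)$: the ODE alone permits $r(t)\to\infty$, and ruling it out requires combining the gradient-flow structure with asymptotic information on $w$ and $\varphi$, which is precisely where the parabolic/hyperbolic dichotomy mentioned in the abstract enters. A secondary technical point is the uniform higher-order curvature control under the time-dependent ambient metric; this is handled by comparison with the model flow on the $B$-minimal sphere $S_{r_\infty}$ and a bootstrap of the standard estimates.
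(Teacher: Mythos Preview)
You have misread the logical structure of Theorem~B. The clauses ``it is bounded'' and ``it does not go to the pole $o$'' are \emph{hypotheses}, not conclusions; the theorem asserts only the subconvergence under the assumption $0<C_1\le R(t)\le C_2$. The paper makes this explicit immediately after the statement: if $\varphi$ has a $C^1$-extension to $0$ one can drop the pole condition (this is Proposition~\ref{PropoColapOrigenConExtenTF}), and Theorem~C is devoted entirely to the case $R(t)\to\infty$. Both extreme scenarios you attempt to rule out actually occur. Your argument that $r_\infty=0$ forces $T<\infty$ forgets the $\varphi'$ contribution in $B=w'/w+\varphi'$; the paper gives the counterexample $\varphi(r)=-\ln r+\tfrac12 r^2$ in $\mathbb{R}^3$, for which $B(r)=r$ and $R(t)=r_0e^{-t}\to 0$ with $T=\infty$. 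Your argument against $r_\infty=\infty$ via the $L_\xi$-gradient identity is similarly untenable: in the Gaussian example of Section~6 with $r_0>1/\mu$ the flow exists for all time and $R(t)\to\infty$.

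For the genuine content of Theorem~B (subconvergence when $R(t)$ stays in a compact interval of $(0,\infty)$), your outline is roughly in the right spirit but misses the key structural simplification the paper uses. Rather than treating the tangential motion as a $\psi$MCF on a time-varying sphere with a ``decaying perturbation'' and invoking Bernstein-type estimates directly, the paper introduces the time change $\widetilde t(t)=\int_0^t(w(r_0)/w(R(s)))^2\,ds$ under which the projected curve $\widetilde\gamma(\cdot,\widetilde t)$ solves the $\psi$MCF \emph{exactly} on the fixed sphere $S_{r_0}$ (Theorem~\ref{TeEyU}). The bound on $R(t)$ gives $\widetilde T=\infty$, so Theorem~\ref{TViMi} (the cited convergence result for $\psi$MCF on a compact surface) applies immediately to $\widetilde\gamma$. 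The remaining work is to transfer the $C^\infty$ bounds back to $\gamma$ via the scaling relations of Lemma~\ref{LeDeCur}, an explicit polynomial expression for $\partial_s^n\gamma$ in the Frenet-type frame, a uniform length bound from the monotonicity of $L_\xi$, and Arzel\`a--Ascoli. This avoids the need to develop new estimates under a time-dependent ambient metric.
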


In the particular case where $\widetilde{\varphi}\in C^1([0,\:\infty))$, such that $\widetilde\varphi\vert_{(0,\:\infty)}=\varphi$, exists we may eliminate the third condition: the flow does not go to the pole $o$. However, generally, if the function $\varphi$ does not have a $C^1$-extension to the pole $o$, it is possible that the flow collapses to the pole for $T=\infty$.

On the other hand, the $B$-minimal geodesic spheres are barriers for the flow. Thus, given a spherical curve between two  $B$-minimal geodesic spheres then, the flow is contained between these two spheres. Taking this into account, it is not difficult to find situations in which the solution is bounded.

We note that in the previous works \cite{LiDez07,Zho17} the ambient manifold is compact so the solution always is bounded. However, in our situation it is not compact and allows this new situation, that is,  the solution may have infinite maximal time and the curve may go to infinity. A required condition for this situation is that the function $B$ is negative at infinity.

In this last situation, the key for understanding the behaviour of the solution is given by a relation between the area of the geodesic spheres $S_r$ and the function $B$. More accurately, the behaviour of the flow in the infinity  is given by the integral:
\begin{align}
\int^\infty\dfrac{1}{B(r)\:Area(S_r)}\:dr
\end{align}
if this integral converges or diverges then, the behaviour of the flow is different. Therefore if:
\begin{align}
\int^\infty\dfrac{1}{B(r)\:Area(S_r)}\:dr\sim\int^\infty\dfrac{1}{Area(S_r)}\:dr
\end{align}
the behaviour of the flow is given by the parabolicity or hyperbolicity of the manifold $M^3_w$, as in these spaces it is characterized by the second integral \cite{Gri99}. Following this trend, the result that we have obtained in this way is:

\begin{theoremC}
Let $\gamma_0\in\mathcal{A}$ and let  $\gamma:\mathbb{S}^1\times[0,\:T)\rightarrow M^3_w$ be the unique maximal solution of the initial value problem (\ref{PVI}) with $\gamma_0$ as initial condition then, if the solution exists for all $t$, that is, $T=\infty$, and it is not bounded:
\begin{itemize}
\item[a)] If $M^3_w$ is parabolic and the $\liminf_{r\rightarrow\infty} B(r)$ is finite then, the flow topologically subconverges to $\gamma_\infty:\mathbb{S}^1\rightarrow [0,\:\infty]\times\mathbb{S}^2,\:p\mapsto (\infty,\:\chi(p))$, where $\chi:\mathbb{S}^1\rightarrow \mathbb{S}^2$ is a smooth embedded closed $\psi$-minimal curve in $(S_{r_0},\:g_{S_{r_0}},\:\psi)$.
\item[b)]  If $M^3_w$ is hyperbolic and the $\limsup_{r\rightarrow\infty}B(r)\neq 0$ then, the flow either
\begin{itemize}
\item topologically converges to $\gamma_\infty:\mathbb{S}^1\rightarrow [0,\:\infty]\times\mathbb{S}^2,\:p\mapsto (\infty,\:\widetilde{\gamma}(p,\:\widetilde{T}))$,
\item or topologically converges to a point $p_\infty\in\mathbb{S}_\infty\equiv \lbrace\infty\rbrace\times\mathbb{S}^2\subset [0,\:\infty]\times\mathbb{S}^2$ in the infinite radius sphere,
\end{itemize}
where $\widetilde\gamma$ is the solution of (\ref{PVIesfera}) with initial condition $\widetilde\gamma_0$ and $\widetilde{T}$ shall be defined in (\ref{CamTi}).
\end{itemize}
\end{theoremC}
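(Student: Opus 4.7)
The plan is to factor (\ref{PVI}) into an autonomous scalar ODE for the radius $r(t)$ and a time-rescaled instance of (\ref{PVIesfera}) for the angular part, and then to read off the dichotomy from the comparison of $\int^{\infty} dr/\mathrm{Area}(S_r)$ (which characterizes parabolicity by \cite{Gri99}) with the integral $\int^{\infty} dr/(B(r)\mathrm{Area}(S_r))$ already flagged in the introduction. First I would use that $\mathcal{A}$ is preserved by the flow (already required for Theorems~A and~B) to write $\gamma(p,t)=(r(t),\chi(p,t))$ with $\chi(\cdot,t)\colon\mathbb{S}^1\to\mathbb{S}^2$. A direct decomposition of $\vec{H}_\xi$ along the totally umbilic sphere $S_{r(t)}$ and its radial normal shows that $\vec{H}_\xi$ splits as the geodesic curvature vector with density $\psi$ of $\widetilde{\gamma}(\cdot,t)$ inside $(S_{r(t)},g_{S_{r(t)}},\psi)$ plus the radial term $-B(r(t))\,\partial_r$; the $p$-independence of the radial piece forces the curve to remain spherical and yields the autonomous ODE $r'(t)=-B(r(t))$. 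Under the hypotheses $T=\infty$ and $\gamma$ unbounded, Theorem~B forces $r(t)\uparrow\infty$ and $B<0$ on $[r_0,\infty)$.

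Next, the identity $g_{S_r}=(w(r)/w(r_0))^2\,g_{S_{r_0}}$ together with the conformal scaling laws for geodesic curvature and for the normal derivative of $\psi$ implies that the time reparametrization
\begin{equation*}
\widetilde{t}(t) := \int_0^t \frac{w(r_0)^2}{w(r(\tau))^2}\,d\tau
\end{equation*}
turns $\chi(p,t)$ into a solution $\widetilde{\gamma}(p,\widetilde{t})$ of (\ref{PVIesfera}) on $[0,\widetilde{T})$ with initial datum $\widetilde{\gamma}_0$, and the change of variable $dt=-dr/B(r)$ gives
\begin{equation*}
\widetilde{T} = \mathrm{Area}(S_{r_0})\int_{r_0}^{\infty}\frac{dr}{|B(r)|\,\mathrm{Area}(S_r)}.
\end{equation*}
The rest of the argument is then a dichotomy on the finiteness of $\widetilde{T}$. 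In case (a), parabolicity yields $\int^{\infty} dr/w(r)^2=\infty$ while $\liminf_{r\to\infty}B(r)$ finite supplies a uniform bound $|B|\le C$, so comparison forces $\widetilde{T}=\infty$; the long-time results for the $\psi$MCF of a spherical curve in \cite{MiVi16} then produce a $C^\infty$-subsequential limit $\chi$ of $\widetilde{\gamma}(\cdot,\widetilde{t}_n)$ which is a closed embedded $\psi$-minimal curve, and since $r(t_n)\to\infty$ this transports to topological subconvergence of $\gamma(\cdot,t_n)$ to $p\mapsto(\infty,\chi(p))$. In case (b), hyperbolicity gives $\int^{\infty} dr/w(r)^2<\infty$ and $\limsup_{r\to\infty}B(r)\neq 0$ gives $|B|\ge c>0$ eventually, whence $\widetilde{T}<\infty$; from the existence theory for (\ref{PVIesfera}), either the spherical maximal time exceeds $\widetilde{T}$ and $\chi(\cdot,t)$ converges uniformly to the smooth limit $\widetilde{\gamma}(\cdot,\widetilde{T})$, or the spherical flow collapses before time $\widetilde{T}$ and \cite{Zhu98} supplies a limit point $p_\infty\in\mathbb{S}^2$; since $r(t)\to\infty$, each subcase yields exactly one of the two topological alternatives stated.

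The main technical obstacle will be making precise the \emph{topological} (sub)convergence at the sphere at infinity $\{\infty\}\times\mathbb{S}^2$ inside the compactification $[0,\infty]\times\mathbb{S}^2$, where the ambient metric $g_w$ degenerates and the usual $C^\infty$-convergence statements lose meaning; once a suitable notion of convergence at infinity is fixed, everything else reduces to the radial ODE computation above, the explicit formula for $\widetilde{T}$, and quoting \cite{MiVi16,Zhu98} for the behaviour of the spherical problem.
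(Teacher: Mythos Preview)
Your proposal is correct and follows essentially the same route as the paper: the decomposition $\gamma=(R(t),\widetilde\gamma(\cdot,\widetilde t))$ with $R'=-B(R)$, the time change $\widetilde t$, the formula $\widetilde T=-\mathrm{Area}(S_{r_0})\int_{r_0}^\infty (B(r)\,\mathrm{Area}(S_r))^{-1}dr$, and the comparison with $\int^\infty dr/\mathrm{Area}(S_r)$ via the bounds on $B$ are exactly what the paper does in the proof of its Theorem~\ref{TInComAsin}~ii). One small correction: in case~(b) the spherical flow cannot collapse \emph{before} time $\widetilde T$ (that would force a finite-time singularity of $\gamma$, contradicting $T=\infty$); the dichotomy is between the spherical maximal time being strictly greater than $\widetilde T$ (smooth limit $\widetilde\gamma(\cdot,\widetilde T)$) and being equal to $\widetilde T$ (collapse to a point by Angenent--Oaks, not \cite{Zhu98}).
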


Let us remind that to achieve the situation of the last theorem, the solution is not bounded, we need that $B\vert_{[r^\star,\:\infty)}<0$ for some $r^\star\in(0,\:\infty)$.

The case b) of the last theorem is satisfied in the Euclidean space $\mathbb{R}^3$ with a Gaussian density $\xi(x)=e^{-\mu^2r^2 /2}$. This case shall be fully studied in the section 6 expanding our understanding of the Gaussian mean curvature flow \cite{BoMi10,MiVi16}.

In the paper \cite{MiVi18} was detailed the link between the MCF and the $\xi$MCF. This relation gives us an equivalence between the flows that in our situation is the following: the $\xi$MCF $\gamma:\mathbb{S}^1\times[0,\:T)\rightarrow M^3_w$ of a curve $\gamma_0\in\mathcal{A}$ is equivalent to the MCF $F:N\times[0,\:T)\rightarrow \widehat{M}$ of a submanifold $F_0:N\rightarrow \widehat{M}$ in the (m+3)-dimensional smooth Riemannian manifold $(\widehat{M},\:\widehat{g})$ given by:
\begin{align*}
\widehat{M}&:=M^3_w\times Q=[0,\:\infty)\times\mathbb{S}^2\times Q,\\
\widehat{g}&:=\widehat\pi^\star g_w+\Big(\dfrac{e^{\xi\circ\widehat{\pi}}}{Vol_{g_Q}(Q)}\Big)^{2/m}\widehat\sigma^\star\:g_Q,
\end{align*}
where $(Q,\:g_Q)$ is a m-dimensional smooth compact Riemannian manifold and $\widehat\pi:\widehat M\rightarrow M^3_w$, $\widehat\sigma:\widehat M\rightarrow Q$ are the natural projections. And the initial submanifold $F_0$ is a (m+1)-dimensional smooth submanifold such that 
\begin{align*}
F_0:N=\mathbb{S}^1\times Q\rightarrow \widehat{M},\:F_0(\alpha,\:q):=(\gamma_0(\alpha),\:q). 
\end{align*}
Therefore, this work leads us to expand our understanding about the MCF of submanifolds in codimension two. Some works in this area are \cite{CheJi01,Wang01}.

This paper is structured in the following way. In section 2, we introduce the basic results about the curve shortening problem with density on a surface. In section 3, we give the result of existence and unicity of solution for the problem (\ref{PVI}) and we show the relation between the solutions of (\ref{PVI}) and (\ref{PVIesfera}). In section 4, we analyse the situation where the solution has finite maximal time. In section 5, we perform a study about the asymptotic behaviour of the solution when it has infinite maximal time. Finally, in section 6, we carry out a detailed study of the problem (\ref{PVI}) with the Gaussian density in the 3-dimensional Euclidean space.

\section{Preliminaries}

\subsection{Curve shortening problem with density}

Let $(\overline{M},\:\overline{g},\:\psi)$ be a 2-dimensional smooth Riemannian manifold with density and let  $\gamma_0:\mathbb{S}^1\rightarrow \overline{M}$  be a smooth curve then, we shall call the \textit{curve shortening problem with density} ($\psi$MCF) of $\gamma_0$ to the solution $\gamma:\mathbb{S}^1\times[0,\:T)\rightarrow\overline{M}$ of the problem:
\begin{align}\label{CSPD}
\left\lbrace \begin{array}{ccc}
\dfrac{\partial}{\partial t}\gamma(p,\:t) & = & \big(\overrightarrow{k}_{\psi}\big)_{\gamma(p,\:t)}, \\
\gamma(\cdot,\:0)& =&\gamma_0,\end{array} \right.
\end{align}
where $\vec{k}_\psi$ is the geodesic curvature vector with density of the curve.

In the particular case where the density is $\psi=0$, then the problem is the classic curve shortening problem widely studied \cite{Ga84,GaHa86,Gra89,Ga90b,ChoZhu01}.

As already remarked in \cite{MiVi16}, we may use the theory of S. Angenent \cite{An90,An91} to guarantee the existence and unicity of solution for the problem (\ref{CSPD}). This theory, together with the later work of Oaks \cite{Oak94}, allows us to enunciate the following theorem:

\begin{theorem}{\cite{An90,An91,Oak94} \label{TAnOaks}}
Let $\gamma_0:\mathbb{S}^1\rightarrow \overline{M}$ be a simple $C^2$ curve. Then the solution to (\ref{CSPD}) with initial condition $\gamma_0$ either collapses to a point on $\overline{M}$ in finite time or exists for infinite time.
\end{theorem}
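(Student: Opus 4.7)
The plan is to rewrite equation (\ref{CSPD}) in a tubular neighbourhood of a smooth reference curve as a quasilinear parabolic PDE for a normal graph function. Decomposing the velocity $\vec k_\psi=\vec k-(\nabla^{\overline M}\psi)^\perp$, the principal symbol of the resulting operator agrees with that of the classical curve shortening flow, while the density $\psi$ contributes only a smooth zeroth-order term in the derivatives of the graph. Thus on any compact region of $(\overline M,\overline g)$ the equation lies in the class of quasilinear parabolic equations on $\mathbb{S}^1$ analysed in \cite{An90,An91}. Invoking Angenent's existence and regularity theorem yields a unique maximal smooth solution $\gamma:\mathbb{S}^1\times[0,T)\to\overline M$; his intersection principle additionally guarantees that an initially simple $\gamma_0$ stays embedded throughout $[0,T)$.

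Suppose $T<\infty$. By the standard continuation criterion for quasilinear parabolic equations, the flow extends smoothly past $T$ unless the geodesic curvature blows up as $t\uparrow T$, or the image $\gamma(\mathbb{S}^1,t)$ escapes every compact subset of $\overline M$. The second alternative is controlled by the gradient-flow structure of the weighted length $L_\psi(\gamma(\cdot,t))=\int_{\mathbb{S}^1}e^{\psi\circ\gamma}|\gamma'|\,du$, which is non-increasing along the flow, combined with Angenent's embeddedness and a ball-covering argument that prevents an embedded curve of uniformly bounded weighted length from drifting out of a compact set in finite time. The whole problem therefore reduces to analysing a curvature blow-up at $t=T$ and showing that it corresponds to the collapse of $\gamma(\cdot,t)$ to a single point of $\overline M$.

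This last step is the main obstacle, and constitutes the core of Oaks' contribution \cite{Oak94}. The plan is to select sequences $(p_i,t_i)$ realising $|k|(p_i,t_i)\to\infty$, rescale the ambient metric $\overline g$ by the factor $|k|(p_i,t_i)^2$ and reparametrise time accordingly. In the rescaling limit the ambient geometry becomes Euclidean and the density contribution $(\nabla\psi)^\perp$, being a lower-order smooth term, is killed; by Angenent's a priori higher-derivative estimates the rescaled flows subconverge to an ancient embedded classical curve shortening flow in $\mathbb{R}^2$. Combining the intersection principle with the Gage--Hamilton--Grayson classification forces this ancient limit to be a shrinking round circle, from which it follows that the diameter of the original $\gamma(\cdot,t)$ tends to zero as $t\uparrow T$, i.e.\ the curve collapses to a single point of $\overline M$. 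The delicate part of the plan is to rule out non-compact Type II limits (Grim reaper or translating solitons) as rescaling limits of a closed embedded curve, which Oaks handles via an intersection comparison with a geodesic foliation through the candidate blow-up point.
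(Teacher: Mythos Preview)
The paper does not prove this theorem at all: it is stated as a preliminary result imported from the literature, with citations to Angenent \cite{An90,An91} and Oaks \cite{Oak94}, and the paper explicitly remarks that the statement was already recorded for this flow in \cite{MiVi16}. There is therefore no ``paper's own proof'' to compare your proposal against.

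That said, your outline is a reasonable high-level summary of how the cited references combine. A few points deserve tightening. First, the density contribution $(\nabla^{\overline M}\psi)^\perp$, once the flow is written as a PDE for a normal graph $u$, depends on the unit normal and hence on $\partial_s u$; it is a first-order (not zeroth-order) perturbation. This does not affect the conclusion, since the principal part is still that of classical curve shortening and Angenent's framework handles lower-order terms of this type, but the statement as written is imprecise. Second, your ``ball-covering argument'' to exclude escape to infinity in finite time is vague and, as phrased, not obviously valid on a general non-compact $(\overline M,\overline g,\psi)$: a bound on $L_\psi$ does not by itself bound the ordinary length if $e^\psi$ is allowed to degenerate at infinity. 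In the paper's applications the ambient surface is a compact round sphere, so this issue never arises, but your sketch claims the general result. Third, the blow-up scenario you describe (rescaling to an ancient embedded solution in $\mathbb{R}^2$ and invoking a classification) is closer to a modern retelling than to Oaks' actual 1994 argument, which proceeds via Angenent's intersection-number theory and a Grayson-style analysis of singular versus regular points rather than through a classification of ancient solutions. If you intend this as a genuine proof rather than a pointer to the literature, you would need to supply the intersection-comparison details that do the real work in \cite{Oak94}.
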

\noindent This theorem was already written for this flow in the article \cite{MiVi16}. Moreover, we can use the work of Xi-Ping Zhu \cite{Zhu98} to obtain that:

\begin{theorem}{\cite{Zhu98} \label{Zhu}}
Let $\gamma_0:\mathbb{S}^1\rightarrow \overline{M}$ be a simple $C^2$ curve. Then, if the solution to (\ref{CSPD}) with initial condition $\gamma_0$ has finite maximal time, the solution collapses to a round point.
\end{theorem}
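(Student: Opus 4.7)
The plan is to combine a blow-up analysis at the singular time with the classification of embedded self-shrinkers in the plane. Since $T<\infty$, Theorem~\ref{TAnOaks} already furnishes collapse to a single point $p\in\overline{M}$, and what remains is to upgrade this to convergence of the parabolically rescaled curves to a shrinking round circle.

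The first step I would take is a Type~I curvature bound: $|k_\psi|^2\le C/(T-t)$ uniformly along the flow. Writing out the evolution equation for $|k_\psi|^2$ under the $\psi$MCF yields the usual reaction--diffusion structure of the classical CSF plus lower-order terms involving the Gauss curvature of $\overline{g}$ and the derivatives of $\psi$. Because the curve is confined to a small compact neighbourhood of $p$ for $t$ close to $T$, these lower-order terms are uniformly bounded, and a maximum-principle comparison with the model scalar ODE $\dot f=2f^{2}$ plus a bounded reservoir term produces the Type~I estimate. Higher-derivative estimates in the spirit of Ecker--Huisken then give uniform $C^\infty$ control on the rescaled flow.

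Next, choose normal coordinates around $p$ and define, for $\lambda\to\infty$, the rescaled flow
\begin{align*}
\gamma_\lambda(\cdot,s):=\lambda\,\exp_p^{-1}\bigl(\gamma(\cdot,\,T-\lambda^{-2}s)\bigr).
\end{align*}
Under this parabolic rescaling the pulled-back metric converges to the Euclidean metric on compact subsets, and the density contribution $\overline{g}(\nabla^{\overline{M}}\psi,N)$ scales as $\lambda^{-1}$ and disappears in the limit. One extracts a subsequential $C^\infty$ limit: a smooth embedded ancient solution of the classical CSF in $\mathbb{R}^2$ defined on $(-\infty,0)$ and passing through the origin. Huisken's monotonicity formula, applied in these coordinates, forces the limit to be a self-shrinker, and by Abresch--Langer the only embedded closed self-shrinker in the plane is the round circle. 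Unscaling yields the round-point conclusion.

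The main obstacle will be the Type~I estimate itself: unlike in the Euclidean plane, no convex barrier is automatically available, and the density contribution to $k_\psi$ is only bounded rather than sign-definite. The resolution, which I expect follows the strategy of \cite{Zhu98}, is to exploit the smoothness of $\psi$ near $p$ together with the shrinking diameter of the curve: once the curve lies inside a ball of radius $\varepsilon$ around $p$, every ambient-geometry or density correction to the evolution of $|k_\psi|^2$ acquires a factor controlled by $\varepsilon$, so a localized version of Huisken's monotonicity formula closes the estimate. A further subtlety is to guarantee that the blow-up limit is non-trivial (not a line through the origin), which is ensured by the simultaneous convergence of the isoperimetric ratio of the rescaled curves and the fact that embeddedness is preserved by Angenent's intersection argument.
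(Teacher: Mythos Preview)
The paper does not prove this theorem; it is stated as a known result due to Zhu \cite{Zhu98} and invoked without proof in Section~2. There is therefore no ``paper's own proof'' to compare your proposal against.

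Regarding your sketch itself: the overall architecture (parabolic blow-up at the singular point, Huisken monotonicity, classification of embedded planar self-shrinkers via Abresch--Langer) is the modern way such a result is packaged, but the step you identify as routine---the Type~I bound $|k_\psi|^2\le C/(T-t)$---is exactly the deep part. The evolution inequality for $|k_\psi|^2$ together with comparison to $\dot f=2f^2$ yields only a \emph{lower} bound on the remaining time in terms of the current maximum curvature, not the reverse inequality you need. Ruling out Type~II singularities for embedded closed curves is precisely the content of Grayson's theorem in the plane and of Zhu's extension to surfaces; it requires an isoperimetric-type or chord--arc estimate (or Angenent's intersection-number machinery applied in a more refined way), not a direct maximum-principle argument on $|k_\psi|^2$. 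So while your outline is reasonable at the level of strategy, the justification you give for the Type~I estimate would not go through as written, and that gap is where essentially all of the work in \cite{Zhu98} lives.
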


The case in which the solution exists for all time was studied in \cite{MiVi16} by V. Miquel and the author in a Riemannian manifold with density with the following properties in the region where the curve moves:
\begin{align}\label{MiVi16Con1}
\left. \begin{array}{l}
\vert\overline\nabla^j K\vert\leq C_j,\:\vert\overline\nabla^j\psi\vert\leq P_j,\:0<E\leq e^\psi\leq D, \\
\text{for\:some\:constants\:}C_j,\:P_j,\:E,\:D;\:j=0,\:1,\:2,\cdots\end{array} \right.
\end{align}
\begin{align}\label{MiVi16Con2}
&\text{The isoperimetric profile } \mathcal{I}\text{ is a well defined continuous function which}
\nonumber\\ &\text{satisfies } \lim_{a\rightarrow a_0}\mathcal{I}(a)=0 \text{ implies } a_0=0.
\end{align}
Where $K$ is the Gauss curvature of the surface $(\overline{M},\:\overline{g})$ and $\overline\nabla$ is the covariant derivative of $(\overline{M},\:\overline{g})$. The authors obtained the following theorem:
\begin{theorem}{\label{TViMi} \cite{MiVi16} }
Let $(\overline{M},\:\overline{g},\:\psi)$ be an orientable 2-Riemannian manifold with density satisfying (\ref{MiVi16Con2}). Let $\gamma(\cdot,\:t)$ be a solution of the $\psi$MCF (\ref{CSPD}) with initial condition an embedded curve $\gamma_0:\mathbb{S}^1\rightarrow\overline{M}^2$. If this solution exists for every $t\in[0,\:\infty)$, and $\gamma(\mathbb{S}^1,\:t)$ is contained in a fixed compact domain $U$ where the conditions (\ref{MiVi16Con1}) are satisfied, then there is a reparametrization $\widetilde{\gamma}(\cdot,\:t)$ of $\gamma(\cdot,\:t)$ such that there is a sequence $\lbrace\widetilde{\gamma}(\cdot,\:t_k)\rbrace_{k\in\mathbb{N}}$, $t_k\rightarrow\infty$, which $C^m$-converges to a closed $\psi$-minimal curve of $\overline{M}^2$ for every $m\in\mathbb{N}$.
\end{theorem}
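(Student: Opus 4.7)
The plan is to exploit the gradient-flow structure of the $\psi$MCF with respect to the weighted length $L_\psi(\gamma)=\int_\gamma e^\psi\,ds$. A direct computation gives
\begin{align*}
\frac{d}{dt}L_\psi(\gamma(\cdot,t))=-\int |\vec{k}_\psi|^2\,e^\psi\,ds\leq 0,
\end{align*}
so $L_\psi$ is non-increasing, converges as $t\to\infty$, and
\begin{align*}
\int_0^\infty\!\!\int_{\gamma(\cdot,t)}|\vec{k}_\psi|^2\,e^\psi\,ds\,dt<\infty.
\end{align*}
Combined with the uniform bound $0<E\leq e^\psi\leq D$ from (\ref{MiVi16Con1}), this yields a sequence $t_k\to\infty$ along which $\int |\vec{k}_\psi|^2\,ds\to 0$. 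If I can also extract a $C^\infty$-subsequential limit of suitable reparametrizations of $\gamma(\cdot,t_k)$, that limit will automatically satisfy $\vec{k}_\psi\equiv 0$ and be $\psi$-minimal.

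The first technical step is to secure uniform higher-order estimates on the curvature along the flow. Following Angenent-type parabolic maximum-principle arguments, $\vec{k}_\psi$ satisfies a reaction--diffusion equation of the form
\begin{align*}
\partial_t k_\psi=\partial_s^2 k_\psi+\mathcal{R}(k_\psi,\,K,\,\psi,\,\overline\nabla\psi,\dots),
\end{align*}
where the reaction term is a polynomial expression in $k_\psi$ and in the ambient curvature and density and their derivatives. The bounds $|\overline\nabla^j K|\leq C_j$ and $|\overline\nabla^j\psi|\leq P_j$ on $U$ from (\ref{MiVi16Con1}) make all coefficients uniformly smooth, so a standard bootstrap produces bounds $\|k_\psi\|_{C^m(\gamma(\cdot,t))}\leq K_m$ for every $m$, independent of $t$.

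To extract the subsequential limit I would reparametrize each $\gamma(\cdot,t_k)$ by a rescaled arc-length parameter $u\in\mathbb{S}^1$ and invoke Arzel\`a--Ascoli together with the uniform $C^m$ bounds to obtain $C^\infty$-subconvergence to some closed immersion $\widetilde\gamma_\infty:\mathbb{S}^1\to U$. The $L^2$-in-time decay together with the interpolation bounds forces $\vec{k}_\psi\equiv 0$ on the limit, so $\widetilde\gamma_\infty$ is $\psi$-minimal. To make sure the limit is a genuine closed curve and not a degenerate one, I invoke the isoperimetric hypothesis (\ref{MiVi16Con2}): the enclosed area remains bounded away from zero along the flow, and its isoperimetric profile therefore stays bounded below, which together with the upper bound $L_\psi(\gamma(\cdot,t))\leq L_\psi(\gamma_0)$ and $E\leq e^\psi\leq D$ gives a positive lower bound on the unweighted length.

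The main obstacle I foresee is precisely this non-degeneracy argument. The upper length bound is immediate from the monotonicity of $L_\psi$, but the lower bound depends on translating the qualitative isoperimetric statement (\ref{MiVi16Con2}) into a quantitative one and on controlling the evolution of the enclosed area. Once that is in hand, the remaining pieces are a standard parabolic regularity/compactness package, and the identification of the limit as $\psi$-minimal is an automatic consequence of the $L^2$-decay of $\vec{k}_\psi$.
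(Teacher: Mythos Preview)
The paper does not contain a proof of this theorem: it is quoted from \cite{MiVi16} and used as a black box. So there is no ``paper's own proof'' to compare against.

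That said, your sketch is the right scheme and matches what the paper implicitly relies on from \cite{MiVi16}. In the proof of Theorem~\ref{TInComAsin}~i) the author invokes precisely the ingredients you describe: ``Step~4 on page~23 of \cite{MiVi16}'' is used to get that $\partial_s^n k_\psi\to 0$ uniformly for all $n$ (your higher-order parabolic estimates plus the $L^2$-in-time decay), the bound $L_t\leq D^{-1}L_\psi(\gamma_0)$ from monotonicity of $L_\psi$ and $e^\psi\geq E$ gives the upper length bound, and ``Lemma~8 of \cite{MiVi16}'' supplies the positive lower bound on the length so that the Arzel\`a--Ascoli limit is a regular curve. Your identification of the lower length bound as the main obstacle, and your plan to derive it from the isoperimetric hypothesis (\ref{MiVi16Con2}), is exactly the role that lemma plays.

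One point to tighten: the step ``a standard bootstrap produces bounds $\|k_\psi\|_{C^m}\leq K_m$ independent of $t$'' is not automatic from the evolution equation alone; without an a~priori $L^\infty$ bound on $k_\psi$ the reaction term can blow up. In \cite{MiVi16} this is handled by combining the $L^2$-in-time integrability with interpolation and the parabolic smoothing to get uniform bounds on each $\partial_s^n k_\psi$, rather than by a direct maximum principle. Your outline gestures at this (``$L^2$-in-time decay together with the interpolation bounds''), but the order of the argument matters: the uniform $C^m$ bounds come \emph{from} the decay, not before it.
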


In our particular case (\ref{PVIesfera}) the Riemannian manifold with density is $(S_{r_0},\:g_{S_{r_0}},\:\psi)$. This manifold is compact and the density is smooth thus, we have that the properties (\ref{MiVi16Con1}) and (\ref{MiVi16Con2}) are satisfied, so we may use Theorem \ref{TViMi} in our situation.

Other important properties that may be obtained from the work of S. Angenent \cite{An90,An91} are:

\begin{theorem}{\label{TeCSPPreEm}\bf (Preservation of the embedded property)} Let $\gamma_0:\mathbb{S}^1\rightarrow \overline{M}$ be a smooth embedded curve and let $\gamma:\mathbb{S}^1\times[0,\:T)\rightarrow\overline{M}$ be the solution of (\ref{CSPD}) with initial condition $\gamma_0$ then, $\gamma(\cdot,\:t):\mathbb{S}^1\rightarrow\overline{M}$ is embedded for all $t\in[0,\:T)$.
\end{theorem}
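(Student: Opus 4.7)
My plan is to reduce the statement to Angenent's parabolic comparison theorem for curves \cite{An90,An91} by a contradiction argument. The $\psi$MCF equation (\ref{CSPD}), written in local graph coordinates, is a strictly parabolic quasilinear second-order PDE of the type handled in Angenent's work; the extra term $-(\overline\nabla\psi)^\perp$ in $\vec{k}_\psi = \vec{k} - (\overline\nabla\psi)^\perp$ only modifies lower-order coefficients and does not spoil parabolicity. Consequently, the preservation of embeddedness will follow from the Sturmian (zero-counting) principle for such equations.

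The argument itself proceeds by contradiction. Set
\[
t_0 := \inf\{\, t \in [0,T) : \gamma(\cdot,t)\ \text{is not embedded}\,\},
\]
and suppose $t_0 < T$. Smoothness in $t$ together with embeddedness of $\gamma_0$ gives $t_0 > 0$, and $\gamma(\cdot,t_0)$ is a $C^\infty$-limit of embedded curves. If $\gamma(\cdot,t_0)$ were itself embedded, embeddedness would persist slightly past $t_0$; if it had a transverse self-intersection, the implicit function theorem would produce a self-intersection for some $t < t_0$. Either conclusion contradicts the definition of $t_0$, so $\gamma(\cdot,t_0)$ must exhibit a first tangential self-contact at some point $P = \gamma(p_0,t_0) = \gamma(q_0,t_0)$ with parallel velocities.

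Next, I would pick coordinates $(x,y)$ on $\overline{M}$ around $P$ with the common tangent along the $x$-axis and, for $t$ in a small interval ending at $t_0$, write the two arcs near $p_0$ and $q_0$ as graphs $y = u_i(x,t)$, $i=1,2$. Both $u_i$ solve the same strictly parabolic quasilinear equation derived from (\ref{CSPD}), so the difference $v := u_1 - u_2$ satisfies a linear parabolic equation with smooth bounded coefficients. The function $v$ has no interior zeros for $t < t_0$ (by embeddedness) but acquires one at $t = t_0$. Angenent's Sturmian theorem then forces the number of zeros of $v(\cdot,t)$ to be non-increasing in $t$, which rules out the appearance of a new zero at $t_0$ and yields the desired contradiction.

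The main obstacle will be making the local graph reduction uniform near $P$ when the two arcs may be tangent to arbitrarily high order at the contact; this is precisely the technical subtlety that Angenent's refined zero-set theorem for linear parabolic equations in one space variable is designed to handle, so no extra work is needed beyond quoting that result. The only genuinely new verification is that the density term $-(\overline\nabla\psi)^\perp$ leaves the parabolic structure intact, which is immediate from smoothness of $\psi$ and the fact that $\vec{k}$ supplies the usual uniformly elliptic principal part.
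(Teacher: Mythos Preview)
The paper does not actually prove this theorem: it is stated in the preliminaries section as one of the ``other important properties that may be obtained from the work of S.~Angenent \cite{An90,An91}'' and is simply quoted without proof. Your proposal is therefore not competing with an argument in the paper but rather supplying a sketch of the argument underlying the cited references.

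That sketch is essentially correct and is the standard route: observe that the density term $-(\overline\nabla\psi)^\perp$ contributes only to the lower-order part of the evolution equation, so locally the flow is of the quasilinear strictly parabolic type covered by Angenent's theory; then run the first-time-of-contact contradiction and invoke the Sturmian zero-set theorem for the difference of the two local graphs. One small point worth tightening is the boundary-zero bookkeeping: when you localise to graphs $u_1,u_2$ over an $x$-interval, the difference $v=u_1-u_2$ may well have zeros at the endpoints of that interval for $t<t_0$, so the clean statement is not that $v$ has \emph{no} zeros for $t<t_0$ but that it has no \emph{interior} sign change there, and Angenent's theorem rules out the creation of a new interior zero (or a multiple zero) at $t_0$. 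With that adjustment your outline matches what the cited papers provide.
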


\begin{theorem}{\label{TeCSPComPrin}\bf (Comparison principle)}
Let $\gamma_1:\mathbb{S}^1\times[0,\:T_1)\rightarrow \overline{M}$,  $\gamma_2:\mathbb{S}^1\times[0,\:T_2)\rightarrow \overline{M}$ be solutions of the initial value problem (\ref{CSPD}) such that $\gamma_1(\cdot,\:0)$ and $\gamma_2(\cdot,\:0)$ are immersed curves.  If $Im\:\gamma_1(\cdot,\:0)\cap Im\:\gamma_2(\cdot,\:0)=\emptyset$ then $Im\:\gamma_1(\cdot,\:t)\cap Im\:\gamma_2(\cdot,\:t)=\emptyset$ for all $t\in[0,\:\min\lbrace T_1,\:T_2\rbrace)$.
\end{theorem}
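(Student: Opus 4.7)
The plan is to argue by contradiction using a first--contact--time argument combined with the strong maximum principle for linear parabolic equations in one spatial variable. This is the standard avoidance principle for classical curve shortening, adapted to the density setting; an alternative, and in fact quicker, route is to invoke directly the Sturmian intersection--number theorem of Angenent \cite{An90,An91} that already underlies Theorem \ref{TAnOaks}.

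Suppose, for contradiction, that the set $I := \{ t \in [0,\min\{T_1,T_2\}) : \text{Im}\,\gamma_1(\cdot,t) \cap \text{Im}\,\gamma_2(\cdot,t) \neq \emptyset\}$ is non--empty, and put $t^\star := \inf I$. By continuity of $\gamma_1,\gamma_2$ and compactness of $\mathbb{S}^1$, the infimum is attained: there exist $u_0,v_0 \in \mathbb{S}^1$ with $\gamma_1(u_0, t^\star) = \gamma_2(v_0, t^\star) =: p_0$, while $\text{Im}\,\gamma_1(\cdot,t) \cap \text{Im}\,\gamma_2(\cdot,t) = \emptyset$ for every $t \in [0,t^\star)$. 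By the initial disjointness hypothesis $t^\star > 0$, and the contact at $p_0$ must be tangential: otherwise a transverse intersection at time $t^\star$ would persist by the implicit function theorem for $t$ slightly less than $t^\star$, contradicting the choice of $t^\star$.

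Next, I would pass to a local graphical picture around $p_0$. Since the curves are smoothly immersed, short arcs of $\gamma_1(\cdot,t^\star)$ near $u_0$ and of $\gamma_2(\cdot,t^\star)$ near $v_0$ are embedded and share a common tangent at $p_0$. Choose a smooth chart $(x,y)$ on a geodesic disc around $p_0$ in which $p_0$ is the origin and this common tangent is the $x$--axis. For $t$ slightly less than $t^\star$ and after reparameterization, each curve is a graph $y = f_i(x,t)$ on a common box $[-a,a]\times[t^\star-\delta, t^\star]$, and the flow equation (\ref{CSPD}) takes the quasilinear parabolic form
\begin{equation*}
\partial_t f_i \;=\; \frac{\partial_{xx} f_i}{1+(\partial_x f_i)^2} \;+\; G(x,f_i,\partial_x f_i),
\end{equation*}
where $G$ encodes the lower--order contributions of the metric $\overline{g}$ and of the density $\psi$. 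The difference $w := f_1 - f_2$ then satisfies a linear parabolic equation $\partial_t w = a(x,t)\partial_{xx}w + b(x,t)\partial_x w + c(x,t)\,w$ with bounded coefficients and $a>0$, obtained by the usual interpolation trick applied to the mean--curvature nonlinearity and to $G$. By the disjointness for $t<t^\star$ and continuity, $w$ has a fixed sign on $[-a,a]\times[t^\star-\delta,t^\star]$ and attains an interior zero at $(0,t^\star)$; the strong maximum principle then forces $w \equiv 0$ on a backward parabolic neighbourhood, which contradicts the disjointness of the images for $t<t^\star$.

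The main obstacle is justifying the graphical reduction together with the fixed sign of $w$ near the contact: one must check that, for $t$ slightly below $t^\star$, the two short arcs of $\gamma_i(\cdot,t)$ near $p_0$ are indeed graphs over the common tangent and lie strictly on opposite sides. A clean way is to work in a tubular neighbourhood of $\gamma_1(\cdot,t^\star)$ and represent $\gamma_2(\cdot,t)$ as a signed normal distance to $\gamma_1(\cdot,t)$; for small $t^\star - t$ and small arc--length parameter the smoothness of both flows and of the exponential map give a $C^2$ signed distance whose sign is pinned down by disjointness and tangentiality. If one prefers to bypass this geometric bookkeeping, Angenent's Sturmian theorem \cite{An90,An91} yields the conclusion at once: the number of intersection points of $\gamma_1(\cdot,t)$ and $\gamma_2(\cdot,t)$ is finite, non--increasing in $t$, and strictly drops at every non--transverse intersection, so starting from count zero it remains zero throughout $[0,\min\{T_1,T_2\})$.
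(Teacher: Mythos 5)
Your argument is correct, but it is worth noting that the paper does not actually prove this statement: it is quoted in the preliminaries as one of the ``important properties that may be obtained from the work of S. Angenent'', i.e.\ the paper's route is exactly your fallback, invoking Angenent's Sturmian intersection theory \cite{An90,An91}, under which the intersection number of two solutions of the same flow is finite for $t>0$, non-increasing, and hence stays at zero if it starts at zero. Your primary argument --- first contact time, tangency of the contact by persistence of transverse intersections, local graphical reduction, and the strong maximum principle applied to the difference $w=f_1-f_2$ of the two graph functions --- is a genuinely different, self-contained avoidance proof that does not need the full Sturmian machinery, at the cost of the geometric bookkeeping you correctly flag: one must verify that for $t$ slightly below $t^\star$ both short arcs are graphs over the common tangent (smooth dependence on $t$ gives this) and that $w$ has a fixed sign there (immediate, since disjointness of the images forces $w\neq 0$ pointwise for $t<t^\star$, and continuity fixes the sign). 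Two small points of precision: in a general chart the leading term of the graphical equation is not exactly $\partial_{xx}f_i/(1+(\partial_x f_i)^2)$ but $a(x,f_i,\partial_x f_i)\,\partial_{xx}f_i$ with $a>0$ depending on $\overline{g}$, which is all the interpolation trick needs; and since the zeroth-order coefficient $c(x,t)$ need not have a sign, you should pass to $e^{-\lambda t}w$ before applying the strong maximum principle at the interior-in-space, final-in-time zero $(0,t^\star)$. With these routine adjustments your maximum-principle route is sound, and it trades the paper's one-line citation for an elementary argument that also makes transparent why no regularity beyond smoothness of the two flows is required.
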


\section{The flow}

In this section we prove the existence and unicity of the solution of the problem (\ref{PVI}). Also, we show the relation between the solutions of the problems (\ref{PVI}) and (\ref{PVIesfera}), as well as some properties of the problem (\ref{PVI}).

\begin{theorem}{\label{TeEyU}\bf (Existence and uniqueness)}
The initial value problem (\ref{PVI}) has a unique solution $\gamma:\mathbb{S}^1\times[0,\:T)\rightarrow M^3_w$ given by
\begin{align}\label{SoWar}
\gamma(p,\:t):=exp\Big(\widetilde\gamma(p,\:\widetilde{t}(t)),\:(R(t)-r_0)\partial_r\Big),
\end{align}
where $\widetilde\gamma$ is the unique solution for the initial value problem (\ref{PVIesfera}) with $\widetilde{\gamma}_0$ as initial condition, $R(t)$ is the unique solution for the ODE's system:
\begin{align}\label{SisRadio}
\left\lbrace \begin{array}{ccc}
R'(t) & = & -B(R(t)), \\
R(0) & =&r_0,\end{array} \right.
\end{align}
and
\begin{align}\label{CamTi}
\widetilde{t}:[0,\:T)&\longrightarrow [0,\:\widetilde{T})
\nonumber\\
t&\longmapsto \widetilde{t}(t):=\int_0^t\Big(\dfrac{w(r_0)}{w(R(t))}\Big)^2\:dt,
\end{align}
with $\widetilde{T}\equiv\int_0^T\Big(\dfrac{w(r_0)}{w(R(t))}\Big)^2\:dt$.
\end{theorem}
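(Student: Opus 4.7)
The plan is to verify that (\ref{SoWar}) is a solution of (\ref{PVI}) and to invoke the standard short-time existence and uniqueness result for this quasilinear strictly parabolic flow to conclude that it is the unique one. This strategy decouples the problem into the scalar radial ODE (\ref{SisRadio}) and the spherical flow (\ref{PVIesfera}), whose well-posedness is already provided by Theorem \ref{TAnOaks}.

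The key geometric ingredient is an orthogonal decomposition of $\vec H_\xi$ along any curve $\gamma\subset S_r$. In the warped product (\ref{DefVaAmbiRota}) one has $\overline\nabla_X\partial_r=(w'(r)/w(r))\,X$ for every $X$ tangent to $S_r$, hence the second fundamental form of $S_r$ in $M^3_w$ is $\mathrm{II}(X,Y)=-(w'(r)/w(r))\,g_{S_r}(X,Y)\,\partial_r$. Applying the Gauss formula to a $g_{S_r}$-unit tangent $T$ of $\gamma$ yields $\vec k_{M^3_w}(\gamma)=\vec k_{S_r}(\gamma)-(w'(r)/w(r))\,\partial_r$. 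On the density side, $\nabla^{M^3_w}\xi=\varphi'(r)\,\partial_r+\nabla^{S_r}\psi$, and since $\partial_r$ is already normal to $\gamma$ in $M^3_w$, its projection to the normal bundle of $\gamma$ equals $\varphi'(r)\,\partial_r+(\nabla^{S_r}\psi)^{\perp_{S_r}}$. Subtracting gives
\begin{equation*}
\vec H_\xi(\gamma)\;=\;\overrightarrow{k}_{S_r,\psi}(\gamma)\;-\;B(r)\,\partial_r,
\end{equation*}
with $B$ as in (\ref{DefBEs}).

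With this formula in hand I would differentiate $\gamma(p,t)=\exp(\widetilde\gamma(p,\widetilde t(t)),(R(t)-r_0)\partial_r)$ in $t$. Since radial geodesics in the warped product simply translate the $r$-coordinate while preserving the angular factor, $\partial_t\gamma$ splits into the radial piece $R'(t)\,\partial_r$ and the angular piece $\widetilde t'(t)\,\partial_{\widetilde t}\widetilde\gamma=\widetilde t'(t)\,\overrightarrow{k}_{S_{r_0},\psi}(\widetilde\gamma)$. The radial equation $R'(t)=-B(R(t))$ is exactly (\ref{SisRadio}). For the angular piece, the sphere metrics $g_{S_r}=w(r)^2 g_{\mathbb{S}^2}$ differ only by a constant conformal factor, so for the same underlying curve on $\mathbb{S}^2$ one has $\overrightarrow{k}_{S_r,\psi}=(w(r_0)/w(r))^2\,\overrightarrow{k}_{S_{r_0},\psi}$ under the radial identification $S_{r_0}\cong S_r$ (the tangential components of the gradient of $\psi$ scale the same way as the geodesic curvature, so the $\psi$-correction scales with the same factor). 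Matching the angular piece of $\partial_t\gamma$ with $\overrightarrow{k}_{S_{R(t)},\psi}(\gamma)$ therefore forces $\widetilde t'(t)=(w(r_0)/w(R(t)))^2$, which is exactly the time change (\ref{CamTi}).

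To close, $R$ exists and is unique on a maximal interval by ODE theory, $\widetilde\gamma$ exists and is unique on $[0,\widetilde T)$ by Theorem \ref{TAnOaks}, and the ansatz is then well defined on $[0,T)$ with $T$ as in the theorem. Uniqueness of the $\xi$MCF in codimension two follows from the standard short-time existence and uniqueness theory for quasilinear parabolic geometric flows (applied modulo tangential reparametrizations); the explicit $\gamma$ produced above therefore coincides with the unique maximal solution. The main technical obstacle I anticipate is keeping track of the identifications between $T_{\widetilde\gamma}S_{r_0}$ and $T_\gamma S_{R(t)}$ in the conformal scaling of $\overrightarrow{k}_\psi$, and setting up the parabolic short-time theory in codimension two cleanly enough to invoke uniqueness; once the decomposition of $\vec H_\xi$ displayed above is established, the remaining verifications become routine bookkeeping.
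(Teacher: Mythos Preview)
Your proposal is correct and follows essentially the same route as the paper: the paper isolates exactly the two ingredients you identify---the decomposition $\vec H_\xi=\vec k_{S_r,\psi}-B(r)\partial_r$ (its Lemma~\ref{LeExpCurMCurEs}) and the conformal scaling $\vec k_{S_r,\psi}=(w(r_0)/w(r))^2\,\vec k_{S_{r_0},\psi}$ (its Lemma~\ref{LeReCurGeoConDen})---then verifies directly that the ansatz (\ref{SoWar}) satisfies (\ref{PVI}) and appeals to the Hamilton/Smoczyk short-time uniqueness for higher-codimension mean curvature flow via the $\xi$MCF--MCF link. The only cosmetic difference is that the paper packages the time-change bijectivity as a separate lemma and writes the $\partial_t\gamma$ computation through the exponential map rather than by ``radial translation plus angular part,'' but the content is identical.
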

\noindent Prior to the proof, we need some previous lemmas.

Given $\gamma\in\mathcal{A}$ we shall denote by $\vec{k}_{S_{r},\:\psi}$ the geodesic curvature vector with density of the curve $\widetilde\gamma$ as curve of $(S_r,\:g_{S_r},\:\psi)$.

\begin{lemma}\label{LeExpCurMCurEs}
Given $\gamma\in\mathcal{A}$:
\begin{align}
\vec{H}_\xi=\vec{k}_{S_{r},\:\psi}+\vec{B}(\gamma)
\end{align}
where $r=\pi(Im\:\gamma)$.
\end{lemma}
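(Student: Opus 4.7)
The plan is to decompose $\vec H_\xi$ into the geodesic curvature vector of $\widetilde\gamma$ inside the geodesic sphere $S_r$ and a radial correction coming from the warping function $w$ and the radial density $\varphi$. The key structural fact is that every $\gamma\in\mathcal A$ lies entirely in a single geodesic sphere, so the natural tools are the Gauss formula for the inclusion $S_r\hookrightarrow M^3_w$ together with the orthogonal splitting of $\nabla^M\xi$ induced by $\xi=\varphi\circ\pi+\psi\circ\sigma$.

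First I would compute the second fundamental form of $S_r$ in $M^3_w$. A short calculation in warped-product coordinates (using the standard Christoffel symbols of $g_w=dr^2+w^2(r)\sigma^\star g_{\mathbb S^2}$) gives, for all $X,Y$ tangent to $S_r$,
\[
\nabla^M_X Y=\nabla^{S_r}_X Y-\frac{w'(r)}{w(r)}\,g_{S_r}(X,Y)\,\partial_r,
\]
so $II_{S_r}(X,Y)=-(w'(r)/w(r))\,g_{S_r}(X,Y)\partial_r$. Applied to the unit tangent $T$ of $\gamma$ (with $g_{S_r}(T,T)=1$), the Gauss formula for the codimension-one inclusion $S_r\hookrightarrow M^3_w$ produces
\[
\vec k=\nabla^M_T T=\vec k_{S_r}-\frac{w'(r)}{w(r)}\,\partial_r,
\]
where $\vec k_{S_r}$ denotes the geodesic curvature vector of $\widetilde\gamma$ as a curve in $(S_r,g_{S_r})$.

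Second, I would split $\nabla^M\xi$. Because $\partial_r$ is $g_w$-orthogonal to every sphere-tangent direction, and $\xi(x)=\varphi(\pi(x))+\psi(\sigma(x))$, the gradient decomposes as
\[
\nabla^M\xi=\varphi'(r)\,\partial_r+\nabla^{S_r}\psi,
\]
where $\nabla^{S_r}\psi$ is the gradient of $\psi$ viewed as a function on $(S_r,g_{S_r})$ via $\sigma$. Projecting onto the rank-two normal bundle of $\gamma$ in $M^3_w$: the radial summand is already normal to $\gamma$ (since $\gamma\subset S_r$), while the sphere-tangent summand projects onto its component perpendicular to $\widetilde\gamma$ inside $S_r$, giving
\[
(\nabla^M\xi)^\perp=\varphi'(r)\,\partial_r+(\nabla^{S_r}\psi)^{\perp_\gamma}.
\]

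Combining the two identities and recognising each piece,
\[
\vec H_\xi=\vec k-(\nabla^M\xi)^\perp=\bigl(\vec k_{S_r}-(\nabla^{S_r}\psi)^{\perp_\gamma}\bigr)+\Bigl(-\tfrac{w'(r)}{w(r)}-\varphi'(r)\Bigr)\partial_r=\vec k_{S_r,\psi}+\vec B(\gamma),
\]
which is the claim. No serious obstacle arises; the only point requiring care is bookkeeping for the two different orthogonal projections (normal to $\gamma$ inside $M^3_w$ versus normal to $\widetilde\gamma$ inside $S_r$), and exploiting the $g_w$-orthogonality of $\partial_r$ to the sphere factor so that the radial and spherical contributions genuinely decouple.
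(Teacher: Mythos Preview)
Your argument is correct and follows essentially the same route as the paper: both use the Gauss formula for the inclusion $S_r\hookrightarrow M^3_w$ to separate the radial piece $-(w'/w)\partial_r$ from the intrinsic sphere curvature $\vec k_{S_r}$, and both split $\nabla^M\xi$ into its $\varphi$- and $\psi$-parts using the $g_w$-orthogonality of $\partial_r$ to the spheres. The only cosmetic difference is that the paper works componentwise in the frame $\{\tau,\nu,\partial_r\}$ rather than first isolating $II_{S_r}$ as you do.
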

\begin{proof}
Let $\lbrace\tau,\:\nu,\:\partial_r\rbrace$ be an orthonormal frame over the curve with $\tau$ the unit tangent vector to the curve, $\nu$ unit normal to the curve and tangent to the geodesic sphere where the curve is contained, we note that $\lbrace\tau,\:\nu\rbrace$ is an orthonormal frame over the curve in the geodesic sphere $(S_r,\:g_{S_r})$ . Now we may calculate the expression of the mean curvature vector with density:
\begin{align*}
\vec{H_\xi}&=\vec{H}-\big(\overline{\nabla}\xi\big)^\perp
=\<\ode_\tau \tau,\:\nu\rr\nu+\<\ode_\tau \tau,\:\partial_r\rr\partial_r-\<\ode\xi,\:\nu\rr\nu
-\<\ode\xi,\:\partial_r\rr\partial_r
\\
&=\Big(\<\ode_\tau \tau,\:\nu\rr-\<\ode\xi,\:\nu\rr\Big)\nu+\Big(\<\ode_\tau \tau,\:\partial_r\rr-\<\ode\xi,\:\partial_r\rr\Big)\partial_r
\\
&=\Big(\<\nabla^{S_{r}}_\tau \tau+\alpha^{S_{r}}(\tau,\:\tau),\:\nu\rr-\<\ode(\psi\circ\sigma),\:\nu\rr\Big)\nu
\\&\qquad+\Big(-\dfrac{\<\tau,\:\tau\rr}{w(r)}\<\ode w,\:\partial_r\rr-\<\ode(\varphi\circ\pi),\:\partial_r\rr\Big)\partial_r
\\
&=\Big(\<\nabla^{S_{r}}_\tau \tau,\:\nu\rr-\<\ode(\psi\circ\sigma),\:\nu\rr\Big)\nu
+\Big(-\dfrac{w'(r)}{w(r)}-\varphi'(r)\Big)\partial_r
\\
&=\vec{k}_{S_{r},\:\psi}+\vec{B}(\gamma),
\end{align*}
where $\overline{\nabla}$ is the covariant derivative of $(M^3_w,\:g_w)$.
\end{proof}

\begin{lemma}{\label{LeCamTi}} The application (\ref{CamTi}) is a diffeomorphism.
\end{lemma}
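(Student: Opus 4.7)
The plan is to show that the map $\widetilde{t}$ is a smooth strictly increasing bijection onto $[0,\widetilde{T})$ with nowhere vanishing derivative; the inverse function theorem will then finish the job. The three ingredients I need are smoothness of the integrand, strict positivity of the integrand, and the identification of the image with $[0,\widetilde{T})$, the last of which is essentially tautological from the definition of $\widetilde{T}$.

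First I would observe that the ODE system (\ref{SisRadio}) is a scalar autonomous equation with smooth right-hand side $-B$ on $(0,\infty)$, so $R:[0,T)\to(0,\infty)$ is smooth; the key point is that $R(t)>0$ for every $t\in[0,T)$, since reaching the pole $r=0$ would correspond to the curve collapsing to $o$, which by the maximality of $T$ can only happen in the limit $t\to T^-$ (and $R$ extends continuously to $T$ if the curve stays away from the pole). Consequently $w(R(t))>0$ on $[0,T)$, and the integrand $(w(r_0)/w(R(t)))^2$ is a smooth, strictly positive function of $t$ on $[0,T)$. Hence $\widetilde{t}$ is $C^\infty$ on $[0,T)$ with
\begin{align*}
\widetilde{t}\,'(t)=\Big(\dfrac{w(r_0)}{w(R(t))}\Big)^{2}>0,
\end{align*}
so $\widetilde{t}$ is strictly increasing.

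Next I would verify bijectivity. Clearly $\widetilde{t}(0)=0$, and since $\widetilde{t}$ is strictly increasing on $[0,T)$, the image is the interval $[0,\lim_{t\to T^-}\widetilde{t}(t))$. But that supremum is exactly $\int_0^T(w(r_0)/w(R(t)))^2\,dt=\widetilde{T}$ by the definition stated just after (\ref{CamTi}). Thus $\widetilde{t}:[0,T)\to[0,\widetilde{T})$ is a smooth bijection.

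Finally, since $\widetilde{t}\,'(t)>0$ everywhere, the inverse function theorem gives that the inverse map $t=t(\widetilde{t})$ is also smooth on $[0,\widetilde{T})$, and therefore $\widetilde{t}$ is a $C^\infty$ diffeomorphism. The only delicate point in the whole argument is the justification that $R(t)>0$ throughout $[0,T)$; everything else is immediate from the definition (\ref{CamTi}) and the positivity of $w$ on $(0,\infty)$.
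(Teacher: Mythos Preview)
Your proof is correct and follows essentially the same approach as the paper's: smoothness of the integrand, the computation $\widetilde{t}\,'(t)=(w(r_0)/w(R(t)))^2>0$, identification of the endpoints, and the inverse function theorem. If anything, you are slightly more careful than the paper in justifying $R(t)>0$ on $[0,T)$ and in phrasing the image as $[0,\lim_{t\to T^-}\widetilde{t}(t))$ rather than writing $\widetilde{t}(T)=\widetilde{T}$.
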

\begin{proof}
This application
\begin{itemize}
\item is smooth for being composed of smooth functions,
\item $\widetilde{t}(0)=0$, $\widetilde{t}(T)=\widetilde{T}$,
\item  its derivative is strictly positive for all $t$:
\begin{align*}
\widetilde{t}'(t)=\Big(\dfrac{w(r_0)}{w(R(t))}\Big)^2>0\text{\:for\:all\:}t\in[0,\:T).
\end{align*}
\end{itemize}
So by the inverse function theorem, it is a local diffeomorphism which, together the injectivity of the application, imply that it is a diffeomorphism.
\end{proof}

\begin{lemma}{\label{LeReCurGeoConDen}} Let $\gamma\in\mathcal{A}$ and let $\hat\gamma:=exp(\gamma,\:-(r-r_0)\partial_r)$ with $r=\pi(\gamma)$ and $r_0\in(0,\:\infty)$, then:
\begin{align*}
\vec{k}_{\widetilde\gamma,\:S_{r},\:\psi}
&=\dfrac{w^2(r_0)}{w^2(r)}\:\vec{k}_{\hat\gamma,\:S_{r_0},\:\psi}
\end{align*}
as vector field on $\mathbb{S}^2$.
\end{lemma}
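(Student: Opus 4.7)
The plan is to reduce everything to a constant conformal rescaling on $\mathbb{S}^2$. Via the projection $\sigma$, the geodesic sphere $(S_r, g_{S_r})$ is isometric to $(\mathbb{S}^2, w^2(r) g_{\mathbb{S}^2})$, and since $\hat\gamma$ is obtained from $\gamma$ by a radial exponential translation, both $\widetilde\gamma$ and $\hat\gamma$ descend to a single curve $\eta := \sigma \circ \gamma : \mathbb{S}^1 \to \mathbb{S}^2$. Thus the two sides of the claimed identity are geodesic curvature vectors with density of the \emph{same} curve $\eta$ computed in two conformally related metrics $g^r := w^2(r)\, g_{\mathbb{S}^2}$ and $g^{r_0} := w^2(r_0)\, g_{\mathbb{S}^2}$, related by $g^r = \lambda^2 g^{r_0}$ with the constant factor $\lambda := w(r)/w(r_0)$.

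Since $\lambda$ is constant on $\mathbb{S}^2$, the Levi-Civita connections of $g^r$ and $g^{r_0}$ coincide, and I would use this to track how each summand of $\vec{k}_\psi = \vec{k} - (\nabla^S \psi)^\perp$ transforms under the rescaling. If $T$ denotes the unit tangent of $\eta$ in $g^{r_0}$, then $T/\lambda$ is unit in $g^r$, so $\vec{k}^{\,r} = \nabla_{T/\lambda}(T/\lambda) = \lambda^{-2} \vec{k}^{\,r_0}$. For the density term, the defining relation $g^r(\nabla^r \psi, X) = d\psi(X) = g^{r_0}(\nabla^{r_0}\psi, X)$ yields $\nabla^r \psi = \lambda^{-2} \nabla^{r_0}\psi$; combining this with the facts that the unit normal also rescales by $1/\lambda$ and that the orthogonal decomposition of the tangent space along $\eta$ is the same for both metrics, a short calculation gives $(\nabla^r \psi)^\perp = \lambda^{-2}(\nabla^{r_0}\psi)^\perp$.

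Putting the two scalings together yields
$\vec{k}_{\widetilde\gamma, S_r, \psi} = \lambda^{-2}\, \vec{k}_{\hat\gamma, S_{r_0}, \psi} = \bigl(w^2(r_0)/w^2(r)\bigr)\, \vec{k}_{\hat\gamma, S_{r_0}, \psi}$, which is the stated identity. There is no serious obstacle here; the only point that warrants care is the bookkeeping in the orthogonal projection, where the unit normal, the inner product, and the gradient each contribute powers of $\lambda$ that must combine to match the $\lambda^{-2}$ factor already exhibited by the bare curvature vector.
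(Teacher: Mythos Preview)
Your proof is correct and follows essentially the same route as the paper. The paper also notes the constant conformal relation $g_{S_r}=\dfrac{w^2(r)}{w^2(r_0)}\,g_{S_{r_0}}$, splits $\vec{k}_\psi$ into the bare curvature vector and the density term, and shows each picks up the factor $w^2(r_0)/w^2(r)$; the only cosmetic difference is that for the density term the paper works directly with the unit normal $\nu$ in $S_r$ and rescales it to $\nu/\vert\nu\vert_{S_{r_0}}$, whereas you phrase the same computation via the gradient identity $\nabla^r\psi=\lambda^{-2}\nabla^{r_0}\psi$.
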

\begin{proof} 
We note that $g_{S_r}=\dfrac{w^2(r)}{w^2(r_0)}g_{S_{r_0}}$, then:
\begin{align*}
\vec{k}_{\widetilde\gamma,\:S_{r},\:\psi}
&=\vec{k}_{\widetilde\gamma,\:S_{r}}-\nabla^{S_r}(\psi\circ\sigma)^\perp
=\dfrac{1}{\dfrac{w^2(r)}{w^2(r_0)}}\vec{k}_{\hat\gamma,\:S_{r_0}}-\nu(\psi\circ\sigma)\nu
\\
&=\dfrac{w^2(r_0)}{w^2(r)}\vec{k}_{\hat\gamma,\:S_{r_0}}-\vert \nu \vert^2_{S_{r_0}}\dfrac{\nu}{\vert \nu \vert_{S_{r_0}}}(\psi\circ\sigma)\dfrac{\nu}{\vert \nu \vert_{S_{r_0}}}
\\
&=\dfrac{w^2(r_0)}{w^2(r)}\vec{k}_{\hat\gamma,\:S_{r_0}}-\dfrac{w^2(r_0)}{w^2(r)}\dfrac{\nu}{\vert \nu \vert_{S_{r_0}}}(\psi\circ\sigma)\dfrac{\nu}{\vert \nu \vert_{S_{r_0}}}
\\
&=\dfrac{w^2(r_0)}{w^2(r)}\Big(\vec{k}_{\hat\gamma,\:S_{r_0}}-\dfrac{\nu}{\vert \nu \vert_{S_{r_0}}}(\psi\circ\sigma)\dfrac{\nu}{\vert \nu \vert_{S_{r_0}}}\Big)
\\
&=\dfrac{w^2(r_0)}{w^2(r)}\Big(\vec{k}_{\hat\gamma,\:S_{r_0}}-\nabla^{S_{r_0}}(\psi\circ\sigma)^\perp\Big)=\dfrac{w^2(r_0)}{w^2(r)}\vec{k}_{\hat\gamma,\:S_{r_0},\:\psi},
\end{align*}
 where $\nu$ is the normal field to the curve $\widetilde\gamma$ in the sphere $(S_r,\:g_r)$.
\end{proof}

Now, we are ready to give the proof of Theorem \ref{TeEyU}.

\begin{proof}{\bf (Theorem \ref{TeEyU})}
Given $\gamma$ by the expression (\ref{SoWar}), as the problem (\ref{PVIesfera}) has a unique solution for short times by Theorem \ref{TAnOaks}, (\ref{CamTi}) is a diffeomorphism by Lemma \ref{LeCamTi} and the system (\ref{SisRadio}) has a unique solution for being $B$ locally Lipschitz then, $\gamma$ is well defined on $\mathbb{S}^1\times[0,\:T)$ for any $T>0$ and there is a unique function with the definition (\ref{SoWar}). $\gamma$ is also smooth by being composed of smooth functions.

On the other hand:
\begin{align*}
\gamma(p,\:0)&=exp\Big(\widetilde\gamma(p,\:0),\:(R(0)-r_0)\partial_r\Big)=exp\Big(\widetilde\gamma(p,\:0),\:0\Big)
\\
&=\widetilde{\gamma}(p,\:0)=\widetilde{\gamma_0}(p)=\gamma_0(p),\:\forall\:p\in\mathbb{S}^1,
\end{align*}
and
\begin{align*}
\dfrac{\partial}{\partial t}\Big\vert_{t_1}\gamma(p,\:t)
&=\dfrac{\partial}{\partial t}\Big\vert_{t_1}exp\Big(\widetilde\gamma(p,\:\widetilde{t}),\:(R(t)-r_0)\partial_r\Big)
\\
&=\dfrac{\partial}{\partial t}\Big\vert_{t_1}exp\Big(\widetilde\gamma(p,\:\widetilde{t}_1),\:(R(t)-r_0)\partial_r\Big)
+\dfrac{\partial}{\partial t}\Big\vert_{t_1}exp\Big(\widetilde\gamma(p,\:\widetilde{t}),\:(R(t_1)-r_0)\partial_r\Big)
\\
&=\dfrac{\partial}{\partial t}\Big\vert_{t_1}\eta_{\widetilde\gamma(p,\:\widetilde{t}_1),\:\partial_r}\Big(R(t)-r_0\Big)
+exp_\star\Big(\dfrac{\partial}{\partial t}\Big\vert_{t_1}\widetilde\gamma(p,\:\widetilde{t}),\:(R(t_1)-r_0)\partial_r\Big)
\\
&=\eta_{\widetilde\gamma(p,\:\widetilde{t}_1),\:\partial_r}'\Big(R(t_1)-r_0\Big)R'(t_1)
+exp_\star\Big(\dfrac{\partial\widetilde t}{\partial t}\dfrac{\partial}{\partial\widetilde t}\Big\vert_{\widetilde t_1}\widetilde\gamma(p,\:\widetilde{t}),\:(R(t_1)-r_0)\partial_r\Big)
\\
&=R'(t_1)\partial_r\vert_{\gamma(p,\:t_1)}
+exp_\star\Big(\Big(\dfrac{w(r_0)}{w(R(t_1))}\Big)^2\big(\overrightarrow{k}_{S_{r_0},\:\psi}\big)_{\widetilde{\gamma}(p,\:\widetilde{t}_1)},\:(R(t_1)-r_0)\partial_r\Big)
\\
&=-B(R(t_1))\partial_r\vert_{\gamma(p,\:t_1)}
+\Big(\dfrac{w(r_0)}{w(R(t_1))}\Big)^2exp_\star\Big(
\big(\overrightarrow{k}_{S_{r_0},\:\psi}\big)_{\widetilde{\gamma}(p,\:\widetilde{t}_1)},\:(R(t_1)-r_0)\partial_r\Big)
\\
&=\vec{B}(\gamma(p,\:t_1))
+(\vec{k}_{S_{R(t_1),\:\psi}})_{\gamma(p,\:t_1)}
=(\vec{H}_\xi)_{\gamma(p,\:t_1)},
\end{align*}
where we have used Lemma \ref{LeReCurGeoConDen} and also Lemma \ref{LeExpCurMCurEs}, in the last equality, given that from (\ref{SoWar}) we have that $\gamma(\cdot,\:t)\in\mathcal{A}$ for all $t\in[0,\:T)$. We have denoted by $\eta_{\widetilde\gamma(p,\:\widetilde{t}_1),\:\partial_r}$ the geodesic that starts at $\widetilde\gamma(p,\:\widetilde{t}_1)$ and whose tangent vector at this point is $\partial_r$. Therefore (\ref{SoWar}) is a solution for problem (\ref{PVI}). 

To guarantee that (\ref{SoWar}) is the unique solution of the problem (\ref{PVI}) we may use the theorem of existence and unicity of solution for the MCF in higher codimension. The statement of this theorem was presented in a survey about this topic by K. Smoczyk \cite{Sm12}  as a special case of a theorem by R. Hamilton \cite{Ha82}. Let us remind the link between the $\xi$MCF and the MCF.
\end{proof}

\begin{corollary}{\label{CEaU}}
$Im\:\gamma(\cdot,\:t)\subset S_{R(t)},\:\forall\:t\in[0,\:T).$
\end{corollary}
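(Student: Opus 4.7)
The plan is to read off the corollary directly from the explicit formula (\ref{SoWar}) given by Theorem \ref{TeEyU}. Since
\[
\gamma(p,\:t)=\exp\!\Big(\widetilde\gamma(p,\:\widetilde t(t)),\:(R(t)-r_0)\partial_r\Big),
\]
it suffices to verify that $\pi\!\circ\!\gamma(p,\:t)=R(t)$ for every $p\in\mathbb{S}^1$ and every $t\in[0,\:T)$.

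First I would observe that $\widetilde\gamma(\cdot,\:\widetilde t(t))$, being the solution of (\ref{PVIesfera}) in $S_{r_0}$, lies entirely in the geodesic sphere $S_{r_0}$, i.e.\ $\pi(\widetilde\gamma(p,\:\widetilde t(t)))=r_0$. Next, recalling the warped-product structure $g_w=\pi^\star dr^2+(w\circ\pi)^2\sigma^\star g_{\mathbb{S}^2}$, the integral curves of $\partial_r$ are the radial geodesics of $(M^3_w,\:g_w)$: for any $q\in S_{r_0}$, the curve $s\mapsto\eta_{q,\:\partial_r}(s)$ satisfies $\sigma(\eta_{q,\:\partial_r}(s))=\sigma(q)$ and $\pi(\eta_{q,\:\partial_r}(s))=r_0+s$. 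Applying this with $q=\widetilde\gamma(p,\:\widetilde t(t))$ and $s=R(t)-r_0$ yields
\[
\pi(\gamma(p,\:t))=\pi\!\big(\eta_{\widetilde\gamma(p,\:\widetilde t(t)),\:\partial_r}(R(t)-r_0)\big)=r_0+(R(t)-r_0)=R(t),
\]
which is exactly the statement $\gamma(p,\:t)\in S_{R(t)}$.

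The argument is essentially a one-line consequence of (\ref{SoWar}); the only nontrivial ingredient is that the radial exponential map in a rotationally symmetric warped product preserves the angular coordinate and acts as translation in $r$, and this is immediate from the warped-product metric. No genuine obstacle arises, so no additional estimates or compactness arguments are needed.
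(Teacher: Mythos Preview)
Your proof is correct and follows the same approach as the paper: both simply read off the conclusion from the explicit formula (\ref{SoWar}) in Theorem \ref{TeEyU}. The paper's proof is a one-liner asserting that (\ref{SoWar}) immediately implies the inclusion, while you spell out in slightly more detail why the radial exponential map in the warped product sends $S_{r_0}$ to $S_{R(t)}$.
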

\begin{proof}
By Theorem \ref{TeEyU} the solution is
\begin{align*}
\gamma(p,\:t)=exp\Big(\widetilde\gamma(p,\:\widetilde{t}(t)),\:(R(t)-r_0)\partial_r\Big),
\end{align*}
this expression implies that $Im\:\gamma(\cdot,\:t)\subset S_{R(t)},\:\forall\:t\in[0,\:T)$.
\end{proof}

\begin{corollary}\label{CEquiFEsRota}
Let $\gamma:\mathbb{S}^1\times[0,\:T)\rightarrow M^3_w$ be the maximal solution of the problem (\ref{PVI}) with initial condition $\gamma_0$, then
\begin{align*}
\widetilde\gamma(p,\:\widetilde t):=exp\Big(\gamma(p,\:t(\widetilde{t})),\:-(R(t(\widetilde t))-r_0)\partial_r\Big),
\end{align*}
is the unique solution defined in $\mathbb{S}^1 \times [0,\:\widetilde{T})$ for the problem (\ref{PVIesfera}) with initial condition $\widetilde{\gamma}_0$. The function $t:[0,\:\widetilde{T})\rightarrow [0,\:T)$ is the inverse function of (\ref{CamTi}).
\end{corollary}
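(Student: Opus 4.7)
The plan is to observe that this corollary is essentially the inverse of the construction in Theorem \ref{TeEyU}, so the proof amounts to inverting the formula (\ref{SoWar}) and checking that the resulting $\widetilde\gamma$ is well-defined and coincides with the unique solution of (\ref{PVIesfera}).

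First I would invoke Theorem \ref{TeEyU} to write the maximal solution of (\ref{PVI}) as
\begin{align*}
\gamma(p,t) = \exp\Big(\widehat\gamma(p,\widetilde t(t)),\:(R(t)-r_0)\partial_r\Big),
\end{align*}
where $\widehat\gamma:\mathbb{S}^1\times[0,\widetilde T)\to S_{r_0}$ is the unique solution of (\ref{PVIesfera}) with initial condition $\widetilde\gamma_0$. Since Lemma \ref{LeCamTi} guarantees that $\widetilde t:[0,T)\to[0,\widetilde T)$ is a diffeomorphism, its inverse $t:[0,\widetilde T)\to[0,T)$ is smooth, and substituting $t = t(\widetilde t)$ gives
\begin{align*}
\gamma(p,t(\widetilde t)) = \exp\Big(\widehat\gamma(p,\widetilde t),\:(R(t(\widetilde t))-r_0)\partial_r\Big).
\end{align*}
Applying the inverse radial exponential (i.e.\ following the same radial geodesic backwards from $S_{R(t(\widetilde t))}$ to $S_{r_0}$) then yields
\begin{align*}
\exp\Big(\gamma(p,t(\widetilde t)),\:-(R(t(\widetilde t))-r_0)\partial_r\Big) = \widehat\gamma(p,\widetilde t),
\end{align*}
which is precisely the curve $\widetilde\gamma(p,\widetilde t)$ defined in the statement. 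Thus $\widetilde\gamma = \widehat\gamma$ coincides with the unique solution of (\ref{PVIesfera}) on $\mathbb{S}^1\times[0,\widetilde T)$.

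The only point to verify carefully is that the inverse radial exponential is well defined along the flow: because $\gamma(\cdot,t)\in\mathcal{A}$ by Corollary \ref{CEaU}, each point $\gamma(p,t)$ lies on the geodesic sphere $S_{R(t)}$ and is reached from $\widehat\gamma(p,\widetilde t)\in S_{r_0}$ along the unit radial geodesic in the warped product $(M^3_w,g_w)$; applying $\exp(\cdot,-(R(t)-r_0)\partial_r)$ simply traverses that geodesic in reverse, which is unambiguous since the pole $o$ is not met (we work in $\mathcal{A}$ and $R(t)>0$ as long as the flow exists). Uniqueness of $\widetilde\gamma$ is inherited from the uniqueness part of Theorem \ref{TAnOaks} applied on $(S_{r_0},g_{S_{r_0}},\psi)$. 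There is no real obstacle here: the corollary is essentially a change of viewpoint, packaging the two-way correspondence between (\ref{PVI}) and (\ref{PVIesfera}) established in Theorem \ref{TeEyU}.
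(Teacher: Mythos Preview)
Your proof is correct and follows exactly the approach of the paper, which simply cites Theorem \ref{TeEyU} and Lemma \ref{LeCamTi}; you have merely unpacked what that citation means by explicitly inverting the time diffeomorphism and the radial exponential in formula (\ref{SoWar}).
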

\begin{proof}
From Theorem \ref{TeEyU} and Lemma \ref{LeCamTi}.
\end{proof}

\begin{remark}
It is highly relevant that $T$ could be the maximal time of the problem (\ref{PVI}) and $\widetilde{T}$ could not be the maximal time of the problem (\ref{PVIesfera}).
\end{remark}

\begin{theorem}{\bf (Preservation of the embedded property)}
Let $\gamma:\mathbb{S}^1\times[0,\:T)\rightarrow M^3_w$ be a solution of the problem (\ref{PVI}) such that $\gamma_0$ is an embedded curve then, $\gamma(\cdot,\:t)$ is an embedded curve for all $t\in[0,\:T)$.
\end{theorem}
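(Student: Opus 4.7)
The plan is to reduce the statement directly to the already-established preservation of embeddedness for the $\psi$MCF on a surface (Theorem \ref{TeCSPPreEm}) via the explicit representation of the solution given in Theorem \ref{TeEyU}.

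First, I would invoke Theorem \ref{TeEyU} to write
\[
\gamma(p,t)=\exp\bigl(\widetilde\gamma(p,\widetilde t(t)),\,(R(t)-r_0)\partial_r\bigr),
\]
where $\widetilde\gamma$ solves the $\psi$MCF problem (\ref{PVIesfera}) in the surface $(S_{r_0},g_{S_{r_0}},\psi)$ with initial condition $\widetilde\gamma_0$. Since $\gamma_0\in\mathcal A$ is embedded and $\widetilde\gamma_0$ is precisely the presentation of $\gamma_0$ as a curve in the sphere $S_{r_0}$ (using the natural identification from the warped-product structure (\ref{DefVaAmbiRota})), $\widetilde\gamma_0$ is itself an embedded smooth curve on $S_{r_0}$. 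Then Theorem \ref{TeCSPPreEm} applied to the flow (\ref{PVIesfera}) guarantees that $\widetilde\gamma(\cdot,\widetilde t)$ is embedded for every $\widetilde t\in[0,\widetilde T)$.

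Next, I would show that for each fixed $t\in[0,T)$, the map
\[
\Phi_t:S_{r_0}\longrightarrow S_{R(t)},\qquad q\longmapsto \exp\bigl(q,(R(t)-r_0)\partial_r\bigr),
\]
is a diffeomorphism. In the rotationally symmetric coordinates $[0,\infty)\times\mathbb{S}^2$, the integral curves of $\partial_r$ are radial geodesics and $\Phi_t$ acts as the identity on the $\mathbb{S}^2$ factor composed with the shift $r_0\mapsto R(t)$, so it is manifestly smooth with smooth inverse. Consequently
\[
\gamma(\cdot,t)=\Phi_t\circ\widetilde\gamma(\cdot,\widetilde t(t)),
\]
and, since $\widetilde t(t)\in[0,\widetilde T)$ by Lemma \ref{LeCamTi} and $\widetilde\gamma(\cdot,\widetilde t(t))$ is embedded by the previous step, the composition with the diffeomorphism $\Phi_t$ is embedded as well, which is the required conclusion.

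There is no serious obstacle here: the whole argument is essentially the observation that the flow (\ref{PVI}) is, at each instant, conjugated by a radial diffeomorphism to the lower-dimensional $\psi$MCF on a fixed sphere, and embeddedness is invariant under diffeomorphic conjugation. The only point that deserves a line of justification is that $\Phi_t$ is genuinely a diffeomorphism between $S_{r_0}$ and $S_{R(t)}$, which is immediate from the warped-product form of $g_w$ together with $R(t)\in(0,\infty)$ for $t\in[0,T)$ (the latter being built into the fact that $\gamma(\cdot,t)\in\mathcal A$ by Corollary \ref{CEaU}).
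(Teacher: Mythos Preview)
Your proposal is correct and follows exactly the same route as the paper: the paper's proof reads in full ``From Theorem \ref{TeEyU} and Theorem \ref{TeCSPPreEm}'', and what you have written is precisely the unpacking of that sentence, making explicit that the radial map $\Phi_t$ is a diffeomorphism so that embeddedness of $\widetilde\gamma(\cdot,\widetilde t(t))$ transfers to $\gamma(\cdot,t)$.
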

\begin{proof}
From Theorem \ref{TeEyU} and Theorem \ref{TeCSPPreEm}.
\end{proof}

\begin{theorem}{\bf (Comparison Principle)} Let $\gamma_1:\mathbb{S}^1\times[0,\:T_1)\rightarrow M^3_w$,  $\gamma_2:\mathbb{S}^1\times[0,\:T_2)\rightarrow M^3_w$ be solutions of the initial value problem (\ref{PVI}) with initial conditions $\gamma_1(\cdot,\:0),\:\gamma_2(\cdot,\:0)\in\mathcal{A}$, respectively.  If $Im\:\gamma_1(\cdot,\:0)\cap Im\:\gamma_2(\cdot,\:0)=\emptyset$ then $Im\:\gamma_1(\cdot,\:t)\cap Im\:\gamma_2(\cdot,\:t)=\emptyset$ for all $t\in[0,\:\min\lbrace T_1,\:T_2\rbrace)$.
\end{theorem}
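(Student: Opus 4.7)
The plan is to dispatch the statement by a two-case reduction to Theorem \ref{TeCSPComPrin}, exploiting the explicit solution formula (\ref{SoWar}) from Theorem \ref{TeEyU}. For $i=1,2$, let $r_{0,i}:=\pi(\mathrm{Im}\,\gamma_i(\cdot,0))$ and let $R_i(t)$ denote the unique solution of the scalar ODE (\ref{SisRadio}) with $R_i(0)=r_{0,i}$. By Corollary \ref{CEaU}, $\mathrm{Im}\,\gamma_i(\cdot,t)\subset S_{R_i(t)}$ for every $t$ in the existence interval. I would now distinguish whether the two initial spheres coincide or not.

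\textbf{Case 1: $r_{0,1}\neq r_{0,2}$.} Since $B$ is smooth and hence locally Lipschitz, trajectories of the autonomous ODE (\ref{SisRadio}) cannot cross, so $R_1(t)\neq R_2(t)$ on $[0,\min\{T_1,T_2\})$. Therefore $S_{R_1(t)}\cap S_{R_2(t)}=\emptyset$, and the disjointness of $\mathrm{Im}\,\gamma_1(\cdot,t)$ and $\mathrm{Im}\,\gamma_2(\cdot,t)$ follows at once from Corollary \ref{CEaU}.

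\textbf{Case 2: $r_{0,1}=r_{0,2}=:r_0$.} Uniqueness for (\ref{SisRadio}) forces $R_1\equiv R_2\equiv R$, so the time reparametrization (\ref{CamTi}) agrees for both flows; denote it by $\widetilde t(t)$. By Corollary \ref{CEquiFEsRota}, the associated curves $\widetilde\gamma_i$ are the unique solutions of the spherical problem (\ref{PVIesfera}) on the common 2-manifold with density $(S_{r_0},g_{S_{r_0}},\psi)$. For each fixed $t$, the radial map $\exp(\cdot,(R(t)-r_0)\partial_r)\colon S_{r_0}\to S_{R(t)}$ is a diffeomorphism (since $R(t)>0$ throughout the existence interval), and it sends $\mathrm{Im}\,\widetilde\gamma_i(\cdot,\widetilde t(t))$ onto $\mathrm{Im}\,\gamma_i(\cdot,t)$. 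At $t=0$ this map is the identity, so the hypothesis $\mathrm{Im}\,\gamma_1(\cdot,0)\cap\mathrm{Im}\,\gamma_2(\cdot,0)=\emptyset$ transfers to $\mathrm{Im}\,\widetilde\gamma_1(\cdot,0)\cap\mathrm{Im}\,\widetilde\gamma_2(\cdot,0)=\emptyset$. I would then invoke Theorem \ref{TeCSPComPrin} on $(S_{r_0},g_{S_{r_0}},\psi)$ to obtain disjointness of $\mathrm{Im}\,\widetilde\gamma_1(\cdot,\widetilde t)$ and $\mathrm{Im}\,\widetilde\gamma_2(\cdot,\widetilde t)$ for all $\widetilde t\in[0,\min\{\widetilde T_1,\widetilde T_2\})$, and push this back to the $\gamma_i$ via the radial diffeomorphism and the time diffeomorphism $\widetilde t(\cdot)$ from Lemma \ref{LeCamTi}.

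\textbf{Main obstacle.} None of the steps is technically demanding; the analytic work is entirely delegated to Theorem \ref{TeCSPComPrin} and the structural Theorem \ref{TeEyU}. The one point worth verifying with care is that Case 2 genuinely produces a common ambient surface and a single common time parameter for the two auxiliary spherical flows — this is precisely what uniqueness of the ODE (\ref{SisRadio}) supplies, and without it the 2-dimensional comparison theorem could not be quoted directly.
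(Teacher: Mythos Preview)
Your argument is correct and is exactly the unpacking of the paper's own proof, which consists of the single line ``From Theorem \ref{TeEyU} and Theorem \ref{TeCSPComPrin}.'' Your two-case split (different initial radii handled by ODE uniqueness, equal initial radii reduced to the spherical comparison principle via the explicit solution formula) is precisely the content hidden behind that citation.
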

\begin{proof}
From Theorem \ref{TeEyU} and Theorem \ref{TeCSPComPrin}.
\end{proof}

%%%%%%%%%%%%%%%%%%%%%%%%%%%%%%%%%%%%%%%%%%%%%%%%
%%%%%%%%%%%%%%%%%%%%%%%%%%%%%%%%%%%%%%%%%%%%%%%%
%%%%%%%%%%%%%%%%%%%%%%%%%%%%%%%%%%%%%%%%%%%%%%%%
%%%%%%%%%%%%%%%%%%%%%%%%%%%%%%%%%%%%%%%%%%%%%%%%

\section{Finite maximal time}

In this section we analyse the different situations when the maximal time of the solution is finite, obtaining a proof for Theorem A.

\begin{theorem}{\label{TeTieFinCo}}
Let $\gamma_0\in\mathcal{A}$ and let $\gamma:\mathbb{S}^1\times[0,\:T)\rightarrow M^3_w$ be the unique maximal solution for the initial value problem (\ref{PVI}) with $\gamma_0$ as initial condition. If the maximal time $T$ is finite then, the solution collapses to a point.
\end{theorem}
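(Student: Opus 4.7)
The plan is to read everything off the explicit representation (\ref{SoWar}) from Theorem \ref{TeEyU}: $\gamma(p,t)=\exp\big(\widetilde\gamma(p,\widetilde{t}(t)),(R(t)-r_0)\partial_r\big)$. This factors the flow into the scalar radial ODE $R'=-B(R)$ and the spherical $\psi$MCF $\widetilde\gamma$ on $(S_{r_0},g_{S_{r_0}},\psi)$, so the limit behaviour of $\gamma$ at the singular time $T$ is determined by the joint behaviour of $R(t)$ and $\widetilde\gamma(\cdot,\widetilde{t})$. Since $R$ solves a first order autonomous scalar ODE on $(0,\infty)$ with locally Lipschitz right-hand side $-B$, $R$ is monotone on each interval between consecutive zeros of $B$, so the limit $R(T^-):=\lim_{t\to T^-}R(t)$ exists in $[0,\infty]$.

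The argument then splits according to $R(T^-)$. If $R(T^-)\in(0,\infty)$, then $R$ is bounded away from $0$ and $\infty$ on $[0,T)$ and extends smoothly past $T$ by local Lipschitz theory; moreover, $\widetilde{t}'(t)=(w(r_0)/w(R(t)))^2$ is bounded above and below on $[0,T)$, so $\widetilde{t}(T^-)$ is finite and $\widetilde{t}$ is a diffeomorphism $[0,T)\to[0,\widetilde{t}(T^-))$ by Lemma \ref{LeCamTi}. Were $\widetilde{t}(T^-)<\widetilde{T}$, then $\widetilde\gamma$ would also extend smoothly and (\ref{SoWar}) would produce a smooth extension of $\gamma$ past $T$, contradicting maximality of $T$. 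Hence $\widetilde{t}(T^-)=\widetilde{T}$, and Theorem \ref{Zhu} applied to $\widetilde\gamma$ yields collapse to a round point $p^*\in S_{r_0}$; composing with $\exp(\,\cdot\,,(R(T^-)-r_0)\partial_r)$ then shows $\gamma(\cdot,t)$ collapses to a single point of $S_{R(T^-)}$.

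If instead $R(T^-)=0$, Corollary \ref{CEaU} places $\mathrm{Im}\,\gamma(\cdot,t)\subset S_{R(t)}$, which contracts to the pole $o$ as $R(t)\to 0$ regardless of what the spherical factor does; so the curve collapses to $o$. The remaining case $R(T^-)=\infty$ is treated analogously to the first: the same maximality argument forces $\widetilde{t}(T^-)=\widetilde{T}$, and Theorem \ref{Zhu} again gives collapse of the spherical factor to a round point, so $\gamma$ collapses to a point of the sphere at infinity in the compactification $[0,\infty]\times\mathbb{S}^2$.

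The step I expect to require the most care is the maximality argument that rules out $\widetilde{t}(T^-)<\widetilde{T}$ in the first case: one has to show rigorously that smooth extendability of both $R$ past $T$ and $\widetilde\gamma$ past $\widetilde{t}(T^-)$ really does yield a smooth extension of $\gamma$ past $T$ via (\ref{SoWar}), using smoothness of $\exp$ and the fact, guaranteed by (\ref{SoWar}), that $\gamma(\cdot,t)\in\mathcal{A}$ for every $t\in[0,T)$. The edge case $R(T^-)=\infty$ is the other subtle point, since non-compactness of $M^3_w$ forces one to read "collapse to a point" in a suitable compactification—a distinction that becomes substantive in Theorems B and C below.
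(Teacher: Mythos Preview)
Your approach is essentially the paper's: exploit the factorization (\ref{SoWar}) and case-split on the limit of $R(t)$ as $t\to T$. For $R(T^-)\in(0,\infty)$ and $R(T^-)=0$ your arguments match the paper's; you spell out the maximality argument more carefully, and you cite Theorem~\ref{Zhu} where the paper uses only Theorem~\ref{TAnOaks}, but for this statement merely ``collapses to a point'' is needed, so that is harmless.

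The genuine gap is your treatment of $R(T^-)=\infty$. The maximality argument does \emph{not} transfer: if $R$ itself blows up at $T$, then the radial ODE already fails to extend, so the non-extendability of $\gamma$ past $T$ yields no information whatsoever about $\widetilde\gamma$. In particular, nothing forces $\widetilde t(T^-)$ to equal the maximal time of the spherical flow, and Theorem~\ref{Zhu} does not apply. A concrete obstruction: take $w(r)=r$, $\psi\equiv0$, $\varphi(r)=-r^{3}/3$; then $B(r)=\tfrac{1}{r}-r^{2}$, so for $r_0>1$ the equation $R'=-B(R)\sim R^{2}$ blows up in finite time, while a great circle $\widetilde\gamma_0$ is stationary under the spherical CSF. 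The limit of $\gamma$ is then a \emph{curve} on the sphere at infinity, not a point, even in the compactification $[0,\infty]\times\mathbb{S}^2$. This also shows that reading ``collapse to a point'' in a compactified sense does not rescue the argument.

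The paper sidesteps the issue entirely: it asserts in one line that $T<\infty$ forces $\lim_{t\to T}R(t)<\infty$ and then treats only the two bounded cases $R(T)>0$ and $R(T)=0$. You should follow the same route---either justify that finiteness assertion or record it as an implicit hypothesis---rather than attempt the case $R(T^-)=\infty$, which in any event lies outside what the theorem actually claims (a point of $M^3_w$, not of a compactification).
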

\begin{proof}
We note that the continuous function $R(t)$ is either strictly decreasing, strictly increasing or constant. Thus, the $\lim_{t\rightarrow T}R(t)$ exists and if $T<\infty$ then, $\lim_{t\rightarrow T}R(t)<\infty$. Now, we have two possibilities:
\begin{itemize}
\item $\lim_{t\rightarrow T}R(t)=R(T)>0$: In this situation $\widetilde{T}<\infty$ and (\ref{SoWar}) only can have problems by part $\widetilde\gamma(\cdot,\:\widetilde t(t))$. Then, the maximal time for (\ref{PVIesfera}) is finite, moreover it is exactly $\widetilde{T}$, and by Theorem \ref{TAnOaks} the flow $\widetilde\gamma$ collapses to a point. Therefore, the flow $\gamma$ collapses to a point.
\item $\lim_{t\rightarrow T}R(t)=R(T)=0$: By Corollary \ref{CEaU} $Im\:\gamma(\cdot,\:t)\subset S_{R(t)},\:\forall\:t\in[0,\:T)$ then, the flow collapses to the pole $o$.
\end{itemize}
\end{proof}
\noindent Interestingly, we notice that in the second part of the proof, $\widetilde{T}$ may not be the maximal time of the flow $\widetilde{\gamma}$.

Now that we know that the solution collapses to a point, let's analyse which is the shape of the singularity.

\begin{theorem}\label{TeTieFinSin}
Let $\gamma_0\in\mathcal{A}$ and let $\gamma:\mathbb{S}^1\times[0,\:T)\rightarrow M^3_w$ be the unique maximal solution for the initial value problem (\ref{PVI}) with $\gamma_0$ as initial condition whose maximal time $T$ is finite. Then:
\begin{itemize}
\item[i)]  If the $\lim_{t\rightarrow T}R(t)\neq0$ the curve collapses to a spherical round point in the geodesic sphere $S_{R(T)}$.
\item[ii)] If the $\lim_{t\rightarrow T}R(t)=0$, that is, the curve collapses to the pole $o$ of the manifold $M^3_w$, and there exists $\widetilde{\varphi}\in C^1([0,\:\infty))$ such that $\widetilde\varphi\vert_{(0,\:\infty)}=\varphi$ then, a blow-up centered at the pole $o$ gives a limit flow by the $\psi$MCF in $(S_{r_0},\:g_{S_{r_0}},\:\psi)$ that $C^\infty$-subconverges, after a reparametrization of the curves, to a closed $\psi$-minimal curve.
\end{itemize}
\end{theorem}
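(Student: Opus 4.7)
The plan is to reduce both parts to statements about the spherical flow $\widetilde\gamma$ on the fixed sphere $(S_{r_0}, g_{S_{r_0}}, \psi)$ via the representation in Theorem \ref{TeEyU}, namely $\gamma(p,t) = \exp(\widetilde\gamma(p, \widetilde t(t)), (R(t)-r_0)\partial_r)$. I would then invoke Zhu's Theorem \ref{Zhu} for case (i) and the convergence Theorem \ref{TViMi} for case (ii).

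For part (i), because $R(T) > 0$ the radial map $x \mapsto \exp(x, (R(t)-r_0)\partial_r)$ remains a smooth conformal diffeomorphism $S_{r_0} \to S_{R(t)}$ with factor $w(R(t))/w(r_0)$, uniformly as $t \to T$. As was already observed in the proof of Theorem \ref{TeTieFinCo}, the singularity of $\gamma$ in this regime is forced to come from $\widetilde\gamma$ reaching its own maximal time $\widetilde T < \infty$, so I would apply Theorem \ref{Zhu} to conclude that $\widetilde\gamma$ collapses to a round point on $S_{r_0}$. Since the conformal factor $w(R(t))/w(r_0) \to w(R(T))/w(r_0) > 0$ stays positive and smooth at $t = T$, roundness is preserved under the radial transport, which yields collapse of $\gamma(\cdot,t)$ to a round point on $S_{R(T)}$.

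For part (ii) my first task is to prove that $R(t) \to 0$ forces $\widetilde T = \infty$. Near the pole, smoothness of $M^3_w$ gives $w(r) = r + O(r^3)$, and the hypothesis that $\varphi$ extends to $\widetilde\varphi \in C^1([0,\infty))$ keeps $\varphi'$ bounded near $0$, so $B(r) = 1/r + O(1)$ as $r \to 0^+$. Solving (\ref{SisRadio}) asymptotically then gives $R(t)^2 \sim 2(T-t)$, whence the integrand $(w(r_0)/w(R(t)))^2 \sim w(r_0)^2/(2(T-t))$ in (\ref{CamTi}) fails to be integrable, and so $\widetilde T = +\infty$; consequently the spherical flow $\widetilde\gamma$ exists for all time.

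It then remains to set up the blow-up interpretation and apply Theorem \ref{TViMi}. By Corollary \ref{CEquiFEsRota}, $\widetilde\gamma$ is the radial pull-back of $\gamma$ from $S_{R(t)}$ to $S_{r_0}$; I would argue that, modulo the conformal factor $w(R(t))/w(r_0) \sim R(t)/r_0$, this coincides with the parabolic rescaling at the pole, and that the $C^1$-extension of $\varphi$ forces the radial part of the density to admit a non-singular limit (which, being a constant, drops out of the geodesic curvature with density), so the rescaled limit flow is genuinely the $\psi$MCF on $(S_{r_0}, g_{S_{r_0}}, \psi)$. Since this target is a compact smooth surface with smooth density, conditions (\ref{MiVi16Con1}) and (\ref{MiVi16Con2}) hold automatically; applying Theorem \ref{TViMi} to $\widetilde\gamma$ along a sequence $\widetilde t_k \to \infty$ then furnishes, after a reparametrization, a $C^\infty$-subconvergent subsequence whose limit is a closed $\psi$-minimal curve on $S_{r_0}$. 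The main obstacle I expect is the rigorous blow-up identification: verifying that the radial reparametrization supplied by Theorem \ref{TeEyU} really agrees with the geometric parabolic blow-up at the pole, and that the $C^1$-extension hypothesis on $\varphi$ is exactly what is needed for the rescaled density to descend to just $\psi$ on the limit sphere.
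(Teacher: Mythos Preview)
Your proposal is correct and follows essentially the same strategy as the paper: both cases are reduced to the spherical flow $\widetilde\gamma$ via Theorem~\ref{TeEyU}, Theorem~\ref{Zhu} is applied for case~(i), and for case~(ii) one establishes $\widetilde T=\infty$ and then invokes Theorem~\ref{TViMi}. The only notable difference is computational, in how $\widetilde T=\infty$ is obtained: you use the ODE asymptotics $R(t)^2\sim 2(T-t)$ directly and observe that the integrand in~(\ref{CamTi}) behaves like $1/(T-t)$, whereas the paper performs the change of variable $x=R(t)$ to rewrite $\widetilde T$ (up to a positive factor) as the spatial integral $\int_0^{r_0}\frac{dx}{B(x)\,x^2}$ and then compares it with $\int_0^{r_0}\frac{dx}{x}$ via L'H\^opital. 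Both routes rest on the same asymptotics $w(r)\sim r$ and $B(r)\sim 1/r$ near the pole, the latter being exactly where the $C^1$-extension hypothesis on $\varphi$ is used; your version is slightly more direct, while the paper's change of variables makes the role of $B$ more visible. Finally, your concern about the blow-up identification is unnecessary in this setting: the paper's ``blow-up'' is simply \emph{defined} as the radial pull-back $\exp_{\gamma(\cdot,t)}\big((r_0-R(t))\partial_r\big)$, which by Corollary~\ref{CEquiFEsRota} is exactly $\widetilde\gamma$, so no separate reconciliation with a standard parabolic rescaling is needed.
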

\begin{proof}
By Theorem \ref{TeTieFinCo} the solution collapses to a point.
\begin{itemize}
\item[Case i)]  If $T<\infty$ and the $\lim_{t\rightarrow T}R(t)\neq0$ then, the maximal time of the solution of the problem (\ref{PVIesfera}), with initial condition $\widetilde\gamma_0$, is finite due to the relation between the flows. This, together with Theorem \ref{Zhu}, prove i). We note that, in this situation, the maximal time of (\ref{PVIesfera}) is $\widetilde{T}$.
%%%%%%%%%%%%%%%%%%%%%%%%%%%%%%%%%%%%%%%%
\item[Case ii)] We need to study if $\widetilde{T}$ is finite or infinite. For this, we note that:
\begin{align}\label{PSinOri1}
\widetilde{T}=\int_0^T\Big(\dfrac{w(r_0)}{w(R(t))}\Big)^2\:dt\sim \int_0^T\dfrac{1}{R(t)^2}\:dt
\end{align}
we shall check this fact:
\begin{align*}
\lim_{t\rightarrow T} \dfrac{ \Big(\dfrac{w(r_0)}{w(R(t))}\Big)^2 }{ \dfrac{1}{R(t)^2} }
&=w(r_0)^2\lim_{t\rightarrow T} \dfrac{R(t)^2}{w(R(t))^2}
=w(r_0)^2\lim_{t\rightarrow T} \dfrac{2R(t)R'(t)}{2w(R(t))w'(R(t))R'(t)}
\\
&=w(r_0)^2\lim_{t\rightarrow T} \dfrac{R(t)}{w(R(t))w'(R(t))}
=w(r_0)^2\lim_{t\rightarrow T} \dfrac{R(t)}{w(R(t))}
\\
&=w(r_0)^2\lim_{t\rightarrow T} \dfrac{R'(t)}{w'(R(t))R'(t)}
=w(r_0)^2\lim_{t\rightarrow T} \dfrac{1}{w'(R(t))}
\\
&=w(r_0)^2\in(0,\:\infty),
\end{align*}
where we have used $w(0)=0$, $w'(0)=1$ and the L'Hôpital's rule twice.

Now, we are going to study the second integral of (\ref{PSinOri1}):
\begin{align*}
\int_0^T&\dfrac{1}{R(t)^2}\:dt=\int_0^T\dfrac{R'(t)}{R'(t)R(t)^2}\:dt
=\int_0^T\dfrac{R'(t)}{\Big(-\dfrac{w'(R(t))}{w(R(t))}-\varphi'(R(t)) \Big)R(t)^2}\:dt
\\
&=\int_{R(0)}^{R(T)}\dfrac{1}{\Big(-\dfrac{w'(x)}{w(x)}-\varphi'(x) \Big)x^2}\:dx
=\int_{r_0}^{0}\dfrac{1}{\Big(-\dfrac{w'(x)}{w(x)}-\varphi'(x) \Big)x^2}\:dx
\\&=\int_{0}^{r_0}\dfrac{1}{\Big(\dfrac{w'(x)}{w(x)}+\varphi'(x) \Big)x^2}\:dx,
\end{align*}
we remark that $R'(t)<0$ for all $t\in[0,\:T)$, as otherwise the hypothesis $\lim_{t\rightarrow T}R(t)=0$ is impossible.
Going back to the integral:
\begin{align*}
\lim_{x\rightarrow 0^+}&\dfrac{\dfrac{1}{\Big(\dfrac{w'(x)}{w(x)}+\varphi'(x) \Big)x^2}}{\dfrac{1}{x}}
=\lim_{x\rightarrow 0^+}\dfrac{1}{\Big(\dfrac{w'(x)}{w(x)}+\varphi'(x) \Big)x}
=\lim_{x\rightarrow 0^+}\dfrac{1}{\dfrac{x}{w(x)}}
\\
&=\lim_{x\rightarrow 0^+}\dfrac{w(x)}{x}
=\lim_{x\rightarrow 0^+}w'(x)=1\in(0,\:\infty),
\end{align*}
we have used the L'Hôpital's rule and also that $\varphi$ has a $C^1$-extension to $[0,\:\infty)$, so
\begin{align}\label{PSinOri2}
\int_{0}^{r_0}\dfrac{1}{\Big(\dfrac{w'(x)}{w(x)}+\varphi'(x) \Big)x^2}\:dx\sim\int_{0}^{r_0}\dfrac{1}{x}\:dx.
\end{align}
Therefore, from (\ref{PSinOri1}) and (\ref{PSinOri2}), $\widetilde{T}=\infty$ and if we perform a rescaling on the singularity we obtain exactly the flow $\widetilde{\gamma}$ as rescaling flow of $\gamma$, so the rescaling flow subconverges, in the sense of Theorem \ref{TViMi}, to a closed $\psi$-minimal curve in $(S_{r_0},\:g_{S_{r_0}},\:\psi)$ by this theorem. Note that the rescaling flow is given by
\begin{align*}
\exp_{\gamma(\cdot,\:t)}\big((r_0-\:R(t))\partial_r\big),
\end{align*}
and this flow is the flow $\widetilde{\gamma}$ by Corollary \ref{CEquiFEsRota}.
\end{itemize}
\end{proof}

\begin{remark}
The case ii) of the previous theorem is not true if the density $\varphi$ does not have a $C^0$-extension to $[0,\:\infty)$. For example, if we consider $\varphi(r):=-\ln w(r)-\dfrac{1}{r}$ then:
\begin{align*}
\widetilde{T}\sim \int_{0}^{r_0}\dfrac{1}{\Big(\dfrac{w'(x)}{w(x)}+\varphi'(x) \Big)x^2}\:dx
=\int_{0}^{r_0}\dfrac{1}{\dfrac{1}{x^2}x^2}\:dx=r_0<\infty
\end{align*}
Therefore, if the solution collapses to the origin, the rescaling flow of $\gamma$ faces two possibilities: either converges to the curve $\widetilde\gamma(\cdot,\widetilde{T})$, if $\widetilde{T}$ is not the maximal time of $\widetilde\gamma$, or, if $\widetilde T$ is the maximal time of $\widetilde\gamma$, we need to make a new rescaling in the sphere and with this second rescaling we obtain that the curve converges to a round point by Theorem \ref{Zhu}.
\end{remark}

\section{Infinite maximal time }

In this section we study the situation in which the maximal time  of the solution for the problem (\ref{PVI}) is infinite, obtaining a proof for Theorem B and another for Theorem C. In this situation, there are two delicate scenarios: when the solution goes to infinity and when the solution collapses to the pole. This fact motivates splitting the study in three cases: when the solution collapses to the pole, when the solution is in a bounded region $0<C_1\leq R(t)\leq C_2$ and when the solution goes to infinity.

First, we are going to tackle the following question: Could the solution collapse to the pole in infinite time?

\begin{proposition}\label{PropoColapOrigenConExtenTF}
Let $\gamma_0\in\mathcal{A}$ and let  $\gamma:\mathbb{S}^1\times[0,\:T)\rightarrow M^3_w$ be the unique maximal solution of the initial value problem (\ref{PVI}) with $\gamma_0$ as initial condition. If this solution collapses to the pole in the maximal time and there exists $\widetilde{\varphi}\in C^1([0,\:\infty))$ such that $\widetilde\varphi\vert_{(0,\:\infty)}=\varphi$ then, the maximal time of the flow is finite.
\end{proposition}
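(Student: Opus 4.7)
The plan is to reduce the statement entirely to an ODE analysis on the radial function $R(t)$. By Theorem~\ref{TeEyU} together with Corollary~\ref{CEaU}, the curve $\gamma(\cdot,t)$ lies on the sphere $S_{R(t)}$, so the hypothesis that the flow collapses to the pole as $t \to T$ translates to $\lim_{t \to T} R(t) = 0$, where $R$ solves the autonomous ODE $R'(t) = -B(R(t))$ with $R(0) = r_0$.

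First I would argue that $R$ is strictly decreasing on $[0, T)$ and that $B(R(t)) > 0$ throughout. Indeed, were $B(R(t^{*})) = 0$ at some $t^{*} \in [0, T)$, uniqueness for the autonomous ODE would force $R$ to be constant on $[t^{*}, T)$, contradicting $\lim_{t \to T} R(t) = 0$. Hence $R:[0,T) \to (0, r_0]$ is a $C^1$-diffeomorphism onto its image, and the substitution $u = R(t)$, $du = -B(R(t))\,dt$, yields
$$
T \,=\, \int_0^T dt \,=\, \int_0^{r_0} \frac{du}{B(u)}.
$$

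The proposition therefore reduces to showing that this improper integral converges at $u = 0$. Since $w$ is smooth with $w(0) = 0$ and $w'(0) = 1$, we may write $w(r) = r + O(r^2)$ near the origin, so that $w'(r)/w(r) = 1/r + O(1)$ as $r \to 0^+$. The hypothesis provides a $C^1$-extension $\widetilde{\varphi}$ of $\varphi$ to $[0, \infty)$, and hence $\varphi'(r) = \widetilde{\varphi}'(r)$ remains bounded as $r \to 0^+$. Consequently $B(r) = 1/r + O(1)$ near $0$, so $1/B(u) \sim u$ there and $\int_0^{r_0} du/B(u) < \infty$, yielding $T < \infty$.

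I do not expect serious obstacles; the main technical point is the passage from the geometric statement "collapses to the pole" to the one-dimensional statement $R(t) \to 0$, which is precisely Corollary~\ref{CEaU}. The only subtlety is ensuring positivity of $B$ on the range of $R$ so that the change of variables is legitimate and the integral is a bona fide time, and this is handled by the autonomous-ODE uniqueness argument sketched above. Note that the argument also explains the remark following the theorem: if $\varphi$ fails to have a $C^1$-extension, the asymptotics $B(r) \sim 1/r$ may break down, and $\int_0^{r_0} du/B(u)$ can diverge, allowing the pole to be reached only in infinite time.
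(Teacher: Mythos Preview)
Your proof is correct. The paper reaches the same conclusion by a slightly different (and marginally shorter) route: instead of converting $T$ into the integral $\int_0^{r_0} du/B(u)$ and bounding the integrand, it observes directly that the limits $w(R(t))\to 0$, $w'(R(t))\to 1$, and $\varphi'(R(t))\to\widetilde\varphi'(0)$ force $R'(t)=-B(R(t))\to-\infty$ as $t\to T$, so in particular $R'(t)\le -C$ on some tail $[t^{\ast},T)$; integrating this differential inequality gives $C(T-t^{\ast})\le R(t^{\ast})<\infty$. Your integral formula is a touch more informative (it identifies $T$ exactly) and mirrors the change-of-variables computation the paper itself carries out later in the proof of Theorem~\ref{TeTieFinSin}~ii); the paper's argument is slightly more economical in that it only needs the behaviour of $R'$ near $T$ and so avoids the separate check that $B>0$ on all of $(0,r_0]$.
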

\begin{proof}
As $\lim_{t\rightarrow T}R(t)=0$ then $\lim_{t\rightarrow T}w(R(t))=0$, $\lim_{t\rightarrow T}w'(R(t))=1$ and $\lim_{t\rightarrow T}\varphi'(R(t))=\varphi'(0)$, the last equality relies on the hypothesis about $\varphi$. So there are $t^\star\in[0,\:T)$ and $C>0$ such that 
\begin{align*}
R'(t)\leq -C,\:\forall\:t\in[t^\star,\:T).
\end{align*}
If we integrate this inequality, we obtain:
\begin{align*}
R(t_2)-R(t_1)\leq -C(t_2-t_1),\:\forall\:t_2\geq t_1\geq t^\star,
\end{align*}
so
\begin{align*}
C(t_2-t_1)\leq R(t_2)+C(t_2-t_1)\leq R(t_1)<\infty,\:\forall\:t_2\geq t_1\geq t^\star,
\end{align*}
if we take $t_1=t^\star$ and $t_2\rightarrow T$:
\begin{align*}
C(T-t^\star)\leq R(t^\star)<\infty,
\end{align*}
therefore $T<\infty$.
\end{proof}

Having this in mind, the answer to the question is: No if the density $\varphi$ has $C^1$-extension to $[0,\:\infty)$. However, if the density $\varphi$ does not have a $C^1$-extension to $[0,\:\infty)$, the solution could collapse to the pole in infinite time. For example, let $(\mathbb{R}^3,\:g_{\mathbb{R}^3},\:\xi)$ be the ambient manifold with $\varphi(r)=-\ln(r)+\dfrac{1}{2}r^2$. In this situation, the derivative of $R(t)$ satisfies:
\begin{align*}
R'(t)&=-\dfrac{1}{R(t)}-\varphi'(R(t))=-\dfrac{1}{R(t)}+\dfrac{1}{R(t)}-R(t)=-R(t)
\\
\Rightarrow R(t)&=r_0 e^{-t}.
\end{align*}
Then, if the flow collapses to the pole, the maximal time of the solution is infinite.

Now, we continue studying the other cases. For the case where the solution is in a bounded region, we need some previous lemmas. Let $s$ be the arc length parameter of $\gamma(\cdot,\:t)$ and let $\ws$ be the arc length parameter of $\wgamma$.
  
\begin{lemma}{\label{LeDeCur}}
\begin{itemize}
\item[]
\item $\partial_s=\dfrac{w(r_0)}{w(R(t))}\partial_{\widetilde{s}}$,
\item $\partial_s^n\Big(k_{\gamma(\cdot,\:t),\:S_{R(t)},\:\psi}\Big)=\Big(\dfrac{w(r_0)}{w(R(t))}\Big)^{n+1}\partial_{\widetilde{s}}^n\Big(k_{\widetilde{\gamma}(\cdot,\:\widetilde{t}),\:S_{r_0},\:\psi}\Big)$.
\end{itemize}
\end{lemma}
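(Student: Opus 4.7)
The plan is to deduce both items from two facts already available in the excerpt: the conformal relation $g_{S_{R(t)}}=\dfrac{w^2(R(t))}{w^2(r_0)}\,g_{S_{r_0}}$ between the metrics on the geodesic spheres (used in the proof of Lemma~\ref{LeReCurGeoConDen}), and the scaling identity of Lemma~\ref{LeReCurGeoConDen} itself applied to $\gamma(\cdot,t)$ and $\widetilde\gamma(\cdot,\widetilde t(t))$.

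First I would establish the relation between the arc length parameters. Since $\gamma(p,t)=\exp(\widetilde\gamma(p,\widetilde t(t)),(R(t)-r_0)\partial_r)$, for a fixed $t$ the curves $\gamma(\cdot,t)$ and $\widetilde\gamma(\cdot,\widetilde t(t))$ share the same underlying map into $\mathbb{S}^2$, only living in different geodesic spheres. By the conformal relation above, $ds^2=g_{S_{R(t)}}(\gamma'(p),\gamma'(p))\,dp^2=\dfrac{w^2(R(t))}{w^2(r_0)}\,d\widetilde s^2$, so $ds=\dfrac{w(R(t))}{w(r_0)}\,d\widetilde s$ and consequently $\partial_s=\dfrac{w(r_0)}{w(R(t))}\,\partial_{\widetilde s}$. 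This proves the first bullet.

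Next I would treat the $n=0$ case of the second bullet. Applying Lemma~\ref{LeReCurGeoConDen} with $\gamma=\gamma(\cdot,t)\in\mathcal A$, $r=R(t)$ and $\hat\gamma=\widetilde\gamma(\cdot,\widetilde t(t))$ gives
\begin{equation*}
\vec k_{\gamma(\cdot,t),\,S_{R(t)},\,\psi}=\dfrac{w^2(r_0)}{w^2(R(t))}\,\vec k_{\widetilde\gamma(\cdot,\widetilde t),\,S_{r_0},\,\psi}.
\end{equation*}
To pass to scalar curvatures, let $\nu_0$ be a unit normal of $\widetilde\gamma(\cdot,\widetilde t)$ in $(S_{r_0},g_{S_{r_0}})$; the conformal factor forces $\nu:=\dfrac{w(r_0)}{w(R(t))}\,\nu_0$ to be a unit normal of $\gamma(\cdot,t)$ in $(S_{R(t)},g_{S_{R(t)}})$. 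Writing $\vec k_\psi=k_\psi\,\nu$ on each side and equating coefficients yields $k_{\gamma(\cdot,t),\,S_{R(t)},\,\psi}=\dfrac{w(r_0)}{w(R(t))}\,k_{\widetilde\gamma(\cdot,\widetilde t),\,S_{r_0},\,\psi}$, which is the claim for $n=0$.

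Finally, the induction step is essentially free: the factor $\dfrac{w(r_0)}{w(R(t))}$ depends only on $t$, hence is constant along the curve at fixed time, so iterating the first bullet gives $\partial_s^n=\left(\dfrac{w(r_0)}{w(R(t))}\right)^n\partial_{\widetilde s}^n$ as operators on functions of $p$. Composing with the $n=0$ identity produces the announced exponent $n+1$. No serious obstacle is expected; the only point that requires a little care is the conversion between vector and scalar curvatures in the $n=0$ step, since the unit normals in the two metrics differ by exactly the conformal factor needed to shift one power of $w(r_0)/w(R(t))$ between the two sides.
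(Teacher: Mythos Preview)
Your proof is correct and follows essentially the same route as the paper: both derive the first bullet from the conformal relation $g_{S_{R(t)}}=\dfrac{w^2(R(t))}{w^2(r_0)}g_{S_{r_0}}$, obtain the $n=0$ case of the second bullet from Lemma~\ref{LeReCurGeoConDen} by converting the vector identity to a scalar one via the conformal rescaling of the unit normal, and then iterate using that the factor depends only on $t$. The only cosmetic difference is that the paper writes the normal conversion as $\nu/\vert\nu\vert_{S_{r_0}}$ rather than introducing $\nu_0$ explicitly.
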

\begin{proof}
The relation between $ds$ and $d\widetilde{s}$ is
\begin{align*}
ds&=\vert\partial_\alpha\gamma(\alpha,\:t) \vert_{g_{S_{R(t)}}} d\alpha=\dfrac{w(R(t))}{w(r_0)}\vert\partial_\alpha\gamma(\alpha,\:t) \vert_{g_{S_{r_0}}} d\alpha=\dfrac{w(R(t))}{w(r_0)}d\ws.
\end{align*}
Therefore
\begin{align*}
\partial_s=\dfrac{w(r_0)}{w(R(t))}\partial_{\widetilde{s}}.
\end{align*}
On the other hand, the relation between the mean curvature vectors with density is given from Lemma \ref{LeReCurGeoConDen}  by
\begin{align*}
\vec{k}_{\gamma(\cdot,\:t),\:S_{R(t)},\:\psi}
&=\dfrac{w^2(r_0)}{w^2(R(t))}\vec{k}_{\wgamma,\:S_{r_0},\:\psi},
\end{align*}
then
\begin{align*}
k_{\gamma(\cdot,\:t),\:S_{R(t)},\:\psi}\:\nu
&=\dfrac{w^2(r_0)}{w^2(R(t))} k_{\wgamma,\:S_{r_0},\:\psi}\dfrac{\nu}{\vert \nu\vert_{S_{r_0}}},
\\
k_{\gamma(\cdot,\:t),\:S_{R(t)},\:\psi}\:\nu
&=\dfrac{w^2(r_0)}{w^2(R(t))} k_{\wgamma,\:S_{r_0},\:\psi}\dfrac{\nu}{\dfrac{w(r_0)}{w(R(t))}},
\\
k_{\gamma(\cdot,\:t),\:S_{R(t)},\:\psi}\:\nu
&=\dfrac{w(r_0)}{w(R(t))} k_{\wgamma,\:S_{r_0},\:\psi}\:\nu,
\\
k_{\gamma(\cdot,\:t),\:S_{R(t)},\:\psi}
&=\dfrac{w(r_0)}{w(R(t))} k_{\wgamma,\:S_{r_0},\:\psi},
\end{align*}
and we obtain that
\begin{align*}
\partial_s^n\Big(k_{\gamma(\cdot,\:t),\:S_{R(t)},\:\psi}\Big)=\Big(\dfrac{w(r_0)}{w(R(t))}\Big)^{n+1}\partial_{\widetilde{s}}^n\Big(k_{\widetilde{\gamma}(\cdot,\:\widetilde{t}),\:S_{r_0},\:\psi}\Big).
\end{align*}
\end{proof}

Also, we need an expression that links $\partial_s^n \big(k_{\gamma(\cdot,\:t),\:S_{R(t)}}\big)$ with $\partial_s^n\big( k_{\gamma(\cdot,\:t),\:S_{R(t)},\:\psi}\big)$. We may borrow the expression given in (51) of \cite{MiVi16}:
\begin{align}\label{ReVarCurOrDen}
\partial_s^n \big(k_{\gamma(\cdot,\:t),\:S_{R(t)}}\big)
&=\partial_s^n\big( k_{\gamma(\cdot,\:t),\:S_{R(t)},\:\psi}\big)+\nabla^{n+1}\psi(\partial_s,\:\cdots,\:\partial_s,\:\nu)
\nonumber\\ &\quad+\sum_{n,\:1}^{1,\:n-1}c_{i,\:J,\:K}\: k_{\gamma(\cdot,\:t),\:S_{R(t)},\:\psi}^i\:\partial_s^J\big( k_{\gamma(\cdot,\:t),\:S_{R(t)},\:\psi}\big)\:C(\nabla^K \psi),
\end{align}
we denote by $\nabla$ the covariant derivative in $S_{R(t)}$, which is independent of $t$, $J=(j_1,\:\cdots,\:j_q)$,\:$j_1\leq\:j_2\:\leq\cdots\leq j_q$ is an ordered multi-index and we denote by $\vert J\vert:=j_1+\cdots+j_q$, $d(J):=q$,\:$\partial_s^Jx:=\partial_s^{j_1}\:x\cdots\partial_s^{j_q}\:x$, $\nabla^Jx:=\nabla^{j_1}x\otimes\cdots\otimes\nabla^{j_q}x$. About the summation notation we only need to know that given $\sum_{m,\:r}^{s,\:t}$ then $i+\vert J\vert+d(J)+\vert K\vert=m+r$. And by $C(\nabla^K \psi)$ we denote $\nabla^K \psi$ acting on $\vert K\vert$ copies of $\partial_s$ or/and $\nu$.

We shall denote by $\overline\nabla$ the covariant derivative of the Riemannian manifold $(M^3_w,\:g_w)$.

\begin{lemma} Given $\gamma\in\mathcal{A}$
\begin{align}
\left\lbrace
 \begin{array}{ccccc}
\overline{\nabla}_\tau \tau& = &  & k\nu & -\dfrac{w'}{w}\partial_r     \\
\overline{\nabla}_\tau \nu & = & -k\tau &  &       \\
\overline{\nabla}_\tau \partial_r & = & \dfrac{w'}{w}\tau  & &\\
\end{array} \right.
\end{align}
where $\lbrace\tau,\:\nu,\:\partial_r\rbrace$ is an orthonormal frame over the curve with $\tau$, the unit tangent vector to $\gamma$, $\nu$, unit normal to $\gamma$ and tangent to the geodesic sphere $S_r$ where the curve is contained, and with $k$, the geodesic curvature of the curve $\gamma$ as curve of the sphere $(S_r,\:g_r)$.
\end{lemma}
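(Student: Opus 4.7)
The plan is to reduce all three identities to two standard facts about the ambient warped product $(M^3_w, g_w)$: the Gauss formula for the geodesic sphere $S_r \subset M^3_w$ and the fact that $S_r$ is totally umbilical with shape operator $-\tfrac{w'(r)}{w(r)}\,\mathrm{Id}$ when $\partial_r$ is chosen as the unit outward normal. Since $\gamma \in \mathcal{A}$, its image lies in some $S_r$, so $\tau$ and $\nu$ are tangent to $S_r$, while $\partial_r$ is normal; this is the setup in which Gauss' formula gives the simplest decompositions.

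First I would establish the warped-product identity $\overline{\nabla}_X \partial_r = \tfrac{w'(r)}{w(r)}\,X$ for every $X$ tangent to $S_r$. This follows from the expression $g_w = dr^2 + w^2 \sigma^\star g_{\mathbb{S}^2}$ in \eqref{DefVaAmbiRota} by a direct Koszul computation (or by recognizing $M^3_w$ as a warped product and quoting the standard formulas for connections on warped products). Applying this identity to $X = \tau$ yields the third equation immediately. Moreover, combined with the sign convention $AX = -\overline{\nabla}_X \partial_r$ adopted in the paper, it shows that the second fundamental form of $S_r$ satisfies $\alpha^{S_r}(X,Y) = -\tfrac{w'}{w}\, g_w(X,Y)\,\partial_r$ for $X,Y$ tangent to $S_r$.

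Next I would decompose $\overline{\nabla}_\tau \tau$ via Gauss as
\begin{align*}
\overline{\nabla}_\tau \tau = \nabla^{S_r}_\tau \tau + \alpha^{S_r}(\tau,\tau).
\end{align*}
The tangential piece is $k\nu$ by the very definition of the geodesic curvature $k$ of $\gamma$ in $(S_r, g_{S_r})$ with $\nu$ as the chosen unit normal, and the normal piece equals $-\tfrac{w'}{w}\,\partial_r$ by the umbilic computation above (since $|\tau|=1$). This yields the first equation. For the second equation I would again use Gauss:
\begin{align*}
\overline{\nabla}_\tau \nu = \nabla^{S_r}_\tau \nu + \alpha^{S_r}(\tau,\nu).
\end{align*}
The normal part vanishes because $g_w(\tau,\nu)=0$, and the tangential part is $-k\tau$: indeed, differentiating $g_{S_r}(\nu,\nu)=1$ gives $g_{S_r}(\nabla^{S_r}_\tau \nu, \nu)=0$, while differentiating $g_{S_r}(\tau,\nu)=0$ together with $\nabla^{S_r}_\tau \tau = k\nu$ forces $g_{S_r}(\nabla^{S_r}_\tau \nu, \tau) = -k$.

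The computation is essentially routine; the only subtlety is bookkeeping of sign conventions for the Weingarten map and the outward normal. Once those are fixed as in Section 1, each of the three identities follows from one application of Gauss' formula and the umbilic identity for geodesic spheres in the warped product $M^3_w$.
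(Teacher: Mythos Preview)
Your proof is correct and essentially the same as the paper's: both use the warped-product identity $\overline{\nabla}_X\partial_r=\tfrac{w'}{w}X$ (the paper cites O'Neill for this) together with the Gauss decomposition of $\overline{\nabla}_\tau\tau$ and $\overline{\nabla}_\tau\nu$ into their $S_r$-tangential and $\partial_r$-normal parts, and then read off the coefficients from the definition of $k$ and the orthonormality of $\{\tau,\nu\}$. The only cosmetic difference is that the paper writes the orthonormal expansion directly and computes each inner product, whereas you name the Gauss formula and the umbilicity of $S_r$ explicitly before doing the same computation.
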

\begin{proof} See \cite{On83}:
\begin{align*}
\overline{\nabla}_\tau \tau&=\< \overline{\nabla}_\tau \tau,\:\nu\right>\nu+\< \overline{\nabla}_\tau \tau,\:\partial_r\right>\partial_r
=\< \nabla^{S_r}_\tau \tau,\:\nu\right>\nu+\< -\dfrac{\left\langle \tau,\:\tau\right\rangle}{w}\overline{\nabla} w,\:\partial_r\right>\partial_r
\\
&=k\nu-\dfrac{w'}{w}\partial_r,
\\
\overline{\nabla}_\tau \nu&=\<\overline{\nabla}_\tau \nu,\:\tau\right>\tau+\< \overline{\nabla}_\tau \nu,\:\partial_r\right>\partial_r
=-\< \nu,\:\overline{\nabla}_\tau\tau\right>\tau+\<  -\dfrac{\left\langle \tau,\:\nu\right\rangle}{w}\overline\nabla w,\:\partial_r\right>\partial_r
\\
&=-k\tau,
\\
\overline{\nabla}_\tau \partial_r&=\dfrac{\partial_r(w)}{w}\tau=\dfrac{w'}{w}\tau.
\end{align*}

\end{proof}

\begin{theorem}\label{TInComAsin}
Let $\gamma_0\in\mathcal{A}$ and let  $\gamma:\mathbb{S}^1\times[0,\:T)\rightarrow M^3_w$ be the unique maximal solution of the initial value problem (\ref{PVI}) with $\gamma_0$ as initial condition, if the solution exists for all $t$, that is, $T=\infty$, then:
\begin{itemize}
\item[i)] If $0<C_1\leq R(t)\leq C_2$ with $C_1,\:C_2$ some constants for all $t\in[0,\:\infty)$, the flow $C^\infty$-subconverges, after a reparametrization of the curves $\gamma(\cdot,\:t)$, to a closed $\psi$-minimal spherical curve contained in the $B$-minimal geodesic sphere $S_{\lim_{t\rightarrow\infty} R(t)}$.
\item[ii)] If $\lim_{t\rightarrow T}R(t)=\infty$ then, $B(r)<0$ for all $r\in[r_0,\:\infty)$ is a necessary condition.
\begin{itemize}
\item[ii.a)] If $M^3_w$ is parabolic and the $\liminf_{r\rightarrow\infty}B(r)$ is finite then, the flow topologically subconverges to $\gamma_\infty:\mathbb{S}^1\rightarrow [0,\:\infty]\times\mathbb{S}^2,\:p\mapsto (\infty,\:\chi(p))$ where $\chi:\mathbb{S}^1\rightarrow S_{r_0}$ is a smooth embedded closed $\psi$-minimal curve in $(S_{r_0},\:g_{S_{r_0}},\:\psi)$.
\item[ii.b)]  If $M^3_w$ is hyperbolic and the $\limsup_{r\rightarrow\infty}B(r)\neq 0$ then, the flow either
\begin{itemize}
\item topologically converges to $\gamma_\infty:\mathbb{S}^1\rightarrow [0,\:\infty]\times\mathbb{S}^2,\:p\mapsto (\infty,\:\widetilde{\gamma}(p,\:\widetilde{T}))$, which is a curve contained in $\mathbb{S}_\infty\equiv \lbrace\infty\rbrace\times\mathbb{S}^2\subset [0,\:\infty]\times\mathbb{S}^2$ the infinite radius sphere,
\item or topologically converges to a point $p_\infty$ in $\mathbb{S}_\infty\equiv \lbrace\infty\rbrace\times\mathbb{S}^2\subset [0,\:\infty]\times\mathbb{S}^2$ the infinite radius sphere.
\end{itemize}
\end{itemize}
\end{itemize}
\end{theorem}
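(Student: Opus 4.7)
The plan is to reduce everything to the explicit decomposition from Theorem \ref{TeEyU},
\begin{align*}
\gamma(p,t) = \exp\bigl(\widetilde\gamma(p, \widetilde t(t)),\,(R(t)-r_0)\partial_r\bigr),
\end{align*}
so that the asymptotics of $\gamma$ are dictated by the scalar ODE $R'=-B(R)$ on one side and by the $\psi$MCF $\widetilde\gamma$ on the fixed compact manifold $(S_{r_0}, g_{S_{r_0}}, \psi)$ on the other. Throughout I would use the change of variables $r=R(t)$, $dt=-dr/B(r)$, to rewrite
\begin{align*}
\widetilde T = \int_0^T \Bigl(\tfrac{w(r_0)}{w(R(t))}\Bigr)^2 dt = w(r_0)^2 \int_{r_0}^{\lim_{t\to T} R(t)} \frac{dr}{-B(r)\, w^2(r)},
\end{align*}
together with the characterisation of parabolicity/hyperbolicity of $M^3_w$ via the divergence/convergence of $\int^\infty dr/w^2(r)$ (equivalently, of $\int^\infty dr/Area(S_r)$).

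For case i), since $R'=-B(R)$ is autonomous and $R$ is bounded, $R$ is monotone and converges to some $R_\infty\in[C_1,C_2]$; passing to the limit in the ODE forces $B(R_\infty)=0$, i.e.\ $S_{R_\infty}$ is $B$-minimal. Because $w\circ R$ is pinched between two positive constants, the integral above gives $\widetilde T=\infty$. Thus $\widetilde\gamma$ is an eternal $\psi$MCF on the compact surface with density $(S_{r_0}, g_{S_{r_0}}, \psi)$, which automatically satisfies (\ref{MiVi16Con1})--(\ref{MiVi16Con2}), so Theorem \ref{TViMi} produces a reparametrization and a sequence $\widetilde t_k\to\infty$ along which $\widetilde\gamma(\cdot,\widetilde t_k)$ $C^\infty$-converges to a closed $\psi$-minimal curve $\chi$ in $S_{r_0}$. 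Applying the smooth radial exponential with offset $R(t_k)-r_0\to R_\infty-r_0$ transfers this limit to a closed $\psi$-minimal curve inside $S_{R_\infty}$, using Lemma \ref{LeReCurGeoConDen} to see that the radial exponential carries $\psi$-minimal spherical curves to $\psi$-minimal spherical curves.

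For case ii) the necessity of $B<0$ on $[r_0,\infty)$ is immediate from uniqueness for $R'=-B(R)$: any zero $r^\star>r_0$ of $B$ would be a stationary solution that the forward flow of $R(t)$ starting at $r_0$ could not cross, contradicting $R(t)\to\infty$. For case ii.a) the hypothesis $\liminf_{r\to\infty}B(r)$ finite combined with $B<0$ yields an upper bound $|B(r)|\leq C$ eventually, hence $1/(-B)\geq 1/C$, and parabolicity $\int^\infty dr/w^2(r)=\infty$ then forces $\widetilde T=\infty$ via the integral formula. Theorem \ref{TViMi} on $(S_{r_0}, g_{S_{r_0}}, \psi)$ provides a subsequence along which $\widetilde\gamma$ $C^\infty$-converges to a closed $\psi$-minimal curve $\chi$; combined with $R(t_k)\to\infty$ this yields the topological subconvergence of $\gamma(\cdot,t_k)$ in $[0,\infty]\times\mathbb{S}^2$ to $p\mapsto(\infty,\chi(p))$.

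For case ii.b), $\limsup_{r\to\infty}B(r)\neq 0$ with $B<0$ on $[r_0,\infty)$ gives $\limsup B\leq -\delta<0$, so $1/(-B)\leq 1/\delta$ eventually, and hyperbolicity $\int^\infty dr/w^2(r)<\infty$ yields $\widetilde T<\infty$. Two mutually exclusive scenarios remain for $\widetilde\gamma$ near $\widetilde T$: either $\widetilde T$ is strictly less than the maximal time of $\widetilde\gamma$, so that $\widetilde\gamma(\cdot,\widetilde T)$ is a smooth embedded curve in $S_{r_0}$ and $\gamma(\cdot,t)$ topologically converges to $p\mapsto(\infty,\widetilde\gamma(p,\widetilde T))\in\mathbb{S}_\infty$; or $\widetilde T$ equals the maximal time of $\widetilde\gamma$, in which case Theorems \ref{TAnOaks} and \ref{Zhu} imply that $\widetilde\gamma$ collapses to a round point $q_\infty\in S_{r_0}\cong\mathbb{S}^2$, and $\gamma(\cdot,t)$ topologically converges to $(\infty,q_\infty)\in\mathbb{S}_\infty$. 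The main technical point to watch is that the convergence at infinity is genuine rather than merely subsequential in the $\mathbb{S}^2$-factor: this uses that $R(t)$ depends only on $t$ (so the radial coordinate diverges uniformly in $p$) and that the $\mathbb{S}^2$-component is controlled by the $C^0$-convergence of $\widetilde\gamma(\cdot,\widetilde t(t))$ as $\widetilde t(t)\to\widetilde T$, both immediate consequences of the product-like formula (\ref{SoWar}).
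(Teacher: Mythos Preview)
Your proposal is correct, and for case ii) (the necessity of $B<0$), case ii.a), and case ii.b) it is essentially the paper's own argument: rewrite $\widetilde T$ as $-Area(S_{r_0})\int_{r_0}^\infty \frac{dr}{B(r)\,Area(S_r)}$, use the bound on $B$ coming from the $\liminf$/$\limsup$ hypothesis to compare with $\int^\infty dr/Area(S_r)$, and invoke the parabolic/hyperbolic dichotomy to decide whether $\widetilde T=\infty$ or $\widetilde T<\infty$, then read off the behaviour of $\widetilde\gamma$ from Theorem~\ref{TViMi} or Theorem~\ref{TAnOaks}.

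For case i) you take a genuinely shorter route than the paper. The paper, after showing $\widetilde T=\infty$, does \emph{not} simply cite Theorem~\ref{TViMi} on $(S_{r_0},g_{S_{r_0}},\psi)$; instead it imports an intermediate step from the proof of that theorem (uniform decay of $\partial_{\widetilde s}^n k_{\widetilde\gamma,S_{r_0},\psi}$), transfers it to $\partial_s^n k_{\gamma,S_{R(t)},\psi}$ via Lemma~\ref{LeDeCur}, computes $\partial_s^n\gamma$ explicitly in the moving frame $\{\tau,\nu,\partial_r\}$, bounds the resulting polynomials using $C_1\le R(t)\le C_2$ and the length bound $L_t\le L_\xi(\gamma_0)/D$, and then runs Arzel\`a--Ascoli directly for $\hat\gamma$ in $M^3_w$. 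Your argument bypasses all of this: apply Theorem~\ref{TViMi} as a black box to $\widetilde\gamma$, then push the $C^\infty$-sublimit $\chi\subset S_{r_0}$ forward by the smooth map $q\mapsto\exp(q,(R_\infty-r_0)\partial_r)$, using Lemma~\ref{LeReCurGeoConDen} to see that $\psi$-minimality is preserved under this conformal radial transport. This is cleaner and avoids the frame computations; what the paper's longer argument buys is an explicit, self-contained compactness estimate for the actual flow $\gamma$ in $M^3_w$ (including the uniform length bound (\ref{ProCotaLong}) and the regularity of the limit via Lemma~8 of \cite{MiVi16}), rather than relying on smoothness of the exponential map to transfer estimates from $S_{r_0}$.
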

\begin{proof}
\begin{itemize}
\item[]
%%%%%%%%%%%%%%%%%%%%%%%%%%%%%%%%%%%%%%%%%%%%%%%%%%%%%%%%%
\item[Case i)]
If $0<C_1\leq R(t)\leq C_2$ for all $t\in[0,\:\infty)$ and the maximal time of the solution of (\ref{PVI}) is infinite then,  $\widetilde{T}=\infty$:
\\
 The $\lim_{t\rightarrow \infty}R(t)=R_\infty\in\mathbb{R}$ $\Rightarrow$ $\lim_{t\rightarrow \infty}w(R(t))=w(R_\infty)\in\mathbb{R}$, so $\forall\:\epsilon>0\:\exists\: t_\epsilon>0$ such that 
$\vert w(R(t))-w(R_\infty) \vert<\epsilon$ for all $t\geq t_\epsilon$. Therefore
\begin{align*}
\widetilde{T}&=\int_0^\infty\Big(\dfrac{w(r_0)}{w(R(t))}\Big)^2\:dt
\\
&\geq\int_0^{t_\epsilon}\Big(\dfrac{w(r_0)}{w(R(t))}\Big)^2\:dt +\int_{t_\epsilon}^\infty\Big(\dfrac{w(r_0)}{w(R_\infty)+\epsilon}\Big)^2\:dt=\infty\Rightarrow \:\widetilde{T}=\infty.
\end{align*}
As $\widetilde{t}:[0,\infty)\longrightarrow [0,\infty)$ is a diffeomorphism by Lemma \ref{LeCamTi}, we obtain that the behaviour of the flows (\ref{PVI}) and (\ref{PVIesfera}) is the same.

From Step 4 on page 23 of \cite{MiVi16}, Lemma \ref{LeDeCur} and the hypothesis about $R(t)$, we obtain that
\begin{align}\label{CurNula}
\partial_s^n\Big(k_{\gamma(\cdot,\:t),\:S_{R(t)},\:\psi}\Big)& \text{\:converges\:uniformly\:to\:zero\:when\:}t\rightarrow\infty
\nonumber\\ &\text{\:for\:every}\:n\in\mathbb{N}.
\end{align}  

On the other hand,
\begin{align*}
\partial_s \gamma&=\tau,
\\
\partial_s^2 \gamma&=\overline{\nabla}_\tau \tau=k\nu-\dfrac{w'}{w}\partial_r,
\\
\partial_s^3 \gamma&=\overline{\nabla}_\tau \Big(k\nu-\dfrac{w'}{w}\partial_r\Big)
=\partial_s k\:\nu+k\overline{\nabla}_\tau\nu-\partial_s\Big(\dfrac{w'}{w}\Big)\partial_r-\dfrac{w'}{w}\overline{\nabla}_\tau\partial_r
\\&=\partial_s k\:\nu-k^2\tau-\big(\dfrac{w'}{w}\big)^2\tau
=\Big(-k^2-\big(\dfrac{w'}{w}\big)^2\Big)\tau+\partial_s k\:\nu,
\\
\partial_s^4 \gamma&=\overline{\nabla}_\tau\Big[\Big(-k^2-\big(\dfrac{w'}{w}\big)^2\Big)\tau+\partial_s k\:\nu\Big]
\\
&=-2k\partial_s k\:\tau+\Big(-k^2-\big(\dfrac{w'}{w}\big)^2\Big)\overline{\nabla}_\tau\tau+\partial_s^2 k\:\nu+\partial_s k\:\overline{\nabla}_\tau\nu
\\
&=-2k\partial_s k\:\tau+\Big(-k^2-\big(\dfrac{w'}{w}\big)^2\Big)\Big(k\nu-\dfrac{w'}{w}\partial_r\Big)+\partial_s^2 k\:\nu
-k\partial_s k\:\tau
\\
&=-3k\partial_s k\:\tau+\Big(-k^3-k\big(\dfrac{w'}{w}\big)^2+\partial_s^2 k\Big)\nu
+\Big(k^2\dfrac{w'}{w}+\big(\dfrac{w'}{w}\big)^3\Big)\partial_r,
\end{align*}
we can obtain an expression for $\partial_s^n\gamma$ of the form:
\begin{align}
\partial_s^n\gamma&=f_n (\dfrac{w'}{w},\:k,\:\partial_s k,\cdots,\:\partial_s^{n-3}k)\tau+g_n (\dfrac{w'}{w},\:k,\:\partial_s k,\cdots,\:\partial_s^{n-2}k)\nu
\nonumber\\&\quad+
h_n(\dfrac{w'}{w},\:k,\:\partial_s k,\cdots,\:\partial_s^{n-4}k)\partial_r,\:n\geq 1,
\end{align}
where $f_n,\:g_n$ and $h_n$ are polynomials in $\dfrac{w'}{w},\:k,\:\partial_s k,\cdots,\:\partial_s^{j}k$ with $j\leq n-2$ where all monomials have degree $n-1$, which is obtained counting $\partial_s^i k$ as $i+1$. With this perspective the application $\gamma$ has the form $\gamma=\gamma(s_t,\:t)$ and now we consider the change of parameter:
\begin{align*}
\left.
 \begin{array}{ccc}
[0,\:\text{Length}_t]& \longrightarrow &   [0,\:1]  \\
s_t & \longmapsto&\alpha:= \dfrac{s_t}{L_t}
\end{array} \right.
\end{align*}
and we denote by $\hat{\gamma}$ the reparametrization of the curves with the parameter $\alpha$, that is $\hat{\gamma}(\alpha,\:t)=\gamma(s_t(\alpha),\:t)$. We notice that
\begin{align*}
\dfrac{\partial s_t }{\partial\alpha}=L_t\quad\text{ and }\quad
\dfrac{\partial^n s_t }{\partial\alpha}=0,\:\forall\: n\geq 2.
\end{align*}
Then we have that:
\begin{align*}
\partial^n_\alpha\:\hat{\gamma}&=L_t^n\: \partial^n_{s_t}\gamma,\:\forall\: n=1,\:2,\:\cdots
\end{align*}
and therefore:
\begin{align}\label{ConDerNdeLaCurva}
\vert \partial^n_\alpha\:\hat{\gamma}\vert =\vert L_t^n\: \partial^n_{s_t}\gamma\vert=L_t^n \vert  \partial^n_{s_t}\gamma\vert = L_t^n  \sqrt{f_n^2+g_n^2+h_n^2},\:\forall\: n=1,\:2,\:\cdots
\end{align}

We may also obtain the following bound for the length of the curves:
\begin{align*}
e^{\xi(\gamma_t)}=e^{\varphi\circ\pi(\gamma_t)+\psi\circ\sigma(\gamma_t)}=e^{\varphi\circ\pi(\gamma_t)}e^{\psi\circ\sigma(\gamma_t)}\geq \:\min_{r\in[C_1,\:C_2]}e^{\varphi(r)}\:\min_{p\in\mathbb{S}^2}e^{\psi(p)},
\end{align*}
we denote by D the last expression, then:
\begin{align}\label{ProCotaLong}
L_t&=\int_{\mathbb{S}^1}ds_t=\int_{\mathbb{S}^1}\dfrac{e^{\xi(\gamma_t)}}{e^{\xi(\gamma_t)}}ds_t\leq \dfrac{1}{D}\int_{\mathbb{S}^1}e^{\xi(\gamma_t)}ds_t
\nonumber\\&=\dfrac{1}{D}L_\xi(\gamma_t)\leq \dfrac{1}{D}L_\xi(\gamma_0),
\end{align}
as $L_\xi(\gamma_t)$ decreases throughout the flow, so $L_t$ is bounded independly of $t$.

On the other hand, the hypothesis $C_1\leq R(t)\leq C_2$ for all $t\in[0,\:\infty)$ implies that  $\dfrac{w'}{w}$ and $w$ are bounded. This fact, together with (\ref{CurNula}) and (\ref{ReVarCurOrDen}), imply that
\begin{align}\label{AcotaNormaFuncionesDerivada}
\sqrt{f_n^2+g_n^2+h_n^2}\text{\:is\:bounded\:independly\:of\:t.}
\end{align}
Therefore, from (\ref{ConDerNdeLaCurva}), (\ref{ProCotaLong}) and (\ref{AcotaNormaFuncionesDerivada}):
\begin{align}
\vert \partial^n_\alpha\:\hat{\gamma}(\alpha,\:t)\vert&\leq C_n,\:\forall\:(\alpha,\:t)\in[0,\:1]\times[0,\:\infty),\:\forall\: n=1,\:2,\:\cdots
\end{align}
with $C_n$ independent of $(\alpha,\:t)$. The case $n=0$ is immediate from the hypothesis about $R(t)$. We might use the Arzela-Ascoli theorem to conclude that there is a family $\lbrace \hat{\gamma}(\cdot,\:t_m)\rbrace_{m\in\mathbb{N}}$, $t_m\rightarrow\infty$, such that $C^\infty$-converges to a limit closed regular curve $\hat{\gamma}_\infty$. To obtain this result, we use a diagonal type argument.

The limit curve is regular because of Lemma 8 of \cite{MiVi16}. This implies that, in our situation, $L_t\geq c$ for all $t\in[0,\:\infty)$ for some constant $c$, so $\vert\partial_\alpha\hat{\gamma}\vert=L_t\geq c$ and the limit curve $\hat\gamma_\infty$ is regular.

We note that the geodesic sphere whose radius is $R_\infty=\lim_{t\rightarrow\infty}R(t)$ is B-minimal and from (\ref{CurNula}) $k_{\gamma(\cdot,\:t),\:S_{R(t)},\:\psi}$ converges uniformly to zero when $t\rightarrow\infty$, then $\hat{\gamma}_\infty$ is a $\psi$-minimal curve contained in the B-minimal geodesic sphere $S_{R_\infty}$.
%%%%%%%%%%%%%%%%%%%%%%%%%%%%%%%%%%%%%%%%%%%%%%%%%%%%%
%%%%%%%%%%%%%%%%%%%%%%%%%%%%%%%%%%%%%%%%%%%%%%%%%%%%%
\item[Case ii)] As the $\lim_{t\rightarrow T}R(t)=\infty$ then $R'(t)> 0$ for all $t\in[0,\infty)$. Otherwise, there is $t^\star\in [0,\:\infty)$ such that $S_{R(t^\star)}$ is a $B$-minimal sphere and the solution $\gamma(\cdot,\:t)$ is contained in $\mathbb{B}_{R(t^\star)}$ for all time, therefore $\lim_{t\rightarrow T}R(t)\leq R(t^\star)$. As a consequence, we obtain that $B(R)<0$ for all $R\in[r_0,\infty)$, due to $R'(t)=-B(R(t))$.

The hypothesis of the case ii.a) about the function $B$ implies that exists a constant $C>0$ such that $-C\leq B(r)<0$ for all $r\in[r_0,\infty)$. Besides, the hypothesis of the case ii.b) about the function $B$ implies that exists a constant $C>0$ such that $B(r)\leq-C<0$ for all $r\in[r_0,\infty)$. As a summary:
\begin{align*}
\left. \begin{array}{ccc}
\text{In\:the\:case\:ii.a)\:}& -\dfrac{1}{B(r)}\geq \dfrac{1}{C}, & \text{for\:all\:}r\in[r_0,\:\infty). \\
\text{In\:the\:case\:ii.b)\:}& \dfrac{1}{C}\geq -\dfrac{1}{B(r)},&\text{for\:all\:}r\in[r_0,\:\infty).\end{array} \right.
\end{align*}
%%%%%%%%%%%%%%%%%%%%%%%%%%%%%%%%%%%%%%%%%%%%%%%%%%%%%
On the other hand, we could obtain the following expression for $\widetilde{T}$:
\begin{align}
\widetilde{T}&=\int_0^\infty\Big(\dfrac{w(r_0)}{w(R(t))}\Big)^2\:dt=\int_0^\infty\dfrac{Area(S_{r_0})}{Area(S_{R(t)})}\:dt
\nonumber\\
&=Area(S_{r_0})\int_0^\infty\dfrac{R'(t)}{R'(t)\:Area(S_{R(t)})}\:dt
\nonumber\\
&=-Area(S_{r_0})\int_0^\infty\dfrac{R'(t)}{B(R(t))\:Area(S_{R(t)})}\:dt
\nonumber\\
&=-Area(S_{r_0})\int_{r_0}^\infty\dfrac{1}{B(r)\:Area(S_{r})}\:dr,
\end{align}
we realise that $R:[0,\:\infty)\rightarrow [r_0,\:\infty)$ defines a diffeomorphism on its image.

\begin{itemize}
\item[Case ii.a)] $M^3_w$ is parabolic and $-\dfrac{1}{B(r)}\geq \dfrac{1}{C}\:\text{for\:all\:}r\in[r_0,\:\infty)$: $\widetilde{T}=\infty$.
\begin{align*}
\widetilde{T}&=-Area(S_{r_0})\int_{r_0}^\infty\dfrac{1}{B(r)\:Area(S_{r})}\:dr
\\&\geq 
\dfrac{Area(S_{r_0})}{C}\int_{r_0}^\infty\dfrac{1}{Area(S_{r})}\:dr=\infty,
\end{align*}
the last equality is true because the Riemannian manifold $M^3_w$ is parabolic (see \cite{Gri99}).
\\
As we have that $\widetilde{t}:[0,\infty)\longrightarrow [0,\infty)$ is a diffeomorphism, we conclude that the behaviour of both flows is the same in infinite time. And the behaviour of the flow $\widetilde\gamma$, with infinite maximal time, is given by Theorem \ref{TViMi}.
%%%%%%%%%%%%%%%%%%%%%%%%%%%%%%%%%%%%%%%%%%%%%%%%%%%
\item[Case ii.b)] $M^3_w$ is hyperbolic and $\dfrac{1}{C}\geq -\dfrac{1}{B(r)}\:\text{for\:all\:}r\in[r_0,\:\infty)$: $\widetilde{T}<\infty$.
\begin{align*}
\widetilde{T}&=-Area(S_{r_0})\int_{r_0}^\infty\dfrac{1}{B(r)\:Area(S_{r})}\:dr
\\
&\leq 
\dfrac{Area(S_{r_0})}{C}\int_{r_0}^\infty\dfrac{1}{Area(S_{r})}\:dr<\infty,
\end{align*}
we note that the last equality is true because the Riemannian manifold $M^3_w$ is hyperbolic (see \cite{Gri99}).
\\
As we have that $\widetilde{t}:[0,\infty)\longrightarrow [0,\widetilde{T})$ is a diffeomorphism with $\widetilde{T}<\infty$, we get that the behaviour of the flow $\gamma$ in infinite time is the behaviour of the flow $\widetilde{\gamma}$ in finite time. This flow in $\widetilde{t}=\widetilde{T}$ has two options: either the flow is defined, $\widetilde{\gamma}(\cdot,\widetilde{T})$ is a smooth curve, or the flow collapses to a point by Theorem \ref{TAnOaks}. We remark that in the first situation $\widetilde{T}$ is not the maximal time of the flow (\ref{PVIesfera}).
\end{itemize}
\end{itemize}
\end{proof}

We note that Proposition \ref{PropoColapOrigenConExtenTF} together with Theorem \ref{TInComAsin} i) give us a proof for Theorem B and Theorem \ref{TInComAsin} ii) is essentially Theorem C.

\begin{remark}
The case ii.a) is not true if we eliminate the condition on the function $B$. We may find situations where $M^3_w$ is parabolic, $\liminf_{r\rightarrow\infty} B(r)=-\infty$ and $\widetilde{T}<\infty$. For example, let $(M^3_w,\:g_{w},\:\xi)$ be a smooth rotationally symmetric space such that $w\vert_{[C,\:\infty)}(r)=\sqrt{r}$ and $\varphi(r)=-\dfrac{1}{2}\ln(r)-\dfrac{r^2}{2}$. Then, $M^3_w$ is parabolic,
\begin{align*}
\int_0^\infty \dfrac{dr}{Area(S_r)}&=\int_0^\infty \dfrac{dr}{4\pi w(r)^2}=\int_0^C \dfrac{dr}{4\pi w(r)^2}
+\int_C^\infty \dfrac{dr}{4\pi r}=\infty,
\end{align*}
and
\begin{align*}
B(r)=\dfrac{w'}{w}+\varphi'(r)=\dfrac{1}{2r}-\dfrac{1}{2r}-r=-r<0,\:\text{for\:all\:}r\in(C,\:\infty),
\end{align*}
so
\begin{align*}
\liminf_{r\rightarrow\infty} B(r)=-\infty.
\end{align*}
Given $\gamma_0\in\mathcal{A}$, such that $\pi(Im\:\gamma_0)>C$, as initial condition then, the system for $R$ is as follows:
\begin{align*}
\left\lbrace \begin{array}{ccc}
R'(t) & = & R(t), \\
R(0) & = & r_0, \end{array} \right.
\end{align*}
thus $R(t)=r_0e^t$. 

\noindent On the other hand, if we assume that $\psi\equiv0$ and that the area enclosed by the curve $\gamma_0$ is $Area(S_{r_0})/2$ then, $T=\infty$. 

\noindent Bearing all this in mind, the value of $\widetilde{T}$ is
\begin{align*}
\widetilde{T}&=\int_0^\infty\Big(\dfrac{w(r_0)}{w(R(t))}\Big)^2\:dt=
\int_0^\infty\dfrac{r_0}{r_0e^t}\:dt<\infty.
\end{align*}
Therefore, the flow
\begin{itemize}
\item topologically converges to $\gamma_\infty:\mathbb{S}^1\rightarrow [0,\:\infty]\times\mathbb{S}^2,\:p\mapsto (\infty,\:\widetilde{\gamma}(p,\:\widetilde{T}))$,
\item or topologically converges to a point $p_\infty\in\mathbb{S}_\infty\equiv \lbrace\infty\rbrace\times\mathbb{S}^2\subset [0,\:\infty]\times\mathbb{S}^2$.
\end{itemize}
\end{remark}

\begin{remark} In the case ii.b) the situation is analogous: it is not true if we eliminate the condition on the function $B$. We may find situations where $M^3_w$ is hyperbolic, $\limsup_{r\rightarrow\infty}B(r)= 0$ and $\widetilde{T}=\infty$. For example, let $(\mathbb{R}^3,\:g_{\mathbb{R}^3},\:\xi)$ be the 3-dimensional Euclidean space with a density $\xi$ such that $\varphi(r)=-2\ln(r)$. Then, $\mathbb{R}^3$ is a hyperbolic manifold and
\begin{align*}
B(r)= \dfrac{1}{r}+\varphi'(r)=-\dfrac{1}{r}<0,\:\text{for\:all\:}r\in(0,\:\infty),
\end{align*}
so
\begin{align*}
\limsup_{r\rightarrow\infty}\: B(r)=0.
\end{align*}
Given $\gamma_0\in\mathcal{A}$ as initial condition, we have that $R(t)$ satisfies the system
\begin{align*}
\left\lbrace \begin{array}{ccc}
R'(t) & = & \dfrac{1}{R(t)}, \\
R(0) & = & r_0, \end{array} \right.
\end{align*}
so $R(t)=\sqrt{r_0^2+2t}$.

\noindent On the other hand, if we assume that $\psi\equiv0$ and that the area enclosed by the curve $\gamma_0$ is $Area(S_{r_0})/2$ then, $T=\infty$. 

\noindent Now, we can calculate the value of $\widetilde{T}$:
\begin{align*}
\widetilde{T}=\int_0^\infty\Big(\dfrac{r_0}{R(t)}\Big)^2\:dt=\int_0^\infty\dfrac{r_0^2}{r_0^2+2t}\:dt
=\dfrac{r_0^2}{2}\ln(r_0^2+2t)\big\vert_{0}^{\infty}=\infty.
\end{align*}
Then, the flow topologically subconverges to $\gamma_\infty:\mathbb{S}^1\rightarrow [0,\:\infty]\times\mathbb{S}^2,\:p\mapsto (\infty,\:\chi(p))$ with $\chi:\mathbb{S}^1\rightarrow S_{r_0}$ a smooth embedded $\psi$-minimal curve in $(S_{r_0},\:g_{S_{r_0}},\:\psi)$.
\end{remark}

In the following result, we provide an equivalence to the hypothesis about the function $B$ in the cases ii.a) and ii.b) of the last theorem. 

Let's consider $(\mathbb{R}^2,\:g_{w},\:\varphi)$ a Riemannian manifold with density where $g_{w}:=dr^2+w^2(r) g_{\mathbb{S}^1}$ and $\varphi=\varphi(r)$ then, $B(r)$ is the geodesic curvature with density of the $C_r$ circle centered at the origin whose radius is $r$ respect to the normal field $-\partial_r$.

\begin{proposition}
Given
\begin{align*}
L:[0,\infty)&\longrightarrow\mathbb{R}
\nonumber\\
r&\longmapsto L(r):=\ln\big(Length_\varphi(C_r)\big),
\end{align*}
where $\text{Length}_\varphi(C_r)$ is the length with density of the circle $C_r$ respect to the manifold $(\mathbb{R}^2,\:g_{w},\:\varphi)$. If we are in the situation of the case ii) of the last theorem, then:
\begin{itemize}
\item[ii.a)] The function $L\vert_{[r_0,\:\infty)}$  is Lipschitz if and only if $\liminf_{r\rightarrow\infty} B(r)$ is finite.
\item[ii.b)] The function $\Big(L\vert_{[r_0,\:\infty)}\Big)^{-1}$ is Lipschitz if and only if $\limsup_{r\rightarrow\infty} B(r)\neq 0$.
\end{itemize}
\end{proposition}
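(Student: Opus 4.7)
The heart of the proof is a direct computation of $L$ that reveals $L'=B$, after which both statements reduce to standard calculus facts about Lipschitz $C^1$ functions on a half-line.

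First I would compute the length with density of the circle $C_r$ in $(\mathbb{R}^2,\:g_w,\:\varphi)$. Parametrizing $C_r$ by $\theta\in[0,\:2\pi)$, the induced metric is $w(r)^2d\theta^2$, so $ds_{g_w}=w(r)\,d\theta$, and hence
\begin{align*}
\mathrm{Length}_\varphi(C_r)=\int_0^{2\pi}e^{\varphi(r)}w(r)\,d\theta=2\pi\:w(r)\:e^{\varphi(r)}.
\end{align*}
Therefore $L(r)=\ln(2\pi)+\ln w(r)+\varphi(r)$, which is smooth on $(0,\infty)$, and a one-line differentiation gives
\begin{align*}
L'(r)=\frac{w'(r)}{w(r)}+\varphi'(r)=B(r).
\end{align*}
Since we are in the situation of case ii) of the previous theorem, the proof of Theorem \ref{TInComAsin} already forced $B(r)<0$ on $[r_0,\infty)$, so $L$ is strictly decreasing and continuously differentiable on $[r_0,\infty)$, hence $L|_{[r_0,\infty)}$ is a $C^\infty$ diffeomorphism onto its image.

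For part ii.a) I would invoke the elementary fact that a $C^1$ function on $[r_0,\infty)$ is Lipschitz if and only if its derivative is bounded. Here $L'=B$ is continuous on $[r_0,\infty)$ and satisfies $B<0$, so $\limsup_{r\to\infty}B(r)\leq 0$ and $B$ is locally bounded on any compact subinterval; thus $B$ is bounded on $[r_0,\infty)$ if and only if it is bounded below at infinity, i.e.\ $\liminf_{r\to\infty}B(r)>-\infty$. This gives the claimed equivalence.

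For part ii.b) I would apply the inverse function formula: since $L'=B<0$, the inverse $(L|_{[r_0,\infty)})^{-1}$ is $C^1$ with derivative
\begin{align*}
\bigl((L|_{[r_0,\infty)})^{-1}\bigr)'(y)=\frac{1}{B\bigl(L^{-1}(y)\bigr)}.
\end{align*}
Again a $C^1$ function on an interval is Lipschitz if and only if its derivative is bounded, so $(L|_{[r_0,\infty)})^{-1}$ is Lipschitz if and only if $1/|B|$ is bounded on $[r_0,\infty)$, equivalently $|B|$ is bounded away from $0$. Since $B<0$ and $|B|$ is continuous and positive on the compact piece $[r_0,\:M]$ for any $M$, the only obstruction is at infinity, and $|B|$ is bounded away from $0$ at infinity precisely when $\limsup_{r\to\infty}B(r)<0$, which, using $B<0$, is the same as $\limsup_{r\to\infty}B(r)\neq 0$. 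I do not anticipate any real obstacle here; the only subtle point is keeping the signs straight and remembering that $B<0$ on $[r_0,\infty)$ so that $\limsup B\le 0$ and $\liminf B\le 0$, which turns $\neq 0$ into $<0$ in the right places.
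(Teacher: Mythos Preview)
Your proof is correct and follows essentially the same approach as the paper: compute $L'(r)=B(r)$, then use that a $C^1$ function on a half-line is Lipschitz iff its derivative is bounded, together with the inverse function theorem for part ii.b). The only difference is that you explicitly compute $\mathrm{Length}_\varphi(C_r)=2\pi\,w(r)\,e^{\varphi(r)}$ to justify $L'=B$, whereas the paper simply asserts this identity.
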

\begin{proof}
We note that $L\vert_{[r_0,\:\infty)}$ is a $C^1$-function.
\begin{itemize}
\item[Case ii.a)] 
If $L\vert_{[r_0,\:\infty)}$ is Lipschitz then its derivative is bounded in $[r_0,\:\infty)$, that is, there exists a $C>0$ such that $\vert L\vert_{[r_0,\:\infty)}'(r)\vert\leq C$ for all $r\in[r_0,\:\infty)$. As $ L\vert_{[r_0,\:\infty)}'(r)=B(r)$ we obtain that $\liminf_{r\rightarrow\infty} B(r)$ is finite.

Reciprocally, if we assume that the $\liminf_{r\rightarrow\infty} B(r)$ is finite as $B(r)<0\:\text{for\:all\:}r\in[r_0,\:\infty)$ then exists $C>0$ such that $\vert B(r)\vert<C\:\text{for\:all\:}r\in[r_0,\:\infty)$ and $L\vert_{[r_0,\:\infty)}'(r)=B(r)$ so $\vert L\vert_{[r_0,\:\infty)}'(r)\vert<C\:\text{for\:all\:}r\in[r_0,\:\infty)$. Therefore, $L\vert_{[r_0,\:\infty)}$ is Lipschitz.

\item[Case ii.b)] In the case ii) we have that 
\begin{align*}
L\vert_{[r_0,\:\infty)}'(r)=B(r)<0,\:\text{for\:all\:}r\in[r_0,\:\infty), 
\end{align*}
so $L\vert_{[r_0,\:\infty)}$ has inverse function from the inverse function theorem, moreover  $\Big(L\vert_{[r_0,\:\infty)}\Big)^{-1}(r)=\dfrac{1}{L\vert_{[r_0,\:\infty)}(r)}=\dfrac{1}{B(r)}$. From this equality we obtain the result, the proof is analogous to the case ii.a).
\end{itemize}
\end{proof}

\section{Gaussian density}

In this section we aim to study a particular case where the case ii.b) of Theorem \ref{TInComAsin} applies. We shall assume that $\psi\equiv 0$, in order to use the area variation formula enclosed by the curve, which ultimately provides us a feasible way to explicitly perform the calculus.

Let $(\mathbb{R}^3,\:g_{\mathbb{R}^3},\:\xi)$ be the 3-dimensional Euclidean space with a density $\xi$ such that $\psi(r)\equiv 0$ and $\varphi(r)=-\dfrac{1}{2}\mu^2 r^2$, that is, the radial part of the density is the Gaussian density. In this situation, given $\gamma_0\in\mathcal{A}$ as initial condition of the problem (\ref{PVI}), the system of ODE for $R(t)$ is:
\begin{align}
\left\lbrace \begin{array}{ccc}
R'(t) & = & -\dfrac{1}{R(t)}+\mu^2 R(t), \\
R(0) & =&r_0.\end{array} \right.
\end{align}
We could resolve this system:
\begin{align*}
\left.
\begin{array}{ll}
R'(t) =  -\dfrac{1}{R(t)}+\mu^2 R(t), &   R'(t) =  \dfrac{\mu^2R(t)^2-1}{R(t)},\\
\dfrac{R'(t)R(t)}{\mu^2R(t)^2-1}=1,& \ln\Big\vert\dfrac{\mu^2R(t)^2-1}{\mu^2R(0)^2-1}\Big\vert=2\mu^2 t,\\
\Big\vert\dfrac{\mu^2R(t)^2-1}{\mu^2 r_0^2-1}\Big\vert=e^{2\mu^2 t}, &  
\end{array} \right.
\end{align*}
and the solution is:
\begin{align}
 R(t)=\dfrac{1}{\mu}\sqrt{1+(\mu^2 r_0^2-1)e^{2\mu^2 t} }.
\end{align}

\begin{proposition}{\label{ProGaCamTi}}
The link between the time parameters is given by
\begin{align*}
\widetilde{t}:[0,\:T)&\longrightarrow [0,\:\widetilde{T})
\nonumber\\
t&\longmapsto \widetilde{t}(t)=\dfrac{r_0^2}{2} \ln\Big(\dfrac{\mu^2 r_0^2 e^{2\mu^2 t}}{1+(\mu^2 r_0^2-1)e^{2\mu^2 t}}\Big),
\\
t:[0,\:\widetilde{T})&\longrightarrow [0,\:T)
\nonumber\\
\widetilde{t}&\longmapsto t(\widetilde{t})=\dfrac{1}{2\mu^2}\ln\Big(\dfrac{ e^{2\widetilde{t}/r_0^2}}{\mu^2 r_0^2-(\mu^2 r_0^2-1)e^{2\widetilde{t}/r_0^2}}\Big).
\end{align*}
\end{proposition}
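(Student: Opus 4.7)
The plan is to derive $\widetilde{t}(t)$ directly from its definition (\ref{CamTi}) using the explicit formula already obtained for $R(t)$, and then invert the resulting expression algebraically.

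First I would specialize (\ref{CamTi}) to the Euclidean setting $w(r)=r$, which reduces the time change to
\begin{align*}
\widetilde{t}(t)=\int_0^t\frac{r_0^2}{R(\tau)^2}\,d\tau
=\int_0^t\frac{\mu^2 r_0^2}{1+(\mu^2r_0^2-1)e^{2\mu^2\tau}}\,d\tau,
\end{align*}
after substituting the solution $R(\tau)=\mu^{-1}\sqrt{1+(\mu^2r_0^2-1)e^{2\mu^2\tau}}$ that has just been computed. The next step is the change of variables $u=e^{2\mu^2\tau}$, which transforms the integrand into a rational function of $u$ to which partial fractions applies cleanly: writing
\begin{align*}
\frac{1}{u\bigl(1+(\mu^2r_0^2-1)u\bigr)}=\frac{1}{u}-\frac{\mu^2r_0^2-1}{1+(\mu^2r_0^2-1)u}
\end{align*}
produces an explicit antiderivative of the form $\tfrac{r_0^2}{2}\ln\bigl|u/(1+(\mu^2r_0^2-1)u)\bigr|$.

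Evaluating this antiderivative between $\tau=0$ (where $u=1$, giving the constant $\tfrac{r_0^2}{2}\ln(\mu^2 r_0^2)^{-1}$) and $\tau=t$ (where $u=e^{2\mu^2 t}$) and combining the two logarithms yields precisely
\begin{align*}
\widetilde{t}(t)=\frac{r_0^2}{2}\ln\!\left(\frac{\mu^2 r_0^2\, e^{2\mu^2 t}}{1+(\mu^2r_0^2-1)e^{2\mu^2 t}}\right),
\end{align*}
which is the first formula in the statement. Throughout this step one needs the sign of $\mu^2r_0^2-1$ to remain consistent so that the absolute values can be dropped; this is automatic because $R(t)>0$ forces the argument of the logarithm to be positive for every admissible $t$.

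Finally, for the inverse I would set $v=e^{2\mu^2 t}$ in the expression above and solve the linear equation
\begin{align*}
e^{2\widetilde{t}/r_0^2}\bigl(1+(\mu^2r_0^2-1)v\bigr)=\mu^2 r_0^2\,v
\end{align*}
for $v$, obtaining $v=e^{2\widetilde{t}/r_0^2}\bigl/\bigl(\mu^2r_0^2-(\mu^2r_0^2-1)e^{2\widetilde{t}/r_0^2}\bigr)$, and then taking $t=\tfrac{1}{2\mu^2}\ln v$. Since Lemma \ref{LeCamTi} already guarantees that $\widetilde{t}$ is a diffeomorphism onto its image, this algebraic inversion automatically gives the correct $t(\widetilde{t})$; no further monotonicity verification is required. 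The only mildly delicate point is checking that the denominator $\mu^2r_0^2-(\mu^2r_0^2-1)e^{2\widetilde{t}/r_0^2}$ stays positive on $[0,\widetilde{T})$, which follows from $R(t)<\infty$ for $t<T$ together with the explicit formula for $R$. This is the one spot where a sign check is needed, but it is routine and does not constitute a genuine obstacle; the proof reduces essentially to a bookkeeping exercise built on the explicit integration done in the first step.
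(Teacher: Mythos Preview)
Your proof is correct and follows essentially the same approach as the paper: specialize (\ref{CamTi}) with $w(r)=r$, substitute the explicit $R(t)$, apply the substitution $u=e^{2\mu^2\tau}$ followed by partial fractions, and then invert the resulting logarithmic expression algebraically. The only difference is cosmetic---you make the sign checks and the appeal to Lemma~\ref{LeCamTi} explicit, whereas the paper leaves these implicit.
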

\begin{proof} From (\ref{CamTi}) the relation between times is given by
\begin{align*}
\widetilde{t}(t_1)&=\int_0^{t_1}\Big(\dfrac{w(r_0)}{w(R(t))}\Big)^2\:dt=\int_0^{t_1}\Big(\dfrac{r_0}{R(t)}\Big)^2\:dt
\\
&=r_0^2\int_0^{t_1} \dfrac{\mu^2}{1+(\mu^2 r_0^2-1)e^{2\mu^2 t} }\:dt
=r_0^2\int_1^{e^{2\mu^2 t_1}} \dfrac{\mu^2}{1+(\mu^2 r_0^2-1)x}\:\dfrac{1}{2\mu^2}\dfrac{dx}{x}
\\
&=\dfrac{r_0^2}{2}\int_1^{e^{2\mu^2 t_1}}\Big( \dfrac{1}{x}-\dfrac{(\mu^2 r_0^2-1)}{1+(\mu^2 r_0^2-1)x}\Big)\:dx
=\dfrac{r_0^2}{2}\Big( \ln(x)-\ln\big(1+(\mu^2 r_0^2-1)x\big)\Big\vert_1^{e^{2\mu^2 t_1}}
\\
&=\dfrac{r_0^2}{2} \ln\Big(\dfrac{x}{1+(\mu^2 r_0^2-1)x}\Big)\Big\vert_1^{e^{2\mu^2 t_1}}
\\
&=\dfrac{r_0^2}{2}\Big( \ln\Big(\dfrac{e^{2\mu^2 t_1}}{1+(\mu^2 r_0^2-1)e^{2\mu^2 t_1}}\Big)- \ln\Big(\dfrac{1}{1+\mu^2 r_0^2-1}\Big)\Big)
\\
&=\dfrac{r_0^2}{2} \ln\Big(\dfrac{\mu^2 r_0^2 e^{2\mu^2 t_1}}{1+(\mu^2 r_0^2-1)e^{2\mu^2 t_1}}\Big),
\end{align*}
where we have used the change of parameter $t=\dfrac{1}{2\mu^2}\ln x,\:dt=\dfrac{1}{2\mu^2}\dfrac{dx}{x}$. Now, we move on calculating the inverse function:
\begin{align*}
\left.
\begin{array}{ll}
\widetilde{t}=\dfrac{r_0^2}{2} \ln\Big(\dfrac{\mu^2 r_0^2 e^{2\mu^2 t}}{1+(\mu^2 r_0^2-1)e^{2\mu^2 t}}\Big), & e^{2\widetilde{t}/r_0^2}=\dfrac{\mu^2 r_0^2 e^{2\mu^2 t}}{1+(\mu^2 r_0^2-1)e^{2\mu^2 t}},  \\
(1+(\mu^2 r_0^2-1)e^{2\mu^2 t})e^{2\widetilde{t}/r_0^2}=\mu^2 r_0^2 e^{2\mu^2 t}, & e^{2\widetilde{t}/r_0^2}=\Big(\mu^2 r_0^2-(\mu^2 r_0^2-1)e^{2\widetilde{t}/r_0^2} \Big)e^{2\mu^2 t},\\
\dfrac{e^{2\widetilde{t}/r_0^2}}{\mu^2 r_0^2-(\mu^2 r_0^2-1)e^{2\widetilde{t}/r_0^2}}=e^{2\mu^2 t}, &  t=\dfrac{1}{2\mu^2}\ln\Big(\dfrac{e^{2\widetilde{t}/r_0^2}}{\mu^2 r_0^2-(\mu^2 r_0^2-1)e^{2\widetilde{t}/r_0^2}}\Big).
\end{array} \right.
\end{align*}
\end{proof}

Given $\gamma_0\in\mathcal{A}$ we shall denote by $\Omega_0$ to the region enclosed by the curve $\widetilde\gamma_0$ in the sphere $S_{r_0}$ such that $\dfrac{Area(\Omega_0)}{Area(S_{r_0})}\leq 1/2$, we also take the inward-pointing normal $\widetilde\nu$ to $\partial \Omega_0$.

\begin{theorem}
Let $\gamma_0\in\mathcal{A}$ and let  $\gamma:\mathbb{S}^1\times[0,\:T)\rightarrow \mathbb{R}^3$ be the unique maximal solution of the initial value problem (\ref{PVI}) with $\gamma_0$ as initial condition, then:
\begin{itemize}
\item[i)] If $r_0>\dfrac{1}{\mu}$:
\begin{itemize}
\item If $\dfrac{1}{2}\Big(1-\Big(\dfrac{\mu^2 r_0^2-1}{\mu^2 r_0^2}\Big)^{1/2}\Big)<\dfrac{Area(\Omega_0)}{Area(S_{r_0})}\leq 1/2$ the flow topologically converges to $\gamma_\infty:\mathbb{S}^1\rightarrow [0,\:\infty]\times\mathbb{S}^2,\:p\mapsto (\infty,\:\widetilde{\gamma}(p,\:\widetilde{T}))$.
\item  If 
$
\dfrac{Area(\Omega_0)}{Area(S_{r_0})}=\dfrac{1}{2}\Big(1-\Big(\dfrac{\mu^2 r_0^2-1}{\mu^2 r_0^2}\Big)^{1/2}\Big)
$
the flow topologically converges to a point $p_\infty\in\mathbb{S}_\infty\equiv \lbrace\infty\rbrace\times\mathbb{S}^2\subset [0,\:\infty]\times\mathbb{S}^2$ in the infinite radius sphere.
\item  If 
$
\dfrac{Area(\Omega_0)}{Area(S_{r_0})}<\dfrac{1}{2}\Big(1-\Big(\dfrac{\mu^2 r_0^2-1}{\mu^2 r_0^2}\Big)^{1/2}\Big)
$
the flow collapses to a spherical round point  in the Euclidean space $\mathbb{R}^3$.
\end{itemize}
\item[ii)] If $r_0=\dfrac{1}{\mu}$:
\begin{itemize}
\item If $\dfrac{Area(\Omega_0)}{Area(S_{r_0})}=1/2$ the flow $C^\infty$-subconverges, after a reparametrization of the curves $\gamma(\cdot,\:t)$, to a closed geodesic in $(S_{r_0},\:g_{r_0})$.
\item If $\dfrac{Area(\Omega_0)}{Area(S_{r_0})}<1/2$ the flow collapses to a round point in $(S_{r_0},\:g_{r_0})$.
\end{itemize} 
\item[iii)] If $r_0<\dfrac{1}{\mu}$:
\begin{itemize}
\item If $\dfrac{Area(\Omega_0)}{Area(S_{r_0})}=1/2$ the flow collapses to the coordinate origin in the Euclidean space $\mathbb{R}^3$. A blow-up centered at the origin coordinate gives a limit flow by the curve shortening problem in $(S_{r_0},\:g_{S_{r_0}})$ that $C^\infty$-subconverges, after a reparametrization of the curves, to a closed geodesic.
\item If $\dfrac{Area(\Omega_0)}{Area(S_{r_0})}<1/2$ the flow collapses to a spherical round point in $\mathbb{R}^3-\lbrace 0\rbrace$.
\end{itemize} 
\end{itemize}
\end{theorem}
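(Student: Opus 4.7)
The plan is to combine the explicit solution for $R(t)$, the time reparametrization from Proposition \ref{ProGaCamTi}, and the Gauss--Bonnet area evolution for the spherical flow $\widetilde\gamma$, and then compare the two natural terminal times $\widetilde T_{\max}$ and $\widetilde t_{\mathrm{col}}$. The density gives $B(r)=\tfrac{1}{r}-\mu^{2}r$, so the unique $B$-minimal sphere has radius $1/\mu$; the sign of $R'(t)=-B(R(t))$ produces the trichotomy in the statement. From the explicit $R(t)$ I record that for $r_0>1/\mu$ the function $R$ is defined for all $t\geq 0$ and $R(t)\to\infty$; for $r_0=1/\mu$ we have $R\equiv r_0$; and for $r_0<1/\mu$ the function $R$ is strictly decreasing and reaches zero at $T^{\star}=\tfrac{1}{2\mu^{2}}\ln\tfrac{1}{1-\mu^{2}r_0^{2}}$.

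Since $\psi\equiv 0$, the area $A(\widetilde t)$ enclosed by $\widetilde\gamma(\cdot,\widetilde t)$ in $\Omega_{\widetilde t}\subset S_{r_0}$ satisfies $A'(\widetilde t)=\tfrac{A(\widetilde t)}{r_0^{2}}-2\pi$ by Gauss--Bonnet, so $A(\widetilde t)=2\pi r_0^{2}+(A_0-2\pi r_0^{2})\,e^{\widetilde t/r_0^{2}}$. Writing $\alpha:=\mathrm{Area}(\Omega_0)/\mathrm{Area}(S_{r_0})$, if $\alpha=1/2$ then $A$ is constant and by Theorem \ref{TViMi} the spherical flow subconverges to a closed geodesic; if $\alpha<1/2$ then $A$ vanishes at the explicit value $\widetilde t_{\mathrm{col}}=r_0^{2}\ln\tfrac{1}{1-2\alpha}$ and by Theorem \ref{Zhu} the spherical flow collapses to a round point.

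The remaining step is to compare $\widetilde t_{\mathrm{col}}$ with the supremum $\widetilde T_{\max}:=\lim_{t\to T}\widetilde t(t)$ coming from Proposition \ref{ProGaCamTi}. For $r_0>1/\mu$ one finds $\widetilde T_{\max}=\tfrac{r_0^{2}}{2}\ln\tfrac{\mu^{2}r_0^{2}}{\mu^{2}r_0^{2}-1}<\infty$, so the ambient maximal time is infinite exactly when $\widetilde t_{\mathrm{col}}\geq \widetilde T_{\max}$; elementary algebra reduces this to the threshold $\alpha\geq \tfrac{1}{2}\bigl(1-\sqrt{(\mu^{2}r_0^{2}-1)/(\mu^{2}r_0^{2})}\,\bigr)$, and the three sub-cases of case i) follow from Theorem \ref{TInComAsin} together with the above description of $\widetilde\gamma$. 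Case ii) is immediate because $\gamma=\widetilde\gamma$ lies on the fixed sphere $S_{1/\mu}$. For case iii), Proposition \ref{ProGaCamTi} gives $\widetilde t(t)\to\infty$ as $t\to T^{\star}$; if $\alpha=1/2$ then $\widetilde t_{\mathrm{col}}=\infty$, so $T=T^{\star}$ and $R(T)=0$, and the rescaling argument of Theorem \ref{TeTieFinSin}~ii) together with Corollary \ref{CEquiFEsRota} recovers the limit geodesic; if $\alpha<1/2$ then $\widetilde t_{\mathrm{col}}<\infty$ and the flow collapses at some $T<T^{\star}$ with $R(T)>0$, giving a round point in $\mathbb{R}^{3}\setminus\{0\}$.

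The delicate step is the borderline equality $\alpha=\tfrac{1}{2}\bigl(1-\sqrt{(\mu^{2}r_0^{2}-1)/(\mu^{2}r_0^{2})}\,\bigr)$ in case i), where $\widetilde T_{\max}=\widetilde t_{\mathrm{col}}$: the spherical flow collapses at the very instant $t\to\infty$ and $R(t)\to\infty$. I would invoke Zhu's roundness result for $\widetilde\gamma$ (Theorem \ref{Zhu}) together with the explicit formula (\ref{SoWar}) to show that in the compactification $[0,\infty]\times\mathbb{S}^{2}$ the curves $\gamma(\cdot,t)$ actually converge to a single point $p_{\infty}$ of the sphere at infinity, rather than degenerating to a non-trivial spherical curve.
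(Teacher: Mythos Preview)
Your argument is correct and follows essentially the same route as the paper: solve the radial ODE, use Gauss--Bonnet to compute the enclosed-area evolution and the spherical extinction time, read off $\widetilde T=\lim_{t\to T}\widetilde t(t)$ from Proposition~\ref{ProGaCamTi}, and compare the two. The only point to tidy up is the citation in case~i): the first two sub-cases indeed fall under Theorem~\ref{TInComAsin}~ii.b), but the third sub-case (collapse to a spherical round point) has $T<\infty$, so it is Theorem~\ref{TeTieFinSin}~i) (equivalently Theorem~A~i)) that supplies the conclusion, not Theorem~\ref{TInComAsin}.
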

\begin{proof}
\begin{itemize}
\item[]
\item[Case i)]
We note that the problem (\ref{PVIesfera}) is the curve shortening problem. It is know that we could calculate the maximal time of the flow $\widetilde{\gamma}$ from the variation formula of enclosed area by the curve $\widetilde\gamma(\cdot,\:\widetilde{t})$.  This formula is given by
\begin{align*}
\dfrac{\partial}{\partial \widetilde t}A(\Omega_{\widetilde{t}})&=-\int_{\mathbb{S}^1}k_{\wgamma,\:S_{r_0}}\: d\widetilde{s}=-2\pi+\int_{\Omega_{\widetilde{t}}}K_{S_{r_0}}\:da_{S_{r_0}}
\\
&=-2\pi+\dfrac{1}{r_0^2}A(\Omega_{\widetilde{t}}),
\end{align*}
where the first equality is know and we have used the Gauss-Bonnet theorem in the second equality, so:
\begin{align*}
\left.
\begin{array}{ll}
\dfrac{\partial}{\partial \widetilde t}A(\Omega_{\widetilde{t}})=-2\pi+\dfrac{1}{r_0^2}A(\Omega_{\widetilde{t}}), & \dfrac{\dfrac{\partial}{\partial \widetilde t}A(\Omega_{\widetilde{t}})}{-2\pi r_0^2+A(\Omega_{\widetilde{t}})}=\dfrac{1}{r_0^2}, \\
\dfrac{\partial}{\partial \widetilde t} \ln\big\vert -2\pi r_0^2+A(\Omega_{\widetilde{t}})\big\vert=\dfrac{1}{r_0^2},& \ln\big\vert \dfrac{-2\pi r_0^2+A(\Omega_{\widetilde{t}})}{-2\pi r_0^2+A(\Omega_{0})}\big\vert=\dfrac{1}{r_0^2}\widetilde{t},\\
-2\pi r_0^2+A(\Omega_{\widetilde{t}})=(-2\pi r_0^2+A(\Omega_{0}))e^{\widetilde{t}/r_0^2}, &  
\end{array} \right.
\end{align*}
and we obtain the following expression for the enclosed area:
\begin{align}
A(\Omega_{\widetilde{t}})=2\pi r_0^2-(2\pi r_0^2-A(\Omega_{0}))e^{\widetilde{t}/r_0^2}.
\end{align}
So, if $A(\Omega_0)=2\pi r_0^2$ the maximal time of the solution $\widetilde{\gamma}$ is infinite, let us remind that if the maximal time is finite, the curve collapses to a point and, if $A(\Omega_0)<2\pi r_0^2$, the maximal time is finite and it is given by 
\begin{align}
\widetilde{T}_{\max}= r_0^2\ln\Big(\dfrac{2\pi r_0^2}{2\pi r_0^2-A(\Omega_{0})} \Big).
\end{align}

From the hypothesis $r_0>1/\mu$ and Proposition \ref{ProGaCamTi} we obtain that the function $\widetilde{t}(t)$ is defined on $[0,\:\infty)$ and that $\lim_{t\rightarrow\infty}\widetilde{t}(t)= \dfrac{r_0^2}{2}\ln\Big(\dfrac{\mu^2 r_0^2}{\mu^2 r_0^2-1}\Big)$. Then
\begin{itemize}
\item If $\widetilde{T}_{\max}< \dfrac{r_0^2}{2}\ln\Big(\dfrac{\mu^2 r_0^2}{\mu^2 r_0^2-1}\Big)$, the flow $\gamma$ collapses to a point in the Euclidean space $\mathbb{R}^3$, because there is $t^\star \in [0,\:\infty)$ such that $\widetilde{t}(t^\star)=\widetilde{T}_{\max}$. Then, by the relation between the flows, $t^\star=T_{\max}<\infty$.
\item  If $\widetilde{T}_{\max}=\dfrac{r_0^2}{2}\ln\Big(\dfrac{\mu^2 r_0^2}{\mu^2 r_0^2-1}\Big)$, the flow $\gamma$ topologically converges to a point $p_\infty$ in $\mathbb{S}_\infty$ the infinite radius sphere.
\item If $\widetilde{T}_{\max}>\dfrac{r_0^2}{2}\ln\Big(\dfrac{\mu^2 r_0^2}{\mu^2 r_0^2-1}\Big)$, the flow topologically converges to $\gamma_\infty:\mathbb{S}^1\rightarrow [0,\:\infty]\times\mathbb{S}^2,\:p\mapsto (\infty,\:\widetilde{\gamma}(p,\:\widetilde{T}))$, curve contained in $\mathbb{S}_\infty$ the infinite radius sphere.
\end{itemize}
We can translate these inequalities in the following sense:
\begin{align*}
\lim_{t\rightarrow\infty}\widetilde{t}(t)&<\:(=)\:(>)\:\widetilde{T}_{\max}
\\
\dfrac{r_0^2}{2}\ln\Big(\dfrac{\mu^2 r_0^2}{\mu^2 r_0^2-1}\Big)&<\:(=)\:(>)\:
r_0^2\ln\Big(\dfrac{2\pi r_0^2}{2\pi r_0^2-A(\Omega_{0})} \Big),
\\
\Big(\dfrac{\mu^2 r_0^2}{\mu^2 r_0^2-1}\Big)^{1/2}&<\:(=)\:(>)\:
\dfrac{2\pi r_0^2}{2\pi r_0^2-A(\Omega_{0})},
\\
2\pi r_0^2-A(\Omega_{0})&<\:(=)\:(>)\:
\dfrac{2\pi r_0^2}{\Big(\dfrac{\mu^2 r_0^2}{\mu^2 r_0^2-1}\Big)^{1/2}},
\\
A(\Omega_{0})&>\:(=)\:(<)\:
2\pi r_0^2-\dfrac{2\pi r_0^2}{\Big(\dfrac{\mu^2 r_0^2}{\mu^2 r_0^2-1}\Big)^{1/2}},
\\
A(\Omega_{0})&>\:(=)\:(<)\:
2\pi r_0^2\Big(1-\Big(\dfrac{\mu^2 r_0^2-1}{\mu^2 r_0^2}\Big)^{1/2}\Big),
\\
\dfrac{A(\Omega_{0})}{A(S_{r_0})}&>\:(=)\:(<)\:
\dfrac{1}{2}\Big(1-\Big(\dfrac{\mu^2 r_0^2-1}{\mu^2 r_0^2}\Big)^{1/2}\Big).
\end{align*}
At this point, if we write the previous classification in these terms, we obtain the statement of the theorem.

\item[Case ii)] In this situation, $R(t)$ is constant for all t then, this case is the classic curve shortening problem \cite{GaHa86,Ga84, Gra89,Ga90b}.

\item[Case iii)] We note that if $r_0<\dfrac{1}{\mu}$ then, $T$ is the maximal time of the solution and it is less than or equal to $\dfrac{1}{2\mu^2}\ln\Big(\dfrac{1}{1-\mu^2 r_0^2}\Big)$. This time is the first time such that $R(\dfrac{1}{2\mu^2}\ln\Big(\dfrac{1}{1-\mu^2 r_0^2}\Big))=0$, so the maximal time is finite. Then from Theorem \ref{TeTieFinCo}, the curve collapses to a point; moreover, from Theorem \ref{TeTieFinSin} we know how the singularities are. We also notice that in this situation:
\begin{align*} 
\text{the\:flow\:collapses\:to\:the\:pole\:o}\Leftrightarrow\:T=\dfrac{1}{2\mu^2}\ln\Big(\dfrac{1}{1-\mu^2 r_0^2}\Big)\Leftrightarrow\:\widetilde{T}=\infty,
\end{align*}
and by the variation formula we obtain that
\begin{align*}
\widetilde{T}=\infty\Leftrightarrow\:\dfrac{Area(\Omega_0)}{Area(S_{r_0})}=1/2
\end{align*}
\end{itemize}
\end{proof}

\section*{Acknowledgements}

I thank to Dr. Vicent Gimeno and Dr. Vicente Palmer for their support and useful comments throughout our fruitful seminars that encouraged the writing of the present manuscript.

\bibliographystyle{acm}
\bibliography{Bibliografia}

\begin{thebibliography}{10}

\bibitem{An90}
{\sc Angenent, S.}
\newblock Parabolic equations for curves on surfaces. i. curves with
  p-integrable curvature.
\newblock {\em Annals of Mathematics (2) 132}, 3 (1990), 451--483.

\bibitem{An91}
{\sc Angenent, S.}
\newblock Parabolic equations for curves on surfaces. ii. intersections,
  blow-up and generalized solutions.
\newblock {\em Annals of Mathematics (2) 133}, 1 (January 1991), 171--215.

\bibitem{BoMi10}
{\sc Borisenko, A., and Miquel, V.}
\newblock Gaussian mean curvature flow.
\newblock {\em Journal of Evolution Equations 10\/} (2010), 415--423.

\bibitem{CasLerMi18}
{\sc Castro, I., Lerma, A.~M., and Miquel, V.}
\newblock Evolution by mean curvature flow of lagrangian spherical surfaces in
  complex euclidean plane.
\newblock {\em Journal of Mathematical Analysis and Applications 462\/} (2018),
  637–647.

\bibitem{CheJi01}
{\sc Chen, J., and Li, J.}
\newblock Mean curvature flow of surface in 4-manifolds.
\newblock {\em Advances in Mathematics 163}, 2 (November 2001), 287--309.

\bibitem{ChoZhu01}
{\sc Chou, K.-S., and Zhu, X.-P.}
\newblock {\em The Curve Shortening Problem}.
\newblock Chapman and Hall/CRC, Boca Raton, FL, March 2001.

\bibitem{GaHa86}
{\sc Gage, M., and Hamilton, R.}
\newblock The heat equation shrinking convex plane curves.
\newblock {\em Journal Differential Geometry 23\/} (1986), 69--96.

\bibitem{Ga84}
{\sc Gage, M.~E.}
\newblock Curve shortening makes convex curves circular.
\newblock {\em Inventiones mathematicae 76\/} (1984), 357--364.

\bibitem{Ga90b}
{\sc Gage, M.~E.}
\newblock Curve shortening on surfaces.
\newblock {\em Annales Scientifiques de l'{\'E}cole Normale Sup{\'e}rieure 23},
  2 (1990), 229--256.

\bibitem{Gra89}
{\sc Grayson, M.}
\newblock Shortening embedded curves.
\newblock {\em Annals of Mathematics 129\/} (1989), 71--111.

\bibitem{Gri99}
{\sc Grigor'yan, A.}
\newblock Analytic and geometric background of recurrence and non-explosion of
  the brownian motion on riemannian manifolds.
\newblock {\em Bull. Amer. Math. Soc. (N.S.) 36\/} (1999), 135--249.

\bibitem{Ha82}
{\sc Hamilton, R.}
\newblock Three-manifolds with positive ricci curvature.
\newblock {\em Journal Differential Geometry 17}, 2 (1982), 255--306.

\bibitem{Hu90}
{\sc Huisken, G.}
\newblock Asymptotic behavior for singularities of the mean curvature flow.
\newblock {\em Journal Differential Geometry 31}, 1 (1990), 285 -- 299.

\bibitem{LiDez07}
{\sc Ma, L., and Chen, D.}
\newblock {\em Curve shortening in a Riemannian manifold}, vol.~186.
\newblock Springer-Verlag, October 2007.

\bibitem{MiVi16}
{\sc Miquel, V., and Vi{\~n}ado-Lereu, F.}
\newblock The curve shortening problem associated to a density.
\newblock {\em Calculus of Variations and Partial Differential Equations
  55:61}, 3 (June 2016), 1 -- 30.

\bibitem{MiVi18}
{\sc Miquel, V., and Vi{\~n}ado-Lereu, F.}
\newblock Type i singularities in the curve shortening flow associated to a
  density.
\newblock {\em The Journal of Geometric Analysis 28}, 3 (July 2018),
  2361–2394.

\bibitem{Oak94}
{\sc Oaks, J.~A.}
\newblock Singularities and self intersections of curve evolving on surfaces.
\newblock {\em Indiana University Mathematics Journal 43}, 3 (1994), 959--981.

\bibitem{On83}
{\sc O'Neill, B.}
\newblock {\em Semi-Riemannian Geometry with Applications to Relativity}.
\newblock 1983.

\bibitem{ScSm02}
{\sc Schn{\"u}rer, O.~C., and Smoczyk, K.}
\newblock Evolution of hypersurfaces in central force fields.
\newblock {\em Journal f{\"u}r die reine und angewandte Mathematik 550\/}
  (2002), 77--95.

\bibitem{Sm96}
{\sc Smoczyk, K.}
\newblock Symmetric hypersurfaces in riemannian manifolds contracting to
  lie-groups by their mean curvature.
\newblock {\em Calculus of Variations and Partial Differential Equations 4}, 2
  (February 1996), 155 -- 170.

\bibitem{Sm12}
{\sc Smoczyk, K.}
\newblock Mean curvature flow in higher codimension: Introduction and survey.
\newblock In {\em Global Differential Geometry.}, M.~S. Christian~Bär,
  Joachim~Lohkamp, Ed., vol.~17 of {\em Springer Proceedings in Mathematics}.
  Springer, Berlin, Heidelberg, 2012, pp.~231--274.

\bibitem{Wang01}
{\sc Wang, M.-T.}
\newblock Mean curvature flow of surfaces in einstein four-manifolds.
\newblock {\em Journal of Differential Geometry 57}, 2 (2001), 301--338.

\bibitem{Zho17}
{\sc Zhou, H.}
\newblock Curve shortening flows in warped product manifolds.
\newblock {\em Proceedings of the American Mathematical Society 145}, 10
  (October 2017), 4503–4516.

\bibitem{Zhu98}
{\sc Zhu, X.-P.}
\newblock Asymptotic behavior of anisotropic curve flows.
\newblock {\em Journal Differential Geometry 48\/} (1998), 225--274.

\end{thebibliography}
%Carga la bibliografia

\noindent
Department of Mathematics, School of Technology and Experimental Sciences, Universitat Jaume I, Av. de Vicente Sos Baynat, s/n, E-12071 Castelló de la Plana, Spain\\
{fvinado@uji.es}
\end{document}